\documentclass[12pt]{article}

\usepackage[english]{babel}

\usepackage[letterpaper,top=2cm,bottom=2cm,left=3cm,right=3cm,marginparwidth=1.75cm]{geometry}

\usepackage{amsmath, amssymb,amsthm}
\usepackage{graphicx}
\usepackage[colorlinks=true, allcolors=blue]{hyperref}
\usepackage{color}

\theoremstyle{definition}

\theoremstyle{plain}
\newtheorem{example}{Example}[section]
\newtheorem{theorem}{Theorem}[section]
\newtheorem{definition}[theorem]{Definition}
\newtheorem{proposition}[theorem]{Proposition}
\newtheorem{lemma}[theorem]{Lemma}
\newtheorem{corollary}[theorem]{Corollary}
\newtheorem{remark}{Remark}[section]
\newtheorem{assumption}{Assumption}[section]

\definecolor{darkyellow}{rgb}{0.7,0.7,0.1}

\definecolor{darkgreen}{rgb}{0.1,0.7,0.1}

\definecolor{darkred}{rgb}{0.7,0.1,0.1}

\definecolor{darkblue}{rgb}{0.1,0.1,0.7}

\DeclareMathOperator{\Hess}{Hess}

\DeclareMathOperator{\Var}{Var}
\DeclareMathOperator{\tv}{\textsc{tv}}
\DeclareMathOperator{\tmix}{{\rm t}_\textsc{mix}}
\DeclareMathOperator{\wmix}{{\rm w}_\textsc{mix}}

\DeclareMathOperator{\Ent}{Ent}
\DeclareMathOperator{\Varent}{Varent}

\newcommand{\R}{\mathbb{R}}
\newcommand{\N}{\mathbb{N}}

\newcommand{\E}{\mathbb{E}}
\renewcommand{\P}{\mathbb{P}}

\newcommand{\cA}{\mathcal{A}}
\newcommand{\cB}{\mathcal{B}}
\newcommand{\cP}{\mathcal{P}}
\newcommand{\cE}{\mathcal{E}}

\newcommand{\cT}{\mathcal{T}}

\newcommand{\TV}{\operatorname{TV}}
\newcommand{\Fix}{\operatorname{Fix}}
\newcommand{\He}{\operatorname{Hell}}

\DeclareMathOperator{\cL}{\mathcal{L}}

\newcommand{\Lip}{\operatorname{Lip}}

\title{Cutoff beyond non-negative curvature}
\author{Max Fathi\thanks{Université Paris Cité and Sorbonne Université, CNRS, Laboratoire Jacques-Louis Lions and Laboratoire de Probabilités, Statistique et Modélisation, F-75013 Paris, France and Institut Universitaire de France (IUF), \texttt{fathi@lpsm.paris}}, Arnaud Guillin\thanks{Laboratoire de Mathématiques Blaise Pascal (UMR 6620 CNRS), Université Clermont Auvergne, 3, Place Vasarely, Aubière, 63178, France, \texttt{arnaud.guillin@uca.fr}}, Cyril Labbé\thanks{Université Paris Cité and Sorbonne Université, CNRS, Laboratoire de Probabilités, Statistique et Modélisation, F-75013 Paris, France and Institut Universitaire de France (IUF), \texttt{clabbe@lpsm.paris}} and Justin Salez\thanks{CEREMADE, CNRS, Université Paris-Dauphine, PSL Research University, Place de Lattre de Tassigny, 75016 Paris, France and Institut Universitaire de France (IUF), \texttt{justin.salez@dauphine.psl.eu}}}

\begin{document}
\maketitle

\begin{abstract}
We study how a relaxed gradient estimate is enough to prove cutoff for several new families of Markov processes, within the varentropy framework of \cite{Sal25}. In particular, we show how certain regularization estimates from hypocoercivity theory allow one to prove cutoff for kinetic dynamics. Moreover, we prove cutoff for some families of diffusions with \emph{non-convex} potentials and for mean-field dynamics, as well as for biased random walks on the symmetric group, giving a simple proof of a far-reaching extension of a result of \cite{NeYa24}. 
\end{abstract}

\setcounter{tocdepth}{2}
\tableofcontents

\section{Introduction}

The cutoff phenomenon is a remarkable abrupt transition from out-of-equilibrium to equilibrium displayed by certain families of Markov processes.  Discovered in the early eighties in the context of card shuffling~\cite{Ald83,AldDia86}, it has since then been established in a variety of models, on both discrete and continuous state spaces. Classical proofs have long relied on delicate and model-specific computations, which typically required very precise information on the model at stake. Very recently, a more systematic  approach has emerged, based on an information-theoretic notion called \emph{varentropy}~\cite{Sal24,Sal25,PeSa25}. In order to successfully   predict cutoff,  this theory currently requires that the considered process have \emph{non-negative  Bakry-Emery curvature}, i.e.
\begin{equation}\label{Eq:NonNegCurv}
 \forall t\ge 0,\qquad	\Gamma P_t  \ \le\ P_t \Gamma,
\end{equation}
in the sense of quadratic forms, where $(P_t)_{t\ge 0}$ and $\Gamma$ denote  the semigroup and carré du champ operator, respectively. This condition  is well known to hold in many models, both in discrete and continuous spaces. For example, a Markov diffusion of the form
$$dX_t \ =\ -\nabla U(X_t)dt + dB_t\;,$$
is non-negatively curved if and only if the potential $U:\R^d\to\R$ is convex \cite{BGL14}. Similarly, a random walk on a  finite group $E$ is non-negatively curved as soon as its jump distribution $\nu\in\cP(E)$ is \emph{conjugacy-invariant}, in the sense that
\begin{equation}\label{Eq:ConjugInv}
 \forall x,y\in E,\qquad	\nu(xyx^{-1}) \ =\ \nu(y).
\end{equation}
An alternative approach was proposed in \cite{PeSa25b}, replacing varentropy estimates by transport cost estimates via Poincaré inequalities, still using non-negative curvature as a key tool. Recently, some other works \cite{Ped26,PedSal26} discuss other ways of relaxing the curvature assumption, and we shall discuss how they compare to our work below. 

The starting point of the present work is the observation that Assumption \eqref{Eq:NonNegCurv} fails in a number of interesting situations where one  still expects a cutoff, including:
\begin{enumerate}
    \item[(A)] elliptic diffusions in \emph{non-convex} potentials;
    \item[(B)] \emph{biased} random walks on groups;
    \item[(C)]  \emph{hypoelliptic} diffusions, notably kinetic diffusions.
\end{enumerate}Our goal here is to identify a suitable relaxation of Assumption \eqref{Eq:NonNegCurv}  which is simultaneously strong enough for the varentropy approach to carry over, and weak enough to  encompass a number of interesting examples from the classes (A), (B) and (C). \\ 

Specifically, we propose the following natural relaxation of   non-negative curvature, where a uniform pre-factor $M<\infty$ is introduced in the sub-commutation relation between the semi-group and carré du champ operator:
\begin{equation}\label{Eq:CurvatureC}
 \forall t\ge 0 \;,\quad	\Gamma P_t \ \le\ M P_t \Gamma \;.
\end{equation}
It turns out that  the theory presented in ~\cite{Sal25,PeSa25} extends almost verbatim  under  this  relaxed assumption. Moreover, despite the lack of an infinitesimal description (analogous to the  celebrated  $\Gamma_2\ge 0$ criterion  that so conveniently characterizes  non-negative  curvature), we will see that the optimal sub-commutation constant $M$ in  \eqref{Eq:CurvatureC}  can be effectively bounded using coupling techniques.   As a consequence, we deduce cutoff for a large class of elliptic diffusions in non-convex potentials (see Section \ref{sec:NonConvex}), and for a large class of  biased random walks on groups (see Section  \ref{sec:Groups}). The case of hypoelliptic diffusions (Section \ref{Sect_presentation_hypoelliptic}) is more subtle, and will require some additional regularization estimates.\\

The recent work \cite{PedSal26} by one of us and Pedrotti also tackles the problem of relaxing the non-negative curvature assumption, by replacing the relaxation time in the product condition by a non-equiibrium relaxation time, defined as the inverse \emph{local} Poincar\'e constant of the dynamics, uniformly in time and in the initial data. As mentioned in \cite[Remark 1]{PedSal26}, one way of controlling this non-equilibrium relaxation time is precisely to use generalized gradient estimates. While our results on elliptic diffusions could be deduced directly from \cite[Theorem 1]{PedSal26}, those concerning hypoelliptic processes do not fit in the framework of \cite{PedSal26}, as the main assumption therein fails when the Carr\'e du Champ is not coercive. 

Our relaxation \eqref{Eq:CurvatureC} is rich enough to  encompass certain instances of hypoelliptic diffusions. For example, it was shown that the hypoelliptic diffusion on the Heisenberg group~\cite{DriMel,BBBC}, on SU(2)~\cite{BB} and on nilpotent Lie groups~\cite{Melcher} all satisfy \eqref{Eq:CurvatureC} for some constant $M\in(1,\infty)$. Notice, however, that the lack of ellipticity in those three examples is highly localized in space: for instance, the diffusion on the Heisenberg group only fails to be elliptic along the line $\{(x,y,z)\in\R^3: x=y=0\}$. Unfortunately, this  is not a coincidence: the condition \eqref{Eq:CurvatureC} is bound to fail as soon as the hypoellipticity is \emph{generic}, as explained in the following remark.
\begin{remark}[No relaxed curvature for hypoelliptic diffusions]\label{Rk:HypoCurv}
	If $\Gamma g=0$ then \eqref{Eq:CurvatureC} implies that $\Var g(X_t)=0$  for all $t\ge 0$, and since  our process converges in law to its equilibrium measure, this forces $g$ to be constant $\mu$-almost everywhere. As a consequence, relaxed curvature is incompatible with the existence of a non-constant function $g$ satisfying $\Gamma g = 0$. In particular, hypoelliptic diffusions can not satisfy \eqref{Eq:CurvatureC}  unless the degeneracy of $\Gamma$ is localised on a $\mu$-neglibile set. Section \ref{Sect_presentation_hypoelliptic} will adress cutoff for certain processes that do not satisfy \eqref{Eq:CurvatureC}.
\end{remark}

In order to circumvent the above limitation, we work under a variant of \eqref{Eq:CurvatureC} which is specifically taylored to Markov diffusions, where the carré du champ operator $\Gamma$ is replaced with the squared norm of the gradient  in $\R^d$:
\begin{equation}\label{Eq:CurvatureNabla}
\forall t\ge 0,\;\quad	| \nabla P_t f|^2 \ \le\ M P_t|\nabla f|^2\;.
\end{equation}
Note that for elliptic diffusions, we have $\Gamma f=|\nabla f|^2$, and we are back to~\eqref{Eq:CurvatureC}. For hypoelliptic diffusions however, the two conditions differ genuinely. It turns out that nonetheless suitable gradient estimates have been established in the context of \emph{hypocoercivity}, which in combination with certain regularization estimates, will allow us to also cover certain hypoelliptic models. We will adapt the theory presented in~\cite{Sal25} to this new setting, see Theorem \ref{abstract_thm_cutoff_hypoelliptic}. 
As an application, we obtain the first proof of cutoff for kinetic mean-field diffusions (for nice enough interactions and confinement potentials). We will focus on kinetic diffusions in $\R^d$, but the approach should extend to the setting of sub-Riemannian manifolds.  

Another approach to extending the approach of \cite{Sal25} beyond non-negative curvature was proposed in \cite{Ped26}, relying on the Wasserstein distance contraction formulation of (coarse) curvature, but allowing for a small \emph{additive} error term (rather than the multiplicative, but not necessarily small, relaxation we consider here). We believe the approach we propose here fits better with various recent developments on hypocoercivity. In particular, it is not clear that the approach in \cite{Ped26} applies to kinetic diffusions. 

The sequel of this article is as follows: Section 2 presents the varentropy approach to cutoff, highlighting the elements we will modify here, as well as the statements of our new results. Section 3 contains the proofs, as well as applications to cutoff for mean-field dynamics and a discussion on an alternative approach to cutoff for diffusion processes via regularity estimates and transport maps. 

\paragraph{Acknowledgments.} The work of MF, CL and JS is supported by the Institut Universitaire de France (IUF). CL acknowledges financial support from ANR projects Smooth ANR-22-CE40-0017 and RANDOP ANR-24-CE40-3377. MF, AG and JS were supported by ANR project CONVIVIALITY ANR-23-CE40-0003. JS acknowledges support from the ERC grant CUTOFF (101123174). AG has benefited from a government grant managed by the Agence Nationale de la Recherche under the France 2030 investment plan ANR-23-EXMA-0001. 

\paragraph{Decalartion on AI use.} AI was not used here: the ideas, the proofs and the writing are ours. 

\section{The varentropy approach to cutoff}\label{Sec:Varentropy}

Throughout the paper, we consider a continuous-time Markov process $(X_t)_{t\ge 0}$ taking values in a Polish space $E$ and approaching an invariant law  $\pi\in\cP(E)$ as $t\to\infty$. Its \emph{mixing time}  is defined as 
$$ \tmix(\varepsilon) \ :=\ \inf\{t\ge 0: \tv(X_t) \le \varepsilon \}\;,$$
where the parameter $\varepsilon\in(0,1)$ specifies the desired precision, and where  $\tv(Z) := \sup_{A\in\cB(E)} |\P(Z \in A) - \pi(A)|$ denotes the total-variation distance to equilibrium. Note that this definition is more flexible than the standard worst-case one, since the initial condition $X_0$ is here left as a tunable parameter of the model. This definition captures, in a very precise way, \emph{how long} the process  $(X_t)_{t\ge 0}$ needs to be run in order to approach equilibrium. Over the past decades, a rich variety of tools and methods have been developed to obtain  bounds on mixing times; see \cite{LPW,SalSF} for an introduction. 
A related, but more delicate question is to understand \emph{how abrupt} the transition from out of equilibrium to equilibrium is, i.e., to estimate the \emph{mixing width}
$$ \wmix(\epsilon)\ :=\ \tmix(\epsilon) - \tmix(1-\epsilon)\;,\quad \epsilon \in (0,1/2)\;.\\$$

In many probabilistic models, the Markov process under consideration depends on some natural parameter $n\in\N$, and one is interested in the regime where $n\to\infty$. In this asymptotic setup, a \emph{cutoff phenomenon} is said to occur when
\begin{eqnarray}
\label{Eq:Cutoff}
\forall\epsilon\in(0,1/2),\quad \frac{{\rm w}_{\textsc{mix}}^{(n)}(\varepsilon)}{{\rm t}_{\textsc{mix}}^{(n)}(\varepsilon)} & \xrightarrow[n\to\infty]{} & 0.
\end{eqnarray}
In concrete words, this means that instead of decreasing smoothly from $1$ to $0$, the distance to equilibrium approaches a step function as $n\to\infty$.\\

The  approach developed in~\cite{Sal24,Sal25,PeSa25} aims at producing  effective criteria for the occurrence of the cutoff phenomenon \eqref{Eq:Cutoff}, by providing systematic bounds on the mixing width of Markov processes. This is achieved by establishing an appropriate \emph{varentropy estimate}, as summarized in Theorem \ref{Th:Varentropy} below.  As usual, we write $(P_t)_{t\ge 0}$ for the semi-group, $\cL$ for its generator, and $\cE(\cdot,\cdot)$ for the the associated Dirichlet form:
\begin{eqnarray*}
\cE(f,g) & := & -\int\left(f\cL g\right){\rm d}\pi.
\end{eqnarray*} We define the variance, entropy, and varentropy of a probability density  $f$ on $(E,\pi)$ as
\begin{eqnarray*}
\Var(f) & := & \int (f-1)^2\, {\rm d}\pi\;,\\
\Ent(f) & := &  \int f\log f\,{\rm d}\pi\;,\\
\Varent(f) & := &  \int f(\log f)^2\, {\rm d}\pi-\left(\int f\log f\,{\rm d}\pi\right)^2\;.
\end{eqnarray*}
We also recall that the Poincaré constant $\lambda\ge 0$ and the modified log-Sobolev constant $\alpha\ge 0$ are the optimal constants in the respective functional inequalities
\begin{eqnarray}
\label{Eq:Poincaré}
 \lambda \Var(f) & \le & \cE(f,f) \\
 \label{Eq:mLSI}\alpha \Ent(f) & \le & \cE(f,\log f).
\end{eqnarray}
for all density functions $f$ on $(E,\pi)$ such that this makes sense. 
\begin{theorem}[The varentropy approach to cutoff, see {\cite[Lemma 5.4]{SalSF}}]\label{Th:Varentropy}
Suppose that for each $t>0$, $X_t$ admits a density $f_t$ w.r.t. $\pi$, and that the latter satisfies an estimate of the form
\begin{eqnarray}
\label{Eq:IDI}
\forall t>0,\qquad \Varent(f_t) & \le & \psi(t)\, \cE(f_t,\log f_t),
\end{eqnarray}
for some function $\psi\colon(0,\infty)\to(0,\infty)$. Then,  the mixing width satisfies
\begin{eqnarray}
\label{Eq:WmixLambdaAlpha}
\wmix(\varepsilon) \ \le \ \frac{1}{\lambda\varepsilon^3}+\sqrt{\frac{4m_\varepsilon}{\lambda\varepsilon^3}}\;,& \textrm{ and } &
\wmix(\varepsilon) \ \le \ \frac{1}{\alpha}\log\left(\frac{e\alpha(1+m_\varepsilon)}{2\varepsilon^4}\right)\;,
\end{eqnarray}
where $m_\varepsilon$ is the supremum of $\psi$ in the interval $[\tmix(1-\varepsilon),\tmix(\varepsilon)]$. 
\end{theorem}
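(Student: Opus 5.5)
The plan is to convert the two total-variation thresholds defining $t_0:=\tmix(1-\varepsilon)$ and $t_1:=\tmix(\varepsilon)$ into bounds on the entropy $H_t:=\Ent(f_t)$. I would then integrate the identity $\frac{d}{dt}H_t=-\cE(f_t,\log f_t)$ over $[t_0,t_1]$, using \eqref{Eq:IDI} together with \eqref{Eq:mLSI} (resp.\ \eqref{Eq:Poincaré}) to show that $H_t$ decays fast enough to force $t_1-t_0$ to be small. Throughout, write $V_t:=\Varent(f_t)$ and note that under $X_t$ the random variable $\log f_t(X_t)$ has mean $H_t$ and variance $V_t$.

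\emph{Step 1: two elementary TV--entropy comparisons.} For a density $f$ and $h\in\R$, I would use two bounds. The first is the lower bound $\tv\ge \P(\log f(X)>h)-\pi(f>e^h)\ge \P(\log f(X)>h)-e^{-h}$. The second is the upper bound $\tv=\E[(1-1/f(X))_+]\le \P(\log f(X)>h)+(1-e^{-h})_+$. Combining each with Chebyshev at $h=H\mp\sqrt{2V/\varepsilon}$ yields two consequences. For $t\ge t_0$ (where $\tv\le 1-\varepsilon$), we get $H_t\le \log(2/\varepsilon)+\sqrt{2V_t/\varepsilon}$. For $t< t_1$ (where $\tv>\varepsilon$), we get $H_t+\sqrt{2V_t/\varepsilon}\ge c\,\varepsilon$ for an absolute constant $c$.

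\emph{Step 2: differential inequality on $[t_0,t_1]$.} There $V_t\le \psi(t)\cE_t\le m_\varepsilon \cE_t$, where $\cE_t:=\cE(f_t,\log f_t)=-\dot H_t$. For the mLSI bound, Step 1 gives $H_t\lesssim \log(1/\varepsilon)+\sqrt{m_\varepsilon\cE_t/\varepsilon}$ at $t=t_0$ and $\cE_t\ge \alpha H_t$ throughout. I would integrate $\dot H\le -\alpha H$ from $t_0$ while $H$ stays above the threshold $\asymp\varepsilon$ forced by Step 1 until $t_1$. Where $H_t$ is small, $\sqrt{V_t/\varepsilon}$ must carry the lower bound, which gives $\cE_t\gtrsim \varepsilon^3/m_\varepsilon$; in that regime $H$ drops linearly and can only last a bounded time. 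Optimizing yields $t_1-t_0\le \alpha^{-1}\log(C\alpha(1+m_\varepsilon)/\varepsilon^4)$. For the Poincaré version I would run the same scheme but compare with $\Var(f_t)$ instead of $H_t$. I would use $\frac{d}{dt}\Var(f_t)=-2\cE(f_t,f_t)\le-2\lambda\Var(f_t)$ together with $\Var\ge$ (a function of $\tv$) and $\cE(f,\log f)$ bounds. This produces a term $1/(\lambda\varepsilon^3)$ from the linear-decay phase and a term $\sqrt{4m_\varepsilon/(\lambda\varepsilon^3)}$ from balancing $\sqrt{V/\varepsilon}$ against the entropy drop via Cauchy--Schwarz.

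\emph{Main obstacle.} The delicate point is bookkeeping the constants to get exactly the stated forms, e.g.\ the $e/2$ and $\varepsilon^4$ inside the logarithm and the factor $4$ in the square root. In particular, for the Poincaré bound the Dirichlet form appearing in \eqref{Eq:IDI} is $\cE(f,\log f)$ rather than $\cE(f,f)$. I would first try to integrate $\sqrt{V_t}\le\sqrt{m_\varepsilon\cE_t}$ against time via Cauchy--Schwarz, $\int_{t_0}^{t_1}\sqrt{\cE_t}\,dt\le\sqrt{(t_1-t_0)(H_{t_0}-H_{t_1})}$. I would then control $H_{t_0}$ in terms of $\Var(f_{t_0})$ and the spectral decay $\Var(f_t)\le e^{-2\lambda(t-s)}\Var(f_s)$, solving the resulting quadratic inequality in $\sqrt{t_1-t_0}$.
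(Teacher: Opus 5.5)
There is a genuine gap. Your Step~1 comparisons are correct. The first one, $H_t\le \log(2/\varepsilon)+\sqrt{2V_t/\varepsilon}$ for $t\ge t_0$, is a fine substitute for the reverse Pinsker inequality \eqref{Eq:ReversePinsker}. The identity $\dot H_t=-\cE(f_t,\log f_t)$ is also right: it is \eqref{Eq:genericDerivEnt} rewritten with the definition of $\cE$.

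The problem is in Step~2. Nothing in your scheme bounds the entropy shortly after $t_0$ independently of the initial condition, whereas the statement involves only $\lambda,\alpha,\varepsilon,m_\varepsilon$.

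\begin{itemize}
\item For the mLSI bound, integrating $\dot H\le -\alpha H$ from $t_0$ costs $\alpha^{-1}\log(H_{t_0}/\varepsilon)$, and your argument gives no bound on $H_{t_0}$ in terms of the parameters of the statement. Your single-time inequality $H_{t_0}\lesssim \log(1/\varepsilon)+\sqrt{m_\varepsilon\cE_{t_0}/\varepsilon}$ is a \emph{lower} bound on the dissipation $\cE_{t_0}$, not an upper bound on $H_{t_0}$.
\item In the Poincar\'e part, controlling $H_{t_0}$ through $\Var(f_{t_0})\le e^{-2\lambda(t_0-s)}\Var(f_s)$ merely moves the problem to $\Var(f_s)$, which again depends on the initial condition. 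Your Cauchy--Schwarz bound also still has $H_{t_0}$ on its right-hand side.
\end{itemize}

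The missing idea is to use Step~1 as a differential inequality on the whole interval $[t_0,t_1]$. With $a:=\log(2/\varepsilon)$,
\[
-\dot H_t\;=\;\cE_t\;\ge\;\frac{V_t}{m_\varepsilon}\;\ge\;\frac{\varepsilon}{2m_\varepsilon}\,(H_t-a)_+^2 ,
\]
so $\frac{d}{dt}(H_t-a)^{-1}\ge \frac{\varepsilon}{2m_\varepsilon}$ while $H_t>a$. Therefore $H_{t_0+s}\le a+\frac{2m_\varepsilon}{\varepsilon s}$ \emph{whatever $H_{t_0}$ is}; if $t_0+s>t_1$ there is nothing to prove. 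This Riccati-type forgetting of the initial entropy is the heart of the varentropy method. With \eqref{Eq:ReversePinsker} in place of your comparison, the same computation gives $\Ent(t_0+s)\le \frac1\varepsilon+\frac{m_\varepsilon}{\varepsilon^2 s}$, as in the proof of Theorem~\ref{Th:Relaxed}.

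You then need an entropy-to-mixing-time conversion. For the Poincar\'e bound, running $L^2$ decay on $f_t$ itself cannot work, because $\Var(f_t)$ is not controlled by $\Ent(f_t)$: only $\Ent\le\log(1+\Var)$ holds, and $\Var(f_t)$ can be vastly larger than $e^{\Ent(f_t)}$. The right tool is the truncation estimate \eqref{Eq:MixEnt} from \cite{Sal25}, $\tmix(\varepsilon)\le t+\frac{1+\Ent(t)}{\lambda\varepsilon}$. One conditions the law of $X_t$ on $\{f_t\le K\}$ with $\log K\asymp (1+\Ent(t))/\varepsilon$, so that the conditioned density has variance $O(K)$, and only then applies the spectral-gap decay.

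Plugging in the entropy bound gives $\tmix(\varepsilon)-t_0\le s+\frac{1}{\lambda\varepsilon}+\frac{1}{\lambda\varepsilon^2}+\frac{m_\varepsilon}{\lambda\varepsilon^3 s}$. The term $\frac{1}{\lambda\varepsilon^3}$ comes from $\frac{1}{\lambda\varepsilon}(1+\frac1\varepsilon)$ with $\varepsilon\le\frac12$, not from a decay phase. Minimizing $s+\frac{m_\varepsilon}{\lambda\varepsilon^3 s}$ gives exactly $\sqrt{4m_\varepsilon/(\lambda\varepsilon^3)}$.

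For the mLSI bound, combine $\Ent(t+u)\le e^{-\alpha u}\Ent(t)$ with Pinsker, i.e.\ $\tmix(\varepsilon)\le t+\alpha^{-1}\log_+\bigl(\Ent(t)/(2\varepsilon^2)\bigr)$, and take $s=1/\alpha$. This yields the logarithmic bound.

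Your second Step~1 comparison and the ``linear-drop phase'' are not needed. The duration bound that phase gives is of order $m_\varepsilon/\varepsilon^2$, linear rather than logarithmic in $m_\varepsilon$, so it could not produce the stated mLSI estimate anyway.
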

What makes this result effective is the crucial observation that a varentropy estimate of the form \eqref{Eq:IDI} is guaranteed to hold as soon as the process $(X_t)_{t\ge 0}$ satisfies a rather standard concentration property known as a \emph{local Poincaré inequality}. To describe it, we assume to be given a rich enough algebra $\cA$ of test functions contained in the domain of $\cL$ and preserved under the action of $\cL$ and $(P_t)_{t\ge 0}$. The formula
\begin{eqnarray*} 
\Gamma(f,g) & := & \frac{1}{2}\left(\cL(fg) - f \cL g - g\cL f\right),
\end{eqnarray*}
then defines a  symmetric bilinear operator $\Gamma\colon\cA^2\to\cA$ called the  \emph{carré du champ}, which captures the infinitesimal evolution of variances along the Markov process, just as the generator does for expectations. Specifically, a classical computation~\cite[Eq. (3.1.20) and (3.1.21)]{BGL14} yields 
\begin{eqnarray*} 
\Var\left[f(X_t)\,|\,X_0\right] & = & 2\int_0^t (P_{t-s}\Gamma P_{s}f)(X_0){\,\rm d}s,
\end{eqnarray*}
for every $f\in\cA$ and $t\ge 0$, where  $\Gamma f:=\Gamma(f,f)$. Consequently, if our semi-group and carré-du-champ operator satisfy a mild \emph{sub-commutation relation} of the form 
\begin{eqnarray}
\label{Eq:SubCommutation}
\forall f\in\cA,\qquad\forall s\ge 0,\quad \Gamma P_s f & \le & \phi(s)\,P_s\Gamma f,
\end{eqnarray}
for some function $\phi\colon\R_+\to \R_+$, then we readily obtain  the \emph{local Poincaré inequality}
\begin{eqnarray}
\label{Eq:LocalPoincaré}
\Var\left[f(X_t)\,|\,X_0\right] \ \le \ \psi(t)\,\E\left[\Gamma f(X_t)\,|\, X_0\right], & \textrm{where} & \psi(t) \ := \ 2\int_0^t\phi(s){\,\rm d}s.
\end{eqnarray}
In particular, assuming that $X_0$ is non-random and that $\log f_t\in\cA$, we may safely take $f=\log f_t$ to conclude that
\begin{eqnarray}
\label{Eq:VarentGamma}
\Varent(f_t) & \le & \psi(t)\,\int f_t\Gamma \log f_t{\,\rm d}\pi.
\end{eqnarray}
In the smooth setting of Markov diffusions, the integral on the right-hand side is classically equal to $\cE(f_t,\log f_t)$, and \eqref{Eq:IDI} is established. On discrete state spaces, a similar relation has been shown to hold up to controllable error terms, as will be recalled below.\\

We are now left with the practical problem of establishing a sub-commutation relation of the form  \eqref{Eq:SubCommutation}, with an explicit and as small as possible  cost function $\phi$. Note that, by the semi-group property, the best possible $\phi$ automatically satisfies
\begin{eqnarray}
\label{Eq:SubMultiplicativity}
\phi(0)=1,\quad\textrm{ and }\quad \forall s,t\ge 0,\quad \phi(t+s) & \le & \phi(t)\phi(s).
\end{eqnarray}
Consequently, a uniform exponential estimate of the form
\begin{eqnarray}
\label{Eq:PhiExp}
\phi(t) & \le & e^{-2\kappa t},
\end{eqnarray}
is guaranteed to hold for all $t\ge 0$ as soon as it holds in an infinitesimal neighborhood around $t=0$. By an easy computation~\cite[Cor 3.3.19]{BGL14}, this reduces to the celebrated criterion
\begin{eqnarray}
\label{Eq:CDKappa}
\forall f\in\cA,\qquad \Gamma_2 f & \ge & \kappa \Gamma f,
\end{eqnarray}
where $\Gamma_2 f:=\frac{1}{2}\cL \Gamma f-\Gamma(f,\cL f)$ denotes the iterated carré du champ operator. The largest $\kappa\in[-\infty,\infty)$ for which this inequality holds is known as the Bakry-\'Emery curvature of the process, and the local nature of  \eqref{Eq:CDKappa} makes it easy to estimate in practice.  As soon as $\kappa\ge 0$, the varentropy estimate  (\ref{Eq:VarentGamma}) holds with $\psi(t)=2t$, which suffices to yield sharp estimates on the mixing window of many important models. This is, in a nutshell, the content of the works~\cite{Sal25,PeSa25}. \\

The starting point of the present work is the simple observation that the infinitesimal behavior of the sub-commutation cost $t\mapsto \phi(t)$ near $t=0$ could, in some cases, provide a very pessimistic estimate on its large-time behavior. For example, the function  $\phi\colon t\mapsto e^{\sqrt{t}-2t}$ clearly satisfies (\ref{Eq:SubMultiplicativity}), but the expansion $\phi(t)=1+\sqrt{t}+o(\sqrt{t})$ at $t=0$ is clearly incompatible with any    estimate of the form \eqref{Eq:PhiExp}, unless $\kappa=-\infty$. Clearly, it would be much better here to use instead the estimate $\phi(t)\le e^{1-t}$, where the bad behavior near $t=0$ has been simply absorbed into a  pre-factor whose impact on the final varentropy estimate \eqref{Eq:VarentGamma} is harmless. In light of this idealized example, we will focus on the following sub-commutation relation:
\begin{eqnarray*}
\forall f\in\cA,\quad \forall t\ge 0,\quad\Gamma P_t f & \le & Me^{-2\kappa t} P_t\Gamma f.
\end{eqnarray*}
Note that we must necessarily have $M\ge 1$, with equality corresponding to the Bakry-\'Emery definition of curvature bounded from below by $\kappa$. In view of \eqref{Eq:LocalPoincaré}, this relaxed notion guarantees the same local Poincaré inequality as the original one, up to the multiplicative constant $M$. 
Of course, the real price to pay is the lack of a convenient infinitesimal characterization of the form \eqref{Eq:CDKappa}. Nevertheless, we will see that effective bounds on the parameters $(M,\kappa)$ can easily be obtained in practice using couplings, and that this substantially broadens the class of processes for which a cutoff phenomenon can be predicted. Let us start with the important case of elliptic diffusions, for which the situation is particularly pleasant.

\subsection{Elliptic diffusions in non-convex potential}\label{sec:NonConvex}

Our first contribution is the following extension of the main result in~\cite{Sal25}, which corresponds to the special case $M=1$.

\begin{theorem}[Cutoff for diffusions]\label{Th:Relaxed}
Assume the following:
\begin{enumerate}
\item (Chain rule) The following identity holds generically:
\begin{equation}\label{Eq:ChainRule}
{\cL e^f} \ = \ e^f\left(\cL f + \Gamma  f\right)\;.
\end{equation}
\item (Spectral gap) The Poincar\'e constant $\lambda$ defined in~\eqref{Eq:Poincaré} is strictly positive.
\item (Approximate curvature) There is $M<\infty$ such that, generically:
\begin{equation}\label{Eq:RelaxedCurvature}
\forall t\ge 0,\qquad \Gamma P_t f \ \le \ M\, P_t\Gamma f \;.
\end{equation}
\end{enumerate}
Then, from any deterministic initialization, we have the  mixing-window estimate
	$$ \forall \epsilon\in\left(0,\frac 12\right),\qquad \wmix(\epsilon) \ \le \ \frac{1}{\lambda \epsilon^3} + \sqrt{\frac{8M\tmix(\epsilon)}{\lambda \epsilon^3}}\;.$$
In particular, a family of such processes exhibits cutoff as soon as
	$$ \exists \epsilon \in (0,1)\;,\quad \frac{\lambda_n {\rm t}_{\textsc{mix}}^{(n)}(\epsilon)}{M_n} \underset{n\to\infty}{\longrightarrow} +\infty\;.$$
\end{theorem}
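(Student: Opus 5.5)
The plan is to reduce everything to Theorem~\ref{Th:Varentropy}. We need a varentropy estimate of the form \eqref{Eq:IDI} with an explicit $\psi$, and then we bound $m_\varepsilon$.

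First, assumption~3 says the sub-commutation relation \eqref{Eq:SubCommutation} holds with the constant cost $\phi\equiv M$. The variance formula $\Var[f(X_t)\,|\,X_0]=2\int_0^t (P_{t-s}\Gamma P_s f)(X_0)\,{\rm d}s$ then gives the local Poincaré inequality \eqref{Eq:LocalPoincaré} with $\psi(t)=2Mt$. Here we use that $P_{t-s}$ is positivity preserving and apply it to $\Gamma P_s f\le M P_s\Gamma f$, which yields $P_{t-s}\Gamma P_sf\le MP_t\Gamma f$.

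Next, since $X_0=x$ is deterministic, the law of $X_t$ has density $f_t$ with $\E[g(X_t)]=\int f_t g\,{\rm d}\pi$. Taking $g=\log f_t$ (assumed to lie in $\cA$ generically, or reached by a routine approximation/truncation argument) gives \eqref{Eq:VarentGamma}:
\[
\Varent(f_t)\le 2Mt\int f_t\Gamma\log f_t\,{\rm d}\pi .
\]
The chain rule \eqref{Eq:ChainRule} identifies the right-hand integral with $\cE(f_t,\log f_t)$. Indeed, with $h=\log f_t$ we have $\cL f_t=f_t(\cL h+\Gamma h)$, so $f_t\Gamma h=\cL f_t - f_t\cL h$. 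Integrating against $\pi$ and using invariance ($\int\cL f_t\,{\rm d}\pi=0$) gives $\int f_t\Gamma h\,{\rm d}\pi=-\int f_t\cL h\,{\rm d}\pi=\cE(f_t,\log f_t)$. This establishes \eqref{Eq:IDI} with $\psi(t)=2Mt$.

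Finally, we apply the first bound in \eqref{Eq:WmixLambdaAlpha}. Since $\psi$ is increasing, $m_\varepsilon=\sup_{[\tmix(1-\varepsilon),\tmix(\varepsilon)]}\psi=2M\tmix(\varepsilon)$, so $4m_\varepsilon=8M\tmix(\varepsilon)$, which is exactly the announced estimate; the spectral gap assumption guarantees $\lambda>0$ so the bound is finite.

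For the cutoff criterion, divide by $\tmix(\epsilon)$:
\[
\frac{\wmix(\epsilon)}{\tmix(\epsilon)}\le \frac{1}{\lambda\epsilon^3\tmix(\epsilon)}+\sqrt{\frac{8M}{\lambda\epsilon^3\tmix(\epsilon)}}.
\]
Since $M\ge1$, the hypothesis $\lambda_n\tmix^{(n)}(\epsilon)/M_n\to\infty$ makes both terms vanish, at least for the given $\epsilon$. To pass to all $\epsilon\in(0,1/2)$, I would use the standard fact that, under a product-type condition, divergence at one precision level propagates to all others. Concretely, $\tmix(\epsilon')$ for different $\epsilon'$ differ by at most $\wmix$-type quantities controlled by $O(1/\lambda+\sqrt{M\tmix/\lambda})$. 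Hence $\lambda\tmix(\epsilon')/M$ also diverges, for instance by comparing to $\tmix(\epsilon)$ through the width bound applied at $\min(\epsilon,1-\epsilon,\epsilon')$.

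The only genuinely delicate points are the justification that $\log f_t$ is an admissible test function (regularity/integrability of $f_t$, handled by "generically") and this last propagation step across $\epsilon$. I expect the latter to be the main obstacle, resolved by the monotonicity of $\tmix$ together with the width bound itself.
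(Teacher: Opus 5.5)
Your proposal is correct and is essentially the paper's argument. Both obtain $\Varent(t)\le 2Mt\,I^\Gamma(t)$ from the approximate curvature and identify $I^\Gamma(t)$ with $\cE(f_t,\log f_t)$ through the chain rule. The paper then re-runs the reverse-Pinsker, differential-inequality and \eqref{Eq:MixEnt} steps by hand, while you invoke Theorem~\ref{Th:Varentropy} directly with $m_\varepsilon=2M\tmix(\varepsilon)$.

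Your packaging is the cleaner one. Freezing $\psi$ at its supremum on $[\tmix(1-\varepsilon),\tmix(\varepsilon)]$ gives the constant $\sqrt{8M\tmix(\varepsilon)/(\lambda\varepsilon^3)}$ at once. The paper instead integrates $1/(2Mt)$ and gets a $\log(t/t_0)$, which has to be handled via $\log(t/t_0)\ge (t-t_0)/t$ with $t\le\tmix(\varepsilon)$. Its written replacement of $1/\log(t/t_0)$ by $t_0/(t-t_0)$ goes the wrong way, since $\log(t/t_0)\le (t-t_0)/t_0$.
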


Let us make a few comments on the assumptions. First, thanks to the usual chain rule for differentiating composed functions, the identity \eqref{Eq:ChainRule} holds as soon as $\cL$ is a second-order differential operator acting on a space of smooth functions, as is classically the case for solutions to stochastic differential equations on Euclidean spaces or manifolds.
Second, the positivity of the Poincaré constant $\lambda$ yields the classical bound
\begin{equation}\label{Eq:SpectralGap}
	\Var_\mu(P_t^\star f)\ \le \ \Var_\mu(f) e^{-2 \lambda t}\;.
\end{equation}
Finally and perhaps more importantly, the requirement \eqref{Eq:RelaxedCurvature} is a substantial relaxation of the classical non-negative curvature condition of Bakry and \'Emery \cite{BE85}, which corresponds to the case $M=1$. In the context of diffusions on Euclidean spaces (or smooth manifolds) with volatility equal (or at least proporitional) to the identity matrix, we have $\Gamma(f)=  |\nabla f|^2$ and a general duality result due to Kuwada \cite{Kuw10} guarantees that  \eqref{Eq:RelaxedCurvature}  is equivalent to the geometric property
\begin{equation}\label{Eq:Contraction}
\forall t\ge 0,\qquad W_2(\mu P_t,\nu P_t) \ \le \ \sqrt{M}W_2(\mu,\nu)\;,
\end{equation}
for all $\mu,\nu\in\cP(E)$, where $W_2(\mu,\nu):=\inf_{X\sim \mu, Y\sim\nu}\sqrt{\mathbb E[|X-Y|^2]}$ is the $2-$Wasserstein distance between $\mu$ and $\nu$. 
This provides an effective, coupling-based method for verifying  \eqref{Eq:RelaxedCurvature} on concrete models. A simple generic example, studied for example in \cite{Mon23}, is the Langevin diffusion whose potential fails to be convex in a compact region. 

\begin{corollary}[Langevin dynamics in non-convex potential] \label{Cor:elliptic_high_temp}
	Let $(X_t)_{t\ge 0}$ solve
	$$dX_t \ =\ -\nabla U(X_t)dt + \sqrt{2T}dB_t\;,$$
	on $E=\R^d$, with $B$ a $d-$dimensional Brownian motion, $X_0$ a deterministic initialization, $T>0$ a parameter, and $U\colon \R^d\to\R$ a smooth potential satisfying for some $R,K,c,\delta \in(0,\infty)$,
	\begin{enumerate}
		\item $\nabla^2 U(x) \ge c \mathrm{Id}$ when $|x|>R$;
		\item $-K Id \le \nabla^2 U(x)\le K Id$ on $\R^d$;
		\item $|\nabla U(0)| \leq \delta$.
	\end{enumerate} 
	Then, there is $C<\infty$ depending only on $(K,c)$, such that whenever $T\ge C(R^2+\delta R)/d$,
	$$ \forall \epsilon\in\left(0,\frac 12\right),\qquad \wmix(\epsilon) \ \le \ \frac{32 T}{c \epsilon^3} + 16\sqrt{\frac{2T \tmix(\epsilon)}{c\epsilon^3}}\;.$$
	In particular, any sequence of diffusions satisfying the above assumptions for some fixed triple $(K,c,T)$ and variable $R_n, \delta_n, d_n$ exhibits cutoff as soon as
	\begin{enumerate}
	\item $\max(R_n,\delta_n) \le C_0 \sqrt{d_n}$ for some constant $C_0$,
	\item $T \ge 2 C C_0^2$,
	\item $ \exists \epsilon \in (0,1)$, ${{\rm t}_{\textsc{mix}}^{(n)}(\epsilon)} \ \underset{n\to\infty}{\longrightarrow} \  +\infty$.
	\end{enumerate}
\end{corollary}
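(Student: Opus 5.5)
We will deduce the corollary from Theorem \ref{Th:Relaxed}, so we need to check its three hypotheses with explicit constants. The chain rule \eqref{Eq:ChainRule} holds because $\cL f = T\Delta f - \nabla U\cdot\nabla f$ is a second-order differential operator. Here $\Gamma f = T|\nabla f|^2$, and the invariant law is $\pi\propto e^{-U/T}$.

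The core of the argument is a contraction estimate of the form
$$W_2(\mu P_t,\nu P_t)\le \sqrt{M} e^{-\kappa t}W_2(\mu,\nu),$$
with $M$ a universal constant (say $M\le 4$) and $\kappa\asymp c$. Since $\Gamma$ is proportional to $|\nabla\cdot|^2$, Kuwada's duality \eqref{Eq:Contraction} turns this into \eqref{Eq:RelaxedCurvature} with constant $M$ (even with a factor $e^{-2\kappa t}$). To prove the contraction we use Eberle's reflection coupling together with a concave distance $f(r)$. This is the standard route of \cite{Mon23} and Eberle's work for potentials that are convex outside a ball of radius $R$.

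Eberle's construction gives $W_1$ contraction with comparison constant $\exp(K R^2/(4T))$ or so, and this is exactly where the temperature condition enters. When $T\ge C(R^2+\delta R)/d$, hmm, the natural constant is $\exp(CKR^2/T)$, which is of order one when $T\gtrsim R^2$. The factor $d$ in the hypothesis suggests a different mechanism. At high temperature the diffusion spends negligible time in the nonconvex ball, and after time $O(1)$ it has moved at distance $\sqrt{Td}\gg R$. We would therefore pair two ingredients. First, a synchronous coupling gives $|X_t-Y_t|\le e^{Kt}|x-y|$ for short times. Second, we need an estimate showing that the occupation time of $B(0,R)$ under $\pi$ and along trajectories is small. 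We would use the mixed Hessian lower bound $\nabla^2U\ge c\,\mathrm{Id} - (K+c)\mathbf 1_{B_R}$. Under synchronous coupling,
$$\frac{d}{dt}|X_t-Y_t|^2\le -2c|X_t-Y_t|^2+2(K+c)\mathbf 1\{\text{segment meets }B_R\}|X_t-Y_t|^2,$$
so we need to control $\int_0^t \mathbf 1\{[X_s,Y_s]\cap B_R\neq\emptyset\}ds$. The term $\delta R$ is designed to bound the drift near the origin, since $|\nabla U|\le \delta+KR$ on $B_R$. For $T d\ge C(R^2+\delta R)$ the process is pushed out of $B_R$ quickly because $|X|^2$ has generator term $2Td$ dominating $2X\cdot\nabla U\le 2R(\delta+KR)$. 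This reduction to a Gronwall argument is the main obstacle. A pathwise uniform bound is false, so we would aim instead for an expectation bound and then use Kuwada's duality in its $L^2$ form.

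For the spectral gap, we combine Bakry--Émery outside $B_R$ with Holley--Stroock or Lyapunov perturbation, or simply integrate the gradient bound $|\nabla P_tf|^2\le Me^{-2\kappa t}P_t|\nabla f|^2$ in time. This gives $\lambda\ge \kappa/M$, of order $c/T$ in normalized units, hence $1/\lambda\le 32T/c$-type constants. Plugging $M$ and $\lambda$ into Theorem \ref{Th:Relaxed} yields the width bound. The cutoff statement follows because $\lambda_n\tmix^{(n)}/M_n\to\infty$ once $\tmix^{(n)}\to\infty$ with $T$, $c$, and $M$ fixed. The hypotheses $\max(R_n,\delta_n)\le C_0\sqrt{d_n}$ and $T\ge 2CC_0^2$ ensure that $T\ge C(R_n^2+\delta_nR_n)/d_n$ holds uniformly in $n$.
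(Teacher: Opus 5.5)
Your reduction to Theorem \ref{Th:Relaxed} is the right frame: the chain rule, Kuwada duality for $\Gamma=T|\nabla\cdot|^2$, the spectral gap obtained by letting $t\to\infty$ in the local Poincar\'e inequality, and $R_n^2+\delta_nR_n\le 2C_0^2d_n$ for the cutoff part. But the one step that carries all the content is not proved. That step is a $W_2$ contraction $W_2(\mu P_t,\nu P_t)^2\le Me^{-2\varrho t}W_2(\mu,\nu)^2$, valid as soon as $T\ge C(R^2+\delta R)/d$; you call it ``the main obstacle'' and stop there. Moreover, the quantity you propose to control is not workable. Under synchronous coupling you bound the growth of $|X_t-Y_t|^2$ using the two-point indicator $\mathbf 1\{[X_t,Y_t]\cap B_R\neq\emptyset\}$. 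This gives a growth rate $2K>0$ whenever the segment touches the ball, for instance whenever $X_t$ and $Y_t$ lie on opposite sides of it, however far out. No occupation-time estimate for a single trajectory controls that, and the bound discards the fact that a long segment spends length at most $2R$ in $\bar B_R$. The Eberle/$W_1$ route you mention first cannot work either: by Kuwada duality a $W_1$ contraction only yields a Lipschitz bound on $P_tf$, not the pointwise estimate \eqref{Eq:RelaxedCurvature}.

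The missing idea is a one-point reduction.
\begin{itemize}
\item Set $k(x):=\inf_{y\neq x}\langle x-y,\nabla U(x)-\nabla U(y)\rangle/|x-y|^2$. Synchronous coupling then gives, pathwise, $|X_t-Y_t|^2\le |x-y|^2\exp(-2\int_0^tk(X_s)\,ds)$, a functional of one trajectory only.
\item One has $k\ge -K$, and $k(x)\ge c/4$ for $|x|\ge R_0:=(1+a)R$ with $a=a(K,c)$ large. If $|x-y|\le aR$, the segment avoids $B_R$ and the pairing is $\ge c|x-y|^2$. Otherwise the pairing is $\ge c|x-y|^2-2(K+c)R|x-y|\ge \frac c4|x-y|^2$ by Young's inequality, once $a\ge 2\sqrt2(1+K/c)$.
\item These are exactly the hypotheses of Theorem \ref{thm_abstrait_monmarche} with $c_0=c/4$. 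The paper invokes that theorem rather than proving an occupation-time bound by hand. The term $\sup_{|x|\le R_*}\{x\cdot\nabla U(x)\}/d$ in $T_0$ is the rigorous form of your comparison of $2Td$ with $2x\cdot\nabla U$.
\item Since $R_*\le(2+8K/c)R_0$ and $|\nabla U(x)|\le\delta+K|x|$, one gets $T_0\le C(K,c)(R^2+\delta R)/d$; this is where $\delta R$ enters.
\item For $T\ge T_0$, the theorem gives $M=1+(2K+c_0)R_*^2/(2dT)\le 1+c_0/(4K+c_0)<2$ and $\varrho=c/16$.
\end{itemize}
These exact values are what produce $1/\lambda\le MT/\varrho\le 32T/c$ through Lemma \ref{Lem:relaxation_time_by_contraction}, and the constant $16\sqrt2$ in Theorem \ref{Th:Relaxed}. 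A generic ``$M\le 4$, $\kappa\asymp c$'' would not reproduce the stated bound.
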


To illustrate this result, let us present two examples.

\begin{example}[Fixed dimension and diverging initial condition]
	We consider a fixed diffusion in $\R^d$ and pick a sequence of initial conditions $(x_n)_n$ that diverges. We take $U = V +W$ where $V$ is quadratic and $W$ is a smooth, compactly supported function. Since no additional hypothesis is made of $W$, $U$ is not convex in general. However the assumptions made on $U$ in Corollary \ref{Cor:elliptic_high_temp} are satisfied. Now take a sequence $(x_n)_n$ s.t.~$|x_n|\to\infty$ as $n\to\infty$. Since the potential is quadratic at infinity, the mixing time is asymptotically of order $\log n$ so our corollary ensures that cutoff occurs.\\
	It is interesting to note that if $V$ is not quadratic but grows like a polynomial of degree strictly larger than $2$, then one assumption of Corollary \ref{Cor:elliptic_high_temp} is not met; and at the same time the mixing times remain bounded with $n$ and cutoff does not occur in that setting.  
\end{example}

\begin{example}[Diverging dimension]
	In dimension  $d_n = n$, consider the following potential, which is a regularization of the more natural quantity $(|x|-\sqrt{n})^2/2$. 
	$$U_n(x) = \frac{1}{2}|x|^2 -2\sqrt{n}(n+|x|^2)^{1/2}.$$
	Then, Corollary \ref{Cor:elliptic_high_temp} applies with $K = 1$, $\delta = 0$, $c = 1/2$ and $R = \sqrt{15n}$.  
\end{example}

Note that $\sqrt{d_n}$ is a natural scale for the variable $R_n$ and $B_n$, under which the required lower bound on $T_n$ is uniform. We could have stated a version with relaxed assumptions (in particular, we can easily weaken the assumption of uniform  upper bound on $\nabla^2U$, as shall be seen in the proof), but we chose to restrict our attention to this simple setting to obtain a more compact formulation. Similarly, the condition of convexity outside a ball can be relaxed by  a more careful use of Theorem \ref{thm_abstrait_monmarche}. 

\subsection{Biased random walks on finite groups}\label{sec:Groups}
Let us now consider the case where the state space $E$ is finite, as considered in \cite{PeSa25}. Upon a trivial time rescaling, our Markov generator $\cL$ then takes the form
\begin{equation}\label{Eq:DiscreteGenerator}
\forall x\in E,\qquad (\cL f)(x) \ := \ \sum_{y\in E}T(x,y)\left(f(y)-f(x)\right)\;,
\end{equation}
for all $f\colon E\to\R$, where $T$ is a stochastic matrix that specifies the transition probabilities. The associated carré du champ is given by
\begin{equation}\label{Eq:DiscreteGamma}
\forall x\in E,\qquad (\Gamma f)(x) \ := \ \frac{1}{2}\sum_{y\in E}T(x,y)\left(f(y)-f(x)\right)^2\;.
\end{equation}
Although the chain rule \eqref{Eq:ChainRule} notoriously fails in this context, a suitable quantitative approximation of it was established in \cite{PeSa25}, under the mild symmetry assumption
\begin{equation}\label{Eq:Symmetry}
\forall (x,y)\in E^2,\qquad T(x,y) \ > \ 0 \ \Longrightarrow \ T(y,x)\ >\ 0\;.
\end{equation}
This was used in \cite{PeSa25} to produce an effective cutoff criterion for non-negatively curved Markov chains. As we will see, the proof extends verbatim under our relaxed notion of curvature, yielding the following generalization of the main result in \cite{PeSa25}. 
\begin{theorem}[Cutoff for discrete chains]\label{Th:RelaxedDiscrete}
Consider a finite-state space Markov chain with symmetric support as above. Assume that there are $d,M,\alpha\in(0,\infty)$ such that:
\begin{enumerate}
	\item (Sparsity) for every \(x\ne y\), \(T(x,y)>0\) implies \(T(x,y)\ge d^{-1}\);
	\item (Entropy decay) The modified log-Sobolev constant defined in~\eqref{Eq:mLSI} is at least $\alpha > 0$;
	\item (Approximate curvature) for every \(f:E\to\R\) and $t\ge 0$,
	\begin{equation}\label{Eq:RelaxedCurvatureDiscrete}
	\Gamma P_t f\le M\,P_t\Gamma f.
	\end{equation}
\end{enumerate}
Then, from any deterministic initial condition, and for every \(\varepsilon\in(0,1/2)\), we have
\[
\wmix(\varepsilon)
\le
\frac{c_\varepsilon}{\alpha}
\left(
1+\log M+\log(1+\log d)
+\log\bigl(1+\alpha\tmix(\varepsilon)\bigr)
\right),
\]
where \(c_\varepsilon<\infty\) depends only on \(\varepsilon\).  In particular,  any sequence of such chains exhibits cutoff as soon as, for some \(\delta\in(0,1)\),
\begin{equation}\label{Eq:CutoffDiscrete}
\frac{\alpha_n{\rm t}_{\textsc{mix}}^{(n)}(\delta)}
{1+\log M_n+\log(1+\log d_n)}
\longrightarrow+\infty.
\end{equation}
\end{theorem}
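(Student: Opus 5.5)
The plan is to follow the argument of \cite{PeSa25} line by line and track where the constant $M$ enters. The only place curvature is used there is in the local Poincaré inequality \eqref{Eq:LocalPoincaré}. Under \eqref{Eq:RelaxedCurvatureDiscrete} we may take $\phi\equiv M$ in \eqref{Eq:SubCommutation}, so $\psi(t)=2Mt$. With $X_0=x$ deterministic and $f=\log f_t$, we then get
\begin{equation*}
\Varent(f_t)\ \le\ 2Mt\int f_t\,\Gamma\log f_t\,{\rm d}\pi .
\end{equation*}
The remaining task is to compare $\int f_t\Gamma\log f_t\,{\rm d}\pi$ with $\cE(f_t,\log f_t)$, since the chain rule \eqref{Eq:ChainRule} fails on discrete spaces.

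For this comparison we invoke the approximate chain rule of \cite{PeSa25}. It uses the symmetry \eqref{Eq:Symmetry} and the sparsity assumption $T(x,y)\ge d^{-1}$, and it bounds $\int f\,\Gamma\log f\,{\rm d}\pi$ by a multiple of $\cE(f,\log f)$. The multiple is logarithmic in the ratios $f_t(y)/f_t(x)$ across edges, plus error terms. Those ratios are controlled at positive times by a short-time regularization argument from \cite{PeSa25}. This argument is curvature-free: it only uses the sparsity and the fact that $f_t=P_t^\star\delta_x/\pi$. It yields ratio bounds of order $\log d$ plus a polynomial in $t$, which is the source of the $\log(1+\log d)$ term. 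Combining the two steps gives a varentropy estimate \eqref{Eq:IDI} with
\begin{equation*}
\psi(t)\ \lesssim\ M t\,(1+\log d+\log(1+t))
\end{equation*}
up to the additive error terms of \cite{PeSa25}. We feed this into Theorem~\ref{Th:Varentropy}. Since the ratio bounds degrade for small $t$, we restrict to $t\in[\tmix(1-\varepsilon),\tmix(\varepsilon)]$, and if needed we treat separately the trivial case where $\tmix(1-\varepsilon)$ is tiny, using $\wmix\le\tmix(\varepsilon)$.

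Using the second bound in \eqref{Eq:WmixLambdaAlpha}, namely $\wmix(\varepsilon)\le\alpha^{-1}\log\left(e\alpha(1+m_\varepsilon)/(2\varepsilon^4)\right)$, with
\begin{equation*}
m_\varepsilon\ \lesssim\ M\,\tmix(\varepsilon)\,(1+\log d)\,(1+\log(1+\tmix(\varepsilon))),
\end{equation*}
the logarithm splits into
\begin{equation*}
\log M+\log(1+\log d)+\log\bigl(1+\alpha\tmix(\varepsilon)\bigr)+O_\varepsilon(1).
\end{equation*}
Two points need care here. First, the factor $\alpha$ pairs with $\tmix$, and the stray $\log(1+\tmix)$ must be absorbed using $\alpha\le 2\lambda\le 2\max_x\sum_y T(x,y)\le 2$. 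Second, the additive error terms of the approximate chain rule must be handled the same way as in \cite{PeSa25}. Together these give the stated bound with some $c_\varepsilon$. The cutoff criterion \eqref{Eq:CutoffDiscrete} then follows as in \cite{PeSa25}. Dividing the width bound by $\tmix(\varepsilon)$ gives a ratio of the form $\log(1+x)/x$ times bounded factors, with $x\to\infty$. To pass from a single $\delta$ to all $\varepsilon$, we use the monotonicity of $\tmix$ together with the width bound itself.

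The main obstacle is to check that the approximate chain-rule step of \cite{PeSa25} really does not use curvature anywhere. I believe curvature there only enters through $\psi$, but this must be verified carefully. If some step does rely on $\Gamma P_t\le P_t\Gamma$, for instance in regularity estimates for $\log f_t$, I would replace that use by the $M$-relaxed version. This should only cost further $\log M$ factors inside the logarithm, which is consistent with the stated bound.
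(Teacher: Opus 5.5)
\textbf{There is a genuine gap in the regularity step.} The spurious small-time issue and the stray $\log\tmix$ term in your bookkeeping both come from it.

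You describe the ratio bound of \cite{PeSa25} as using only sparsity and yielding $\Lip(\log f_t)\lesssim \log d+\log(1+t)$. No bound on $\Lip(\log f_t)$ in terms of $d$ and $t$ alone can hold, even at the times you care about. Suppose $y$ lies at graph distance $k\gg t$ from the start, and every path to its neighbour $z$ passes through $y$. Conditionally on reaching $y$ by time $t$, the walk arrives within time $O(t/k)$ of $t$, so $f_t(y)/f_t(z)\gtrsim k/t$.

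Here is a concrete example: attach a path of length $N/10$ to a bounded-degree expander on $N$ vertices, and start in the core. Then $\tmix(1-\varepsilon)=O(\log N)$. Yet at that time $\Lip(\log f_t)\gtrsim \log N$, whereas $\log d+\log(1+t)=O(\log\log N)$.

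What controls this distance effect is the entropy decay. The estimate the paper extracts from \cite[Lemmas 3--4]{PeSa25} is
\[
1+\Lip(\log f_t)\le 15\Bigl(\log d+\frac{1}{\alpha t}\Bigr)\qquad\text{for all } t>0.
\]
It uses sparsity \emph{and} the mLSI; in the example, $\alpha\lesssim N^{-2}$.

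Your two patches do not help:
\begin{itemize}
\item Falling back on $\wmix\le\tmix(\varepsilon)$ when $\tmix(1-\varepsilon)$ is small gives no bound of the required form.
\item The upper bound $\alpha\le 2$ cannot turn a stray $\log(1+\log(1+\tmix))$ into $\log(1+\alpha\tmix)$. That would require a \emph{lower} bound on $\alpha$.
\end{itemize}

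\textbf{With the correct estimate, your route goes through and is essentially the paper's.} The approximate chain rule of \cite[Proposition 1]{PeSa25} has no additive error terms. It is purely multiplicative:
\[
\frac{d}{dt}\Ent(t)\le -\frac{1}{1+\Lip(\log f_t)}\int f_t\,\Gamma\log f_t\,d\pi .
\]
Combined with the local Poincaré constant $2Mt$, this gives \eqref{Eq:IDI} with $\psi(t)\le 30M(t\log d+\alpha^{-1})$ for every $t>0$. The $1/(\alpha t)$ blow-up is cancelled by the factor $t$, so no case split is needed.

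Plugging $m_\varepsilon\le 30M(\tmix(\varepsilon)\log d+\alpha^{-1})$ into the second bound of \eqref{Eq:WmixLambdaAlpha} yields the statement. An upper bound on $\alpha$ is needed only for the lone additive $\alpha$ term.

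The paper does the same computation by hand. It integrates $\frac{d}{dt}\Ent\le-(\varepsilon\Ent-1)^2/(30M(t\log d+\alpha^{-1}))$ from $t_0=\tmix(1-\varepsilon)$ and takes $t=t_0+\alpha^{-1}$ in the entropy mixing-time bound.

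You are right that curvature enters only through the local Poincaré inequality.
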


To appreciate the gain offered by our relaxed curvature condition, let us consider the important special case of random walks on groups. More precisely, assume that $E$ is a finite group and that $\cL$ is of the form
\begin{equation}
\forall x\in E,\qquad \cL f(x) \ := \ \sum_{z\in E}\nu(z)\left(f(zx)-f(x)\right)\;,
\end{equation}
where $\nu\in\cP(E)$ is a probability measure whose support generates the group. In this context, we will see that the approximate curvature condition~\eqref{Eq:RelaxedCurvatureDiscrete} holds as soon as the jump distribution $\nu$ is approximately conjugacy-invariant, in the following sense:
\begin{equation}
\forall x,y\in E,\qquad \nu(yxy^{-1})\ \le\ M\nu(x)\;.
\end{equation}
To give a concrete example, consider the symmetric group $\mathfrak S_n$ with a jump distribution $\nu_n$ which is supported on transpositions. When $\nu_n$ is uniform, the occurrence of a cutoff as $n\to\infty$ is a celebrated historical result proved by Diaconis and Shahshahani using representation theory \cite{DiSh81}. Surprisingly, extending their approach to the case where $\nu_n$ takes two distinct values turned out to  constitute a remarkable tour-de-force, achieved only recently in \cite{NeYa24}. In contrast, our approach allows for an arbitrary number of values, and requires nothing more than a classical modified logarithmic Sobolev inequality \eqref{Eq:mLSI} under the uniform law \cite{BoTe06,GaQu03,Goe04}. 
\begin{corollary}[Biased transpositions on the symmetric group] \label{Cor:rw_symmetric_group}
For each $n\ge 1$, consider a random walk on the symmetric group ${\mathfrak S}_n$ whose jump distribution $\nu_n$ is supported on the set of transpositions  ${\mathcal T}_n$. Then a cutoff occurs as $n\to\infty$, as long as 
\begin{equation}
\sup_{n\ge 1} \left\{\frac{\max_{\tau\in {\mathcal T}_n}\nu_n(\tau)}{\min_{\tau\in{\mathcal T}_n}\nu_n(\tau)}\right\} \ < \ \infty.
\end{equation}
\end{corollary}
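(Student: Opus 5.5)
We plan to deduce the corollary from Theorem~\ref{Th:RelaxedDiscrete}, applied to the chain with generator $\cL f(x)=\sum_{z}\nu_n(z)(f(zx)-f(x))$ on $E=\mathfrak S_n$. Write $\beta:=\sup_n \max_\tau\nu_n(\tau)/\min_\tau\nu_n(\tau)<\infty$. Then each $\nu_n(\tau)$ lies in $[\beta^{-1}\binom n2^{-1},\beta\binom n2^{-1}]$. The plan has four steps: check the symmetric-support and sparsity hypotheses; prove the approximate curvature bound with $M=\beta$ by a coupling/conjugation argument; compare the modified log-Sobolev constant with the uniform-transposition walk; and bound $\tmix$ from below to obtain \eqref{Eq:CutoffDiscrete}.

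\textbf{Symmetry and sparsity.} Transpositions are involutions, so $T(x,\tau x)=T(\tau x,x)=\nu_n(\tau)$, and \eqref{Eq:Symmetry} holds. Sparsity holds with $d=\beta\binom n2$, so $\log(1+\log d)=O(\log\log n)$.

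\textbf{Approximate curvature.} This is the step we expect to be the main obstacle. Write $P_t f(x)=\E[f(Z_t x)]$, where $Z_t$ is the product of the random transpositions applied up to time $t$. Then
\[
\Gamma P_tf(x)=\tfrac12\sum_\tau\nu(\tau)\bigl(\E[f(Z_t\tau x)-f(Z_tx)]\bigr)^2 .
\]
By Jensen this is at most $\tfrac12\sum_\tau\nu(\tau)\E[(f(Z_t\tau Z_t^{-1}\cdot Z_tx)-f(Z_tx))^2]$. For a fixed permutation $\sigma$, the map $\tau\mapsto\sigma\tau\sigma^{-1}$ is a bijection of $\mathcal T_n$, and $\nu(\tau)\le\beta\,\nu(\sigma\tau\sigma^{-1})$. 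Reindexing therefore gives the bound $\beta\,\E[\Gamma f(Z_tx)]=\beta P_t\Gamma f(x)$. So \eqref{Eq:RelaxedCurvatureDiscrete} holds with $M=\beta$. The same argument should yield the general statement about approximately conjugacy-invariant $\nu$ announced before the corollary. The only care needed is to make the order of multiplication in $Z_t$ consistent with $\cL$, i.e.\ left multiplication.

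\textbf{Entropy decay.} Let $\cE_0$ be the Dirichlet form of the uniform transposition walk $\nu^0$. Both walks have uniform invariant law, and $\nu_n\ge\beta^{-1}\nu^0$ pointwise. Hence $\cE(f,\log f)\ge\beta^{-1}\cE_0(f,\log f)$, and $\alpha_n\ge\beta^{-1}\alpha^0_n$. The classical bound for the uniform walk \cite{BoTe06,GaQu03,Goe04} gives $\alpha_n^0\ge c/n$ in the time scale where $\nu^0$ is a probability measure (jump rate $1$), so $\alpha_n\ge c/(\beta n)$.

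\textbf{Mixing-time lower bound.} It remains to show $\tmix^{(n)}(\delta)\gg n\log\log n$; we will in fact get order $n\log n$. Use the number of fixed points, or simply the set $A$ of permutations with at least $n/2$ fixed points. Started from the identity, a point $i$ is untouched up to time $t$ with probability $\exp(-t\sum_{j}\nu(\{i,j\}))\ge\exp(-t\beta\cdot 2/n)$. So at time $t=\frac{n}{4\beta}\log n$ the expected number of untouched points is at least $n^{1/2}$. A second-moment bound (the events for distinct points are nearly independent) shows that at least $n^{1/2}/2$ points are fixed with high probability. Under the uniform law this has vanishing probability, since fixed points are roughly Poisson$(1)$. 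Therefore $\tmix(\delta)\ge c'n\log n/\beta$. Combining with the previous step, $\alpha_n\tmix^{(n)}(\delta)\gtrsim\log n/\beta^2$, while the denominator of \eqref{Eq:CutoffDiscrete} is $O(1+\log\beta+\log\log n)$. The ratio diverges, and cutoff follows from Theorem~\ref{Th:RelaxedDiscrete}.
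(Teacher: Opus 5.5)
Your proposal is correct and follows essentially the same route as the paper: Theorem~\ref{Th:RelaxedDiscrete} with $M$ equal to the bias ratio, the conjugation-reindexing-plus-Jensen argument for approximate curvature, the mLSI comparison with uniform transpositions giving $\alpha_n^{-1}=O(\beta n)$, and a second-moment coupon-collector bound on untouched labels giving $\tmix\gtrsim n\log n$. The only slip is your passing mention of permutations with at least $n/2$ fixed points. That threshold would only give a linear lower bound, and it must be of order $n^{1/2}$, which is what your argument actually uses.
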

We emphasize that this is just one particular illustration of the above theorem, chosen for the sake of concreteness, among several possible alternatives. For example, the same conclusion holds (with the exact  same argument) if the set of transpositions ${\mathcal T}_n$ is replaced with another conjugacy class, as long as  the number $k_n$ of non-fixed elements ($k_n=2$ in the case of transpositions) satisfies $k_n=n^{o(1)}$ as $n\to\infty$. 

\subsection{Hypoelliptic diffusions} \label{Sect_presentation_hypoelliptic}

Hypoelliptic diffusions on $\R^d$ are stochastic processes of the form 
$$dZ_t = b(Z_t)dt + \sigma(Z_t)dB_t$$
where the matrix $\sigma(x)$ has a non-trivial kernel, but where the drift $b(x)$ nonetheless propagates the noise onto the remaining directions. The prototypical example (the only one we shall discuss in details here) is a kinetic diffusion $Z_t = (X_t,V_t) \in \R^d\times \R^d$ where
$$dX_t = V_tdt\;, \hspace{2mm} dV_t = -\nabla U(X_t)dt - V_tdt + \sqrt{2}dB_t.$$
This corresponds to damped Hamiltonian dynamics with additive noise. Here the noise  acts directly on the velocity variable $V$, but propagates onto the position variable $X$ via the Hamiltonian part.  Such processes are highly relevant to modern problems in MCMC simulations, since they allow for accelerated sampling procedures  compared with the more standard overdamped dynamics \cite{Lu25}. Such hypoelliptic diffusions also naturally appear in geometry, as analogues of Brownian motions for sub-Riemannian manifolds \cite{Bau18LN}. 

These processes do not fit in the framework of \cite{Sal25} for two reasons: their curvature in the sense of Bakry-Emery cannot be bounded from below, and moreover they do not satisfy the differential relation between entropy and Fisher information that holds for elliptic diffusion processes. Both issues arise from the fact that the carr\'e du champ operator is \emph{degenerate}, and the associated Dirichlet form not coercive. For example, for the kinetic Fokker-Planck equation, we have
$$\cE(f,f) = \int{|\nabla_v f(x,v)|^2d\mu},$$
which is zero as soon as $f$ depends on the position variable $x$ but not on the velocity variable $v$. Hypocoercive techniques  going back to \cite{HeNi04,Vil09} have been developed to overcome this degeneracy. See for example \cite{Her18} for a short account, as well as the introduction of \cite{BFLS22} and the lecture notes \cite{Vil26} for a survey. We will use ideas from this area to prove cutoff for certain families of kinetic diffusions. Our main abstract result, Theorem \ref{abstract_thm_cutoff_hypoelliptic} below, identifies certain regularization estimates that allow to bound the cutoff window, leading to the following concrete criterion for cutoff of kinetic dynamics: consider a sequence of diffusion processes on $\R^{d_n}\times \R^{d_n}$ of the form 
\begin{equation} \label{eq__kin_dynamic}
    \begin{cases}
    dX^n_t &= V^{n}_tdt \\
    dV^n_t &= -\nabla U^n(X^n_t)dt -  V^n_tdt + \sqrt{2}dB^n_t,
    \end{cases}
\end{equation}
where $U^n$ is normalized so that $e^{-U^n}$ is a probability density (we shall always enforce assumptions that ensure this measure has finite mass). The invariant measure of (\ref{eq__kin_dynamic}) is then $e^{-U^n}\otimes \gamma$, where $\gamma$ is the standard Gaussian law.

\begin{theorem}[Cutoff for kinetic dynamics] \label{thm_kinetic_cutoff_general}
Consider a sequence of dynamics of the form \eqref{eq__kin_dynamic} with the dimension $d_n$ going to infinity. Assume that the initialization is deterministic, and write $R_n$ for its distance to the center of mass $m_n := \int{xe^{-U^n(x)}dx}$. Assume moreover that there exist constants $0 < m \leq M$ such that for all $n$
$$mI_{d_n} \leq \nabla^2U^n \leq M I_{d_n}\;, \hspace{3mm} \sqrt{M}-\sqrt{m} < 1.$$
Then cutoff occurs as soon as ${\rm t}_{\textsc{mix}}^{(n)} \gg \log \log (d_n\vee R_n)$. 
\end{theorem}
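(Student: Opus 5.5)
Our plan is to run the varentropy scheme of Theorem \ref{Th:Varentropy} with the gradient estimate \eqref{Eq:CurvatureNabla} in place of \eqref{Eq:CurvatureC}, controlling the full gradient of $\log f_t$ rather than only its $v$-component. The abstract statement will be Theorem \ref{abstract_thm_cutoff_hypoelliptic}, and Theorem \ref{thm_kinetic_cutoff_general} will follow once its hypotheses are checked for \eqref{eq__kin_dynamic} with dimension-free constants.

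\textbf{Gradient sub-commutation.} We first establish \eqref{Eq:CurvatureNabla} for \eqref{eq__kin_dynamic}, in the form $|\nabla P_t f|^2\le M' e^{-2\rho t}P_t|\nabla f|^2$ with $M',\rho$ depending only on $(m,M)$. Under $mI\le\nabla^2U^n\le MI$ and $\sqrt M-\sqrt m<1$, the synchronous coupling of two copies driven by the same Brownian motion contracts in a twisted Euclidean norm $\|(x,v)\|_A^2=\langle (x,v),A(x,v)\rangle$. The matrix $A$ is built from $I$, $\gamma I$ and the averaged Hessian; this is the classical condition of Dalalyan--Riou-Durand and Monmarché, and the window $\sqrt M-\sqrt m<1$ is exactly where such an $A$ exists. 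Equivalence of $\|\cdot\|_A$ with the Euclidean norm, through constants depending only on $(m,M)$, then gives $W_2$-contraction up to a prefactor. By Kuwada duality this yields the gradient bound.

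\textbf{Local Poincaré and varentropy.} The formula $\Var[f(Z_t)|Z_0]=2\int_0^t P_{t-s}\Gamma P_s f\,ds$ involves only $\Gamma=|\nabla_v\cdot|^2\le|\nabla\cdot|^2$. Combining it with the gradient bound gives
\[
\Var[f(Z_t)|Z_0]\le 2M'\int_0^t e^{-2\rho s}\,ds\;P_t|\nabla f|^2\le \frac{M'}{\rho}\,P_t|\nabla f|^2 .
\]
Taking $f=\log f_t$ then gives $\Varent(f_t)\le C\int f_t|\nabla\log f_t|^2\,d\mu$, the full Fisher information $I(f_t)$. The main obstacle is that $I(f_t)$ is not $\cE(f_t,\log f_t)=\int f_t|\nabla_v\log f_t|^2$, so the entropy dissipation identity behind Theorem \ref{Th:Varentropy} does not apply directly. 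We will reprove the width bound instead. Along the window we have $\Ent(f_t)\approx$ const; Chebyshev on the log-likelihood ratio, with $\Varent$ controlling fluctuations, shows that mixing requires $\Ent$ to drop by order one, and we need the drop to occur quickly.

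\textbf{Regularization.} We will use Villani-type hypocoercive regularization: a modified Fisher information $\tilde I(f)=\int f\,\langle\nabla\log f,S\nabla\log f\rangle$, with $S$ a constant positive matrix, satisfies $\frac{d}{dt}\tilde I(f_t)\le -c\,\tilde I(f_t)$ under the same Hessian conditions. It also satisfies the entropic relation $\frac{d}{dt}\Ent(f_t)=-\cE(f_t,\log f_t)$ together with $\Ent\le C\tilde I$ (log-Sobolev, from $m$-convexity). Since $\tilde I$ decays exponentially, an interval of length $\tau$ where $\Ent$ decreases little forces, via $\Varent\le C\tilde I$ and the decay of $\tilde I$, the law to be concentrated as in \cite{Sal25}. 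This gives $\wmix(\varepsilon)\lesssim C_\varepsilon(1+\log(1+\tilde I(f_{t_0})))$, evaluated at a short time $t_0\sim1$.

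Finally, short-time regularization from a Dirac mass (Hérau/Guillin--Wang type bounds) gives $\tilde I(f_{t_0})\lesssim$ poly$(d_n,R_n)$. Indeed, the Fisher information at time $1$ of the Gaussian-like kernel is $O(d_n)$, and the distance term contributes $O(R_n^2)$. Hence $\wmix=O(\log\log(d_n\vee R_n))$ once logarithms are iterated through the Theorem \ref{Th:Varentropy}-type bound with $\alpha$ of order one. This is $o(\tmix)$ under the stated hypothesis, which gives cutoff. We expect the regularization-versus-entropy step to be the delicate one.
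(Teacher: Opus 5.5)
\textbf{There is a genuine gap, in the regularization step.} Your overall architecture agrees with the paper: reduce to Theorem \ref{abstract_thm_cutoff_hypoelliptic}, then use $W_2$-contraction, Kuwada duality and $\Gamma\le|\nabla|^2$ to get $\Varent(t)\le (M'/\rho)\,I(t)$ with the \emph{full} Fisher information. The regularization step is exactly where the $\log\log$ has to come from, and your argument does not supply it.

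The tools you list are exponential decay of a constant-matrix twisted Fisher information $\tilde I$, $\Ent\le C\tilde I$, $\frac{d}{dt}\Ent=-\cE(f_t,\log f_t)$ and $\Varent\le C\tilde I$. What they actually yield is $\Ent(t)\le Ce^{-c(t-1)}\tilde I(f_1)$, i.e.\ $\tmix(\varepsilon)\lesssim 1+\log\tilde I(f_1)\lesssim\log(d_n\vee R_n)$. Your window bound $C_\varepsilon(1+\log(1+\tilde I(f_1)))$ is therefore of the same order as the mixing time itself, so it can never give $\wmix=o(\tmix)$.

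The sentence ``an interval where $\Ent$ decreases little forces concentration via $\Varent\le C\tilde I$ and the decay of $\tilde I$'' has no proof behind it. A small entropy drop only controls $\int\cE(f_s,\log f_s)\,ds$, which involves velocity derivatives only. Exponential decay relates $\tilde I(t)$ only to $\tilde I$ at earlier times, never to the entropy or its decrease. In fact, all your constraints are compatible with $\Ent(t)\approx\sqrt{\tilde I(f_1)}\,e^{-ct/2}$ on $[1,\tfrac1c\log\tilde I(f_1)]$ with $\tmix(1-\varepsilon)=O(1)$. So no window bound better than $\log\tilde I(f_1)$ can follow from them.

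Nor can you ``iterate logarithms through Theorem \ref{Th:Varentropy} with $\alpha$ of order one''. Since $\cE(f,\log f)=\int f|\nabla_v\log f|^2\,d\pi$ vanishes for non-constant $f=f(x)$, the kinetic dynamics has modified log-Sobolev constant $\alpha=0$. Hence no estimate of the form \eqref{Eq:IDI} is available; this is the very obstruction you identified, and nothing in your argument removes it.

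\textbf{The missing ingredient} is a short-time hypoelliptic regularization that bounds the full Fisher information by the \emph{entropy} at an earlier time:
$I_\pi(P^*_{t+s}\mu)\le\varphi_2(s)\Ent_\pi(P^*_t\mu)$, with $\varphi_2(s)=C/s^3$ and $C$ depending only on the Hessian bounds. This is Villani's hypoelliptic regularization (Theorems A.15/A.18 in \cite{Vil09}). It is proved with a time-weighted functional $\Ent+as\int f|\nabla_v\log f|^2+\dots+cs^3\int f|\nabla_x\log f|^2$, not with a constant matrix $S$.

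Combined with reverse Pinsker and your varentropy bound, it gives the following recursion for $t\ge\tmix(1-\varepsilon)$ and $\Ent(t+s)\ge 4/\varepsilon$:
$\Ent(t+s)^2\le \frac{2M'\varphi_2(s)}{\rho\,\varepsilon^2}\Ent(t)$. Thus $\log\Ent$ is halved, up to an additive constant, every $s$ units of time.

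To start the recursion, use $\Ent(\tmix(1-\varepsilon)+s)\le\Ent(s)\le\varphi_1(s)W_2(\delta_{z_0},\pi)^2$. This is polynomial in $(d_n,R_n)$, by the Guillin--Wang entropic regularization (Lemma \ref{lem_reg_entropy_kinetic}) and $W_2(\delta_{z_0},\pi)^2\le R_n^2+O(d_n)$. The entropy then becomes $O_\varepsilon(1)$ after $O(\log\log(d_n\vee R_n))$ steps, and \eqref{Eq:MixEnt} closes the argument as in Theorem \ref{abstract_thm_cutoff_hypoelliptic}. Note that what feeds this recursion is an entropy bound at a positive time, not the Fisher-information bound at time $1$ that you propose.
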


This is  not the only class of potentials to which Theorem \ref{abstract_thm_cutoff_hypoelliptic} applies (see for example Remark \ref{rmk_hypo_monmarche}): we only wanted to highlight one  concrete situation where the assumptions are relatively easy to state. As an application, we establish the first occurence of a cutoff phenomenon for kinetic mean-field dynamics. 

\begin{corollary}[Non-quadratic convex interactions] \label{Cor:Cutoff_kinetic_mf}
Let \(p\ge1\) be fixed.  For \(n\ge1\), consider on \((\R^p)^n\times(\R^p)^n\) the kinetic dynamics
\[
    \begin{cases}
    d X_t^i = V_t^i\,d t,\\[2mm]
    d V_t^i =
    -\nabla_{x_i}U_n(X_t)\,d t
    -V_t^i\,d t
    +\sqrt2\,d B_t^i,
    \end{cases}
    \qquad 1\le i\le n,
\]
where
\[
    U_n(x_1,\ldots,x_n)
    =
    \sum_{i=1}^n \hat{U}(x_i)
    +
    \frac1{2n}\sum_{i,j=1}^n W(x_i-x_j).
\]
Assume that \(\hat{U},W\in C^2\), that \(W\) is even, and that there exist constants \(0<m_{\hat{U}}\le M_{\hat{U}}<\infty\) and \(L_W<\infty\), independent of \(n\), such that
\[
    m_{\hat{U}} I_p\le \nabla^2 \hat{U} \le M_{\hat{U}} I_p,
    \qquad
    0\le \nabla^2 W\le L_W I_p.
\]
Then, under the perturbative condition
\[
    \sqrt{M_{\hat{U}}+L_W}-\sqrt{m_{\hat{U}}}<1,
\]
 the hypocoercive \(W_2\)-contraction constants are uniform in \(n\).  In particular, the abstract cutoff criterion applies uniformly to this family: if $R_n$ denotes the distance of the deterministic initial condition $(x_{i,n},v_{i,n})_{i=1,\ldots,n}$ to the center of mass then cutoff occurs as soon as
\[
{\rm t}_{\textsc{mix}}^{(n)}\gg \log\log (n \vee R_n).
\]
\end{corollary}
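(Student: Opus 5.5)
The plan is to reduce Corollary \ref{Cor:Cutoff_kinetic_mf} to Theorem \ref{thm_kinetic_cutoff_general}, applied with $d_n = pn$ and the potential $U^n = U_n$ on $(\R^p)^n$. The only substantive task is to find constants $0<m\le M$, independent of $n$, such that $mI_{pn}\le\nabla^2U_n\le MI_{pn}$ and $\sqrt M-\sqrt m<1$. The natural choice is $m=m_{\hat U}$ and $M=M_{\hat U}+L_W$. Since $\log\log(pn\vee R_n)=\log\log(n\vee R_n)+O(1)$ for fixed $p$, the stated condition ${\rm t}_{\textsc{mix}}^{(n)}\gg\log\log(n\vee R_n)$ is then exactly the hypothesis of Theorem \ref{thm_kinetic_cutoff_general}. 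The uniform Hessian bounds are also the input that makes the hypocoercive $W_2$-contraction constants uniform in $n$, as the statement asserts.

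The Hessian computation goes as follows. The confinement part is block diagonal with blocks $\nabla^2\hat U(x_i)$, so it lies between $m_{\hat U}I_{pn}$ and $M_{\hat U}I_{pn}$. For the interaction part, take $\xi=(\xi_1,\dots,\xi_n)\in(\R^p)^n$. Using that $W$ is even, so $\nabla^2W$ is even and the ordered pairs $(i,j)$ and $(j,i)$ contribute equally, one obtains
\[
\Big\langle \xi,\nabla^2\Big(\frac1{2n}\sum_{i,j}W(x_i-x_j)\Big)\xi\Big\rangle=\frac1{2n}\sum_{i,j}\big\langle \xi_i-\xi_j,\nabla^2W(x_i-x_j)(\xi_i-\xi_j)\big\rangle .
\]
This quantity is nonnegative because $\nabla^2W\ge0$. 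For the upper bound, it is at most $\frac{L_W}{2n}\sum_{i,j}|\xi_i-\xi_j|^2=L_W\big(\sum_i|\xi_i|^2-\frac1n|\sum_i\xi_i|^2\big)\le L_W|\xi|^2$. The diagonal terms $i=j$ vanish, so no regularity issue arises at the origin. Combining the two parts gives $m_{\hat U}I\le\nabla^2U_n\le(M_{\hat U}+L_W)I$. The perturbative assumption is then precisely $\sqrt M-\sqrt m<1$.

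There are two remaining checks. First, $e^{-U_n}$ must be normalizable; this follows from strong convexity, after subtracting a constant to normalize. Second, $R_n$ must be the quantity required by Theorem \ref{thm_kinetic_cutoff_general}. That theorem measures the distance of the position component to $m_n$, whereas here the distance is to the center of mass in phase space, whose velocity component is $0$ under $\gamma$. The phase-space distance dominates the position distance, so the criterion only becomes weaker and the theorem still applies.

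I expect no real obstacle beyond this. The most delicate point is to make sure the constant in the upper Hessian bound is exactly $M_{\hat U}+L_W$, with no factor $2$, since the perturbative condition is sharp in these constants. The identity with $\frac1{2n}\sum_{i,j}|\xi_i-\xi_j|^2$ above is designed to secure exactly this. If Theorem \ref{thm_kinetic_cutoff_general} is proved through the abstract Theorem \ref{abstract_thm_cutoff_hypoelliptic}, then any $n$-dependence must enter only through $(m,M)$, $d_n$ and $R_n$, and this is what the argument above supplies.
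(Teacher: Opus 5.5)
Your proposal is correct and follows the paper's proof essentially verbatim: you reduce to Theorem \ref{thm_kinetic_cutoff_general} with $m=m_{\hat U}$ and $M=M_{\hat U}+L_W$, getting the lower Hessian bound from $\nabla^2 W\ge 0$ and the upper bound from $\frac1{2n}\sum_{i,j}|\xi_i-\xi_j|^2=\sum_i|\xi_i-\bar\xi|^2\le|\xi|^2$, exactly as the paper does. Two minor remarks: the quadratic-form identity for the interaction Hessian follows from the chain rule alone and does not use that $W$ is even, and your choice $d_n=pn$ is the correct position dimension, whereas the paper writes $2pn$, which makes no difference under $\log\log$.
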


Note that our assumptions ensure that the confinement potential is convex, so we do not need any high-temperature assumption. The mixing-time condition is easily deduced from known results on propagation of chaos for mean-field dynamics, leading to the following corollary.

\begin{corollary} \label{Cor:Cutoff_kinetic_mf_application}
In the context of the previous theorem, assume that $\hat{U}(x) = (M/2) |x|^2$ for some constant $M>0$, and that $W$ is even and satisfies $0\le \nabla^2 W\le L_W I_p$ with $0 \le L_W < M / ( 4 M + 2)$. Assume further that the initial condition $(x_{i,n},v_{i,n})_{i=1,\ldots,n}$ satisfies $x_{1,n}=\ldots=x_{n,n}$ and $v_{1,n}=\ldots=v_{n,n}$, and that there exists $r>0$ such that $|x_{i,n}| \le r$ and $|v_{i,n}| \le r$ for all $n$. Then cutoff occurs as $n \longrightarrow +\infty$.
\end{corollary}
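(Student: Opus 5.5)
The plan is to deduce the statement from Corollary \ref{Cor:Cutoff_kinetic_mf}. The hypotheses of that corollary hold with $m_{\hat U}=M_{\hat U}=M$, since $\hat U(x)=(M/2)|x|^2$. It then remains to check the perturbative condition, to bound $R_n$, and to show that ${\rm t}_{\textsc{mix}}^{(n)}\gg\log\log n$.

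\emph{Perturbative condition.} We need $\sqrt{M+L_W}-\sqrt M<1$. Since $\sqrt{M+L_W}-\sqrt M\le L_W/(2\sqrt M)$, it suffices that $L_W<2\sqrt M$. This follows from $L_W<M/(4M+2)$: if $M\le 1$ then $L_W<M/2<2\sqrt M$, and if $M>1$ then $L_W<1/4<2$. So the hypocoercive contraction constants are uniform in $n$.

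\emph{Bounding $R_n$.} Because $W$ is even, the potential $U_n$ is invariant under permutations of the particles. Hence the center of mass has the form $(\bar m,\dots,\bar m)$. The interaction term is translation invariant and $\hat U$ is centered at $0$, so by a symmetry/strong convexity argument $|\bar m|$ is bounded (in fact $\bar m=0$ when $W$ is even). Since $|x_{i,n}|,|v_{i,n}|\le r$, we get $R_n\le C\sqrt n$, and therefore $\log\log(n\vee R_n)=O(\log\log n)$.

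\emph{Mixing-time lower bound (main obstacle).} We need ${\rm t}_{\textsc{mix}}^{(n)}(\delta)\ge c\log n$ for some fixed $\delta$. We will use a distinguishing statistic, the empirical mean $\bar X_t=\frac1n\sum_i X^i_t$ together with $\bar V_t$. Since $W$ is even, $\sum_i\nabla_{x_i}$ of the interaction term vanishes. So $(\bar X_t,\bar V_t)$ solves the closed linear SDE
\[
d\bar X=\bar V\,dt,\qquad d\bar V=-M\bar X\,dt-\bar V\,dt+\sqrt{2/n}\,d\bar B,
\]
a Gaussian kinetic Ornstein--Uhlenbeck process started from the deterministic point $(x_{1,n},v_{1,n})$. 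Its law at time $t$ is Gaussian with mean decaying like $e^{-\rho t}$, where $\rho>0$ is the spectral abscissa of the $2\times2$ drift matrix. Its fluctuations are of order $n^{-1/2}$, and at equilibrium the mean is $0$.

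If the initial point is nonzero, the mean stays $\gg n^{-1/2}$ until time $(1/(2\rho))\log n\,(1-o(1))$. Then the total variation distance between the law of $(\bar X_t,\bar V_t)$ and its equilibrium law is close to $1$, and by the contraction of total variation under projection ${\rm t}_{\textsc{mix}}^{(n)}(\delta)\gtrsim\log n$. If the initial point is $(0,0)$, the mean gives no information. In that case we would use instead a second-moment statistic such as $\frac1n\sum_i|X^i_t-\bar X_t|^2$. Propagation of chaos applies here, since for $L_W<M/(4M+2)$ there are uniform-in-time estimates for the kinetic McKean--Vlasov equation. By it, this statistic concentrates at distance $O(n^{-1/2})$ around a deterministic nonlinear-limit value that relaxes exponentially, starting from the degenerate value $0$. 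This again forces a $\log n$ mixing time. The step I expect to be delicate is making the concentration estimate rigorous, uniformly in time up to $c\log n$; this is where the quantitative condition on $L_W$ enters.

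Combining the three steps, ${\rm t}_{\textsc{mix}}^{(n)}\asymp\log n\gg\log\log(n\vee R_n)$, and Corollary \ref{Cor:Cutoff_kinetic_mf} yields cutoff.
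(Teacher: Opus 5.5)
Your checks of the perturbative condition and of $R_n$ are correct. The $R_n$ bound is simpler than the paper's: since $\hat U$ and $W$ are even, $\pi_n$ is invariant under $(x,v)\mapsto(-x,-v)$, so its center of mass is $0$ and $R_n\le r\sqrt{2n}$. The paper instead bounds the center of mass via $W_2(\pi_n,\bar\mu_\infty^{\otimes n})$ from \cite{CLRW24}.

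For the mixing-time lower bound you take a genuinely different route. The paper combines entropic propagation of chaos (with a bound growing like $e^{C_1t}$), a lower bound $d_{TV}(\bar\mu_t,\bar\mu_\infty)\ge Ke^{-ct}$ for the McKean--Vlasov limit, and Hellinger tensorization over $k_n=\sqrt n$ particles, which gives ${\rm t}_{\textsc{mix}}^{(n)}\ge\sqrt{\log n}$. Your observation that $(\bar X_t,\bar V_t)$ is exactly a kinetic Ornstein--Uhlenbeck process with noise $\sqrt{2/n}$ gives ${\rm t}_{\textsc{mix}}^{(n)}\gtrsim\log n$ directly. It needs no propagation of chaos and no identical-start assumption (cf.\ Remark~\ref{Rem:initial_data_mf}), since only the initial empirical mean enters. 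It also works for every $M>0$: bounding the whole mean vector by $|e^{tA}z_0|\ge e^{-\|A\|t}|z_0|$, with $A$ the drift matrix, avoids the scalar test function $(\alpha x+v)\cdot y$ of the paper's lemma. That function is an eigenfunction of the mean dynamics only if $\alpha(1-\alpha)=M$, i.e.\ only for $M\le 1/4$. (Minor point: when $M<1/4$ and $z_0$ lies on the fast eigendirection, the mean decays at the fast rate rather than the slow one; only the order $\log n$ matters.)

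The genuine gap is the degenerate case, which the statement does include.

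First, ``$z_0\neq0$ versus $z_0=0$'' is not the right dichotomy, because $z_{0,n}=(x_{1,n},v_{1,n})$ may depend on $n$. The mean argument needs $\sqrt n\,|z_{0,n}|\,e^{-\|A\|t}\to\infty$ for some $t\gg\log\log n$, so it gives nothing if, for instance, $|z_{0,n}|\le n^{-1/2}$. For $z_{0,n}=0$, the law $\mathcal N(0,\Sigma_t/n)$ of $(\bar X_t,\bar V_t)$ is at an $n$-independent total-variation distance from its equilibrium $\mathcal N(0,\Sigma_\infty/n)$.

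Second, in that case you only sketch an argument, and the step you single out is not the hard one. Concentration of $S=\frac1n\sum_i|X^i_t-\bar X_t|^2$ at scale $n^{-1/2}$ holds uniformly in $t$ without propagation of chaos. Indeed $|\nabla S|^2=4S/n$, so the uniform local Poincaré inequality $\Var[g(X_t)]\le\psi(t)P_t|\nabla g|^2$ with $\sup_t\psi(t)<\infty$, derived from Wasserstein contraction in the proof of Theorem~\ref{abstract_thm_cutoff_hypoelliptic}, gives $\Var_{\mu^n_t}(S)=O(1/n)$; here $\E_{\mu^n_t}S$ is bounded by the same contraction. The Poincaré inequality for $\pi_n$ gives the same at equilibrium.

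What is missing is a lower bound $|\E_{\mu^n_t}S-\E_{\pi_n}S|\ge ce^{-Ct}$, uniform in $n$. Alternatively one could prove the analogous bound for the McKean--Vlasov limit started at $\delta_0$, plus a propagation-of-chaos error that is $o(e^{-Ct})$ at $t\asymp\log n$. Saying the limit value ``relaxes exponentially'' gives an upper bound on this deviation, not a lower bound. Moreover, because of $\nabla W$ the second moments do not satisfy a closed linear system. So a real argument is needed here, e.g.\ a Duhamel comparison with the $W=0$ second-moment dynamics in the spirit of Remark~\ref{rem_non_quadratic_confinement}.

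Note that the paper's proof does not cover this case either, since its lemma on $d_{TV}(\bar\mu_t,\bar\mu_\infty)$ assumes a non-zero initial condition.
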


\begin{remark} \label{Rem:initial_data_mf}
Assuming  that the particles all start from the same position and velocity conveniently allows us to export known results from \cite{CLRW24} on propagation of chaos, that require i.i.d. initial data (at least, to get suitable bounds without new effort). It is likely that this can be relaxed with some extra work on quantitative propagation of chaos for kinetic dynamics, but this is beyond our scope here. 
\end{remark}

The assumption that $U$ is quadratic can be relaxed, as we shall discuss in Remark \ref{rem_non_quadratic_confinement}. 

\section{Proofs}

We start with some generic definitions and estimates used throughout the article. In the sequel, $\Ent(t)$ (resp. $\Varent(t)$) denotes the relative entropy (resp. varentropy) of $f_t$, the density at time $t$ w.r.t. the invariant probability measure $\pi$. We also define
\begin{equation}\label{Eq:IGamma}
	I^\Gamma(t) := \int \Gamma(\log f_t) f_t d\pi\;.
\end{equation}
On $\R^d$ and provided $\Gamma f = |\nabla f|^2$ (up to a multiplicative constant), this coincides (up to a multiplicative constant) with the usual Fisher information
\begin{equation}\label{Eq:I}
	I(t) := \int \frac{|\nabla f_t|^2}{f_t} d\pi\;,
\end{equation}
but in general (in particular for hypoelliptic diffusions) it might differ from the latter. Let us now collect a few estimates on the relative entropy. 

First of all, by the reverse Pinsker's inequality~\cite[Lemma 1]{Sal25}, we have
\begin{equation}\label{Eq:ReversePinsker}
	t \geq \tmix(1-\epsilon)\ \Longrightarrow \ \Ent(t) \leq \frac{1 + \sqrt{\Varent(t)}}{\epsilon}\;.
\end{equation}

Second, we have the following identities:
\begin{lemma}
	Generically
	\begin{equation}\label{Eq:genericDerivEnt}
		\frac{d}{dt}\Ent(t) = \int f_t \cL \log f_t d\pi\;.
	\end{equation}
	Furthermore, if the chain rule \eqref{Eq:ChainRule} holds then
	\begin{equation}\label{Eq:DerivEnt}
		\frac{d}{dt}\Ent(t) = -\int \Gamma(\log f_t) f_t d\pi = - I^\Gamma(t).
	\end{equation}
\end{lemma}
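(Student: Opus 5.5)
The plan is to differentiate the entropy along the flow of densities and then use the chain rule to rewrite the resulting integrand. The density $f_t$ of $X_t$ with respect to $\pi$ evolves under the adjoint semigroup, $f_t = P_t^\star f_0$, where $P_t^\star$ is the adjoint of $P_t$ in $L^2(\pi)$. Its generator is the adjoint $\cL^\star$ of $\cL$ in $L^2(\pi)$. Hence, generically, $\partial_t f_t = \cL^\star f_t$.

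\textbf{First identity.} Differentiating under the integral sign gives
\[
\frac{d}{dt}\Ent(t) = \int (\partial_t f_t)(\log f_t + 1)\,d\pi = \int (\cL^\star f_t)\log f_t\,d\pi + \int \cL^\star f_t\,d\pi .
\]
The last term vanishes: by duality it equals $\int f_t\,\cL 1\,d\pi$, and $\cL 1=0$. This is just conservation of mass. Moving $\cL^\star$ across by duality in the remaining term yields $\int f_t\,\cL\log f_t\,d\pi$, which is \eqref{Eq:genericDerivEnt}. The word ``generically'' in the statement is there to cover the technical points: differentiation under the integral sign, membership $\log f_t\in\cA$, and enough integrability for the duality pairing. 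I would justify these by the standard density/truncation arguments, for example regularising $f_0$ or replacing $f_t$ by $f_t+\eta$ and letting $\eta\to 0$.

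\textbf{Second identity.} I would apply the chain rule \eqref{Eq:ChainRule} with $g=\log f_t$, so that $e^g=f_t$:
\[
\cL f_t = f_t\bigl(\cL\log f_t + \Gamma(\log f_t)\bigr).
\]
Integrating against $\pi$ gives
\[
\int f_t\,\cL\log f_t\,d\pi = \int \cL f_t\,d\pi - \int f_t\,\Gamma(\log f_t)\,d\pi .
\]
Invariance of $\pi$ gives $\int \cL h\,d\pi = 0$ for every $h\in\cA$, so the first term on the right drops. Combined with the first identity, this gives
\[
\frac{d}{dt}\Ent(t) = -\int \Gamma(\log f_t)\,f_t\,d\pi = -I^\Gamma(t),
\]
which is \eqref{Eq:DerivEnt}.

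\textbf{Main obstacle.} The computation itself is immediate, so the only real difficulty is regularity. One must make sure that $f_t$ and $\log f_t$ belong to $\cA$, or at least that $\cL f_t$ and $f_t\,\cL\log f_t$ are integrable, so that the manipulations above are legitimate. This concerns in particular the use of $\int\cL f_t\,d\pi=0$ for a function $f_t$ that may be unbounded. In the smooth diffusion setting I would handle this by positivity and smoothness of $f_t$ for $t>0$, which follows from hypoellipticity, together with a cutoff-function approximation. This is exactly what the qualifier ``generically'' is meant to absorb.
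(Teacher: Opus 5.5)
Your proposal is correct and follows the paper's proof essentially line by line. You differentiate using $\partial_t f_t=\cL^* f_t$, discard the constant term, move $\cL^*$ across by duality, and then apply the chain rule \eqref{Eq:ChainRule} to $\log f_t$, using $\int \cL f_t\,d\pi=0$ by invariance of $\pi$; your justification of the vanishing constant term via $\cL 1=0$ (conservation of mass) is, if anything, slightly more precise than the paper's appeal to invariance at that step.
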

	In the setting of diffusion processes, the second identity, or its equivalent version with $\Gamma(\log f_t) f_t$ replaced by $\Gamma(f_t) / f_t$, is standard~\cite[Prop 5.2.2]{BGL14}. We present a proof of~\eqref{Eq:DerivEnt} that only uses the specific chain rule~\eqref{Eq:ChainRule} (and not the generic chain rule satisfied by diffusion processes \cite[Definition 1.11.1]{BGL14}).
\begin{proof}
	 We denote by $\cL^*$ the adjoint in $L^2(d\pi)$ of $\cL$. Since $\partial_t f_t = \cL^* f_t$ we obtain
	\begin{align*}
		\frac{d}{dt}\Ent(t) =\frac{d}{dt}  \int f_t \log f_t d\pi = \int (\cL^* f_t)(1+ \log f_t) d\pi = \int (\cL^* f_t) \log f_t d\pi,
	\end{align*}
	since $\pi$ is an invariant measure. Then we write
	\begin{align*}
		\int (\cL^* f_t) \log f_t d\pi =\int f_t (\cL \log f_t) d\pi,
	\end{align*}
	which is the first identity of the statement.
	To get the second, we use once again the fact that $\pi$ is an invariant measure, and we rely on the chain rule~\eqref{Eq:ChainRule} to finally get
	\begin{align*}
		\int f_t (\cL \log f_t) d\pi = \int \big(f_t (\cL \log f_t) - \cL f_t\big) d\pi = - \int \Gamma(\log f_t ) f_t d\pi,
	\end{align*}
	as desired.
\end{proof}

We will also use the following mixing-time estimate from~\cite[Lemma 2]{Sal25}:
\begin{equation}\label{Eq:MixEnt}
		\tmix(\epsilon) \leq t + \frac{1 + \Ent(t)}{\lambda \epsilon}\;.
\end{equation}

\subsection{Elliptic diffusions at high temperature}\label{Sec:Elliptic}

\subsubsection{Proof of Theorem \ref{Th:Relaxed}}

The proof very closely follows \cite{Sal25}, so for the parts that are left unchanged we only give a sketch. We set $t_0 = \tmix(1-\epsilon)$. 
By the reverse Pinsker inequality~\eqref{Eq:ReversePinsker}, we have
$$t \geq t_0 \ \Longrightarrow \ \Ent(t) \leq \frac{1 + \sqrt{\Varent(t)}}{\epsilon}.$$
The arguments that led to~\eqref{Eq:VarentGamma} were only based on the approximate curvature bound~\eqref{Eq:RelaxedCurvature}, so that we can write
$$\Varent(t) \leq 2Mt I^\Gamma(t).$$
Therefore
$$t \geq t_0 \ \Longrightarrow \ \Ent(t) \leq \frac{1+ \sqrt{2Mt I^\Gamma(t)}}{\epsilon}$$
which, combined with~\eqref{Eq:DerivEnt}, implies that while $t \geq t_0$ and $\Ent(t) > \epsilon^{-1}$, 
$$\frac{d}{dt}\Ent(t) \leq -\frac{(\epsilon \Ent(t)-1)^2}{2Mt}.$$
Arguing as in \cite{Sal25}, solving this differential inequality yields
$$t \geq t_0 \ \Longrightarrow \ \Ent(t) < \frac{1}{\epsilon} + \frac{2M}{\epsilon^2 \log(t/t_0)}.$$
From the mixing-time estimate~\eqref{Eq:MixEnt} we deduce that for any $t > t_0$
\begin{align*}
\tmix(\epsilon) &\leq t + \frac{1}{\lambda \epsilon} + \frac{1}{\lambda \epsilon^2} + \frac{2M}{\lambda \epsilon^3\log(t/t_0)} \\
&\leq t +  \frac{1}{\lambda \epsilon^3} + \frac{2Mt_0}{\lambda \epsilon^3(t-t_0)}
\end{align*}
where we used $M \geq 1$ and $\epsilon \leq 1/2$. Choosing $t= t_0 + \sqrt{\frac{2Mt_0}{\lambda\epsilon^3}}$ (which optimizes the above upper bound) concludes the proof. 

\subsubsection{Proof of Corollary \ref{Cor:elliptic_high_temp}}

We now concentrate on diffusions in $\R^d$ taking the form
$$dX_t = b(X_t)dt + \sqrt{2T}dB_t\;,$$
for some function $b:\R^d \to \R^d$. We denote by $(P_t)_{t\ge 0}$ its semigroup and by $(P_t^*)_{t\ge 0}$ the dual semigroup, acting on measures. Observe that $\Gamma f = T |\nabla f|^2$.\\

Before establishing Corollary \ref{Cor:elliptic_high_temp}, that concerns the specific situation where $b=-\nabla U$, we start by collecting results that hold without this specific assumption (in particular, reversibility will not be required).

\begin{definition}
Let $\varphi : \R_+ \longrightarrow \R_+$ be a non-increasing function. The semigroup is said to be a Wasserstein contraction with rate $\varphi$ if for any $\mu, \nu \in \mathcal{P}(\R^d)$ we have
$$W_2(P_t^*\mu, P_t^*\nu) \leq \varphi(t)W_2(\mu, \nu) \hspace{3mm} \forall t \geq 0.$$
\end{definition}

In the reversible case where $b=-\nabla U$, the semigroup is a Wasserstein contraction with rate $\varphi(t)= e^{-\lambda t}$ if and only if $U$ is $\lambda$-convex \cite{StVRe05}. Proving such estimates with $\varphi(t)> 1$ in more general settings is one of the applications of hypocoercive techniques \cite{Bau17,Mon23}. The main advantage of this notion is that it is invariant by equivalent change of distances (up to the value of the function $\varphi$). The interest of looking at formulations of rates of decay that allow for an equivalent change of metric was pointed out in \cite{Vil09}, and was exploited in various forms in the literature, among which \cite{Ebe16,EGZ19,BGHKM25,LMM25}. 

It turns out that such Wasserstein distance contraction estimates are \emph{equivalent} to the gradient estimates we are looking for: 

\begin{proposition}[Kuwada duality, \cite{Kuw10}]\label{Prop:Kuwada}
Let $\varphi : \R_+ \longrightarrow \R_+$ be a non-increasing function. The following are equivalent : 
\begin{enumerate}
    \item We have $|\nabla P_t f|^2 \leq \varphi(t)^2P_t(|\nabla f|^2)$ for all $t \geq 0$ and $f$. 
    
    \item The process is a Wasserstein contraction with rate $\varphi$. 
\end{enumerate}
\end{proposition}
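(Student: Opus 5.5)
The plan is to prove the two implications separately. Both rest on the same pointwise duality between Lipschitz-type gradients and couplings. Throughout, $\varphi$ is fixed and $t\ge 0$ is arbitrary. Write $|\nabla g|(x)=\limsup_{y\to x}|g(y)-g(x)|/|y-x|$ for the local slope; for smooth $g$ on $\R^d$ it coincides with the Euclidean gradient norm.

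\emph{(2) $\Rightarrow$ (1).} Fix $x\in\R^d$ and $y$ close to $x$. Apply the contraction to $\mu=\delta_x$ and $\nu=\delta_y$ to get an optimal coupling $(X,Y)$ of $P_t^*\delta_x$ and $P_t^*\delta_y$ with $\E|X-Y|^2\le\varphi(t)^2|x-y|^2$. For smooth $f$, write
\[
f(Y)-f(X)=\int_0^1\nabla f(X+s(Y-X))\cdot(Y-X)\,ds ,
\]
and apply Cauchy--Schwarz to obtain
\[
|P_tf(y)-P_tf(x)|\le \Big(\E\int_0^1|\nabla f|^2(X+s(Y-X))\,ds\Big)^{1/2}\varphi(t)|x-y| .
\]
Now divide by $|x-y|$ and let $y\to x$. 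Since $P_t^*\delta_y\to P_t^*\delta_x$ in $W_2$ (this is exactly what the contraction gives), the coupling concentrates on the diagonal. By continuity of $|\nabla f|^2$, and uniform integrability for bounded $\nabla f$ (this first requires restricting to a good class of $f$, then approximating), the expectation converges to $P_t|\nabla f|^2(x)$. This yields $|\nabla P_tf|^2\le\varphi(t)^2P_t|\nabla f|^2$.

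\emph{(1) $\Rightarrow$ (2).} We use Kantorovich duality in its Hopf--Lax form: $\tfrac12W_2(\mu,\nu)^2=\sup_g\{\int Q_1g\,d\nu-\int g\,d\mu\}$, where $Q_sg(y)=\inf_x\{g(x)+|x-y|^2/(2s)\}$ and $\partial_sQ_sg+\tfrac12|\nabla Q_sg|^2=0$. Following Kuwada, we interpolate. Take a constant-speed geodesic $(\mu_s)_{s\in[0,1]}$ from $\mu$ to $\nu$ with $|\dot\mu_s|=W_2(\mu,\nu)$, and set $D=\varphi(t)W_2(\mu,\nu)$. We then estimate
\[
\int Q_1g\,d(P_t^*\nu)-\int g\,d(P_t^*\mu)
\]
by differentiating $s\mapsto\int P_t(Q_sg)\,d\mu_s$. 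The derivative consists of two terms. The first is $-\tfrac12\int P_t|\nabla Q_sg|^2\,d\mu_s$. The second is bounded by $\int|\nabla P_tQ_sg|\,|\dot\mu_s|$, which by (1) and Cauchy--Schwarz is at most $\varphi(t)W_2(\mu,\nu)\big(\int P_t|\nabla Q_sg|^2d\mu_s\big)^{1/2}$. Applying Young's inequality $ab\le\tfrac12a^2+\tfrac12b^2$ to this product gives a total derivative of at most $\tfrac12D^2$. Integrating over $s\in[0,1]$ and taking the supremum over $g$ yields $\tfrac12W_2(P_t^*\mu,P_t^*\nu)^2\le\tfrac12\varphi(t)^2W_2(\mu,\nu)^2$.

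The main obstacle is making this differentiation rigorous. Hopf--Lax solutions are only Lipschitz, and the derivative along $\mu_s$ must be controlled through the metric-derivative formula $\big|\tfrac{d}{ds}\int h\,d\mu_s\big|\le\|\nabla h\|_{L^2(\mu_s)}|\dot\mu_s|$. I would first reduce to measures $\mu,\nu$ that are finite combinations of Dirac masses, or have bounded support, and to Lipschitz bounded $g$. I would then perform a time discretization of $[0,1]$ that uses only the upper-gradient property of $|\nabla P_tQ_sg|$, and conclude by density of such measures in $W_2$, using lower semicontinuity of $W_2$. This technical step is exactly Kuwada's argument, so in the paper I would cite \cite{Kuw10} for it rather than redo it; $\varphi$ being non-increasing plays no role beyond fixing the statement.
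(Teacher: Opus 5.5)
The paper gives no proof of this proposition and simply quotes it from \cite{Kuw10}. Your sketch is a correct outline of Kuwada's own argument: near-diagonal couplings of $P_t^*\delta_x$ and $P_t^*\delta_y$ for (2)$\Rightarrow$(1), and Hopf--Lax interpolation plus Young's inequality for (1)$\Rightarrow$(2). Deferring the rigorous differentiation of $s\mapsto\int P_t(Q_sg)\,d\mu_s$ to \cite{Kuw10} matches the level of detail the paper itself adopts.
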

This statement is a particular case of a broader duality theorem allowing for $L^p$ Wasserstein distances, but we shall only use the quadratic case here. For our purpose, the main application of this more general form of Wasserstein contraction is that it still implies a local Poincar\'e inequality: 

\begin{proposition}[Local Poincar\'e inequality for Wasserstein contractions] \label{loc_pi_elliptic}
	Let $\varphi : \R_+ \longrightarrow \R_+$ be a non-increasing function and assume that the semigroup is a Wasserstein contraction with rate $\varphi$. Let $\psi(t) = 2\int_0^t{\varphi(s)^2ds}$. 
Then a local Poincar\'e inequality holds : for all $t > 0$, $x \in \R^d$ and locally Lipschitz function $f$
$$P_t(f^2)(x) - (P_t f(x))^2 \leq \psi(t)P_t(|\nabla f|^2)(x) $$
\end{proposition}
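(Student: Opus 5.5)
The plan is to run the classical variance-interpolation argument along the semigroup, replacing the Bakry--\'Emery sub-commutation by the gradient estimate supplied by Kuwada duality. Fix $t>0$, $x\in\R^d$ and a smooth, compactly supported $f$; locally Lipschitz functions are handled at the end by approximation. Introduce the interpolation
$$\Lambda(s) \ :=\ P_s\big((P_{t-s}f)^2\big)(x), \qquad s\in[0,t].$$
It satisfies $\Lambda(t)=P_t(f^2)(x)$ and $\Lambda(0)=(P_tf(x))^2$. Differentiating in $s$, using $\partial_s P_s = P_s\cL = \cL P_s$ and the definition of the carr\'e du champ, gives
$$\Lambda'(s) \ =\ P_s\big(\cL (g_s^2) - 2 g_s\cL g_s\big)(x) \ =\ 2P_s\Gamma(g_s)(x), \qquad g_s:=P_{t-s}f.$$
In the present setting $\Gamma g = T|\nabla g|^2$. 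The time scaling $T$ is harmless: it is absorbed by working with the generator normalized so that $\Gamma=|\nabla\cdot|^2$, i.e.\ by the identification the statement implicitly makes. Integrating over $[0,t]$ then yields the representation
$$P_t(f^2)(x)-(P_tf(x))^2 \ =\ 2\int_0^t P_s\big(|\nabla P_{t-s} f|^2\big)(x)\,ds,$$
which is the formula already quoted from \cite{BGL14} earlier in the paper.

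Next, Proposition~\ref{Prop:Kuwada} converts the assumed Wasserstein contraction with rate $\varphi$ into the pointwise gradient bound $|\nabla P_{t-s} f|^2\le \varphi(t-s)^2 P_{t-s}|\nabla f|^2$. Since $P_s$ is positivity preserving, applying it to this inequality and using the semigroup property $P_sP_{t-s}=P_t$ gives
$$P_s\big(|\nabla P_{t-s}f|^2\big)(x)\ \le\ \varphi(t-s)^2 P_t\big(|\nabla f|^2\big)(x).$$
Integrating in $s$ and changing variables $s\mapsto t-s$ produces exactly the factor $2\int_0^t\varphi(u)^2du=\psi(t)$, which proves the inequality for nice $f$.

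The main obstacle is regularity rather than the algebra. One must justify differentiating $\Lambda$ and exchanging derivatives with $P_s$, which requires $P_{t-s}f$ to lie in a domain stable under $\cL$ and $P_s$ with enough integrability. For smooth compactly supported $f$ and drift of linear growth this follows from standard SDE theory. To pass to locally Lipschitz $f$, I would first truncate and then mollify, giving $f_k\to f$ with $|\nabla f_k|\to|\nabla f|$ almost everywhere and a locally uniform Lipschitz control. The limit is then taken by Fatou's lemma and dominated convergence, using the fact that $P_t(x,\cdot)$ has a density by ellipticity, so that almost-everywhere convergence suffices. If $P_t(|\nabla f|^2)(x)=\infty$, the inequality holds trivially.
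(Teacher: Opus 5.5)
Your argument is the paper's own. Kuwada duality (Proposition~\ref{Prop:Kuwada}) gives $|\nabla P_s f|^2\le \varphi(s)^2P_s|\nabla f|^2$. This bound is inserted into the variance representation $2\int_0^t P_{t-s}\Gamma P_s f\,ds$, which is your $\Lambda$-interpolation after $s\mapsto t-s$, and then integrated. Your regularity discussion is extra detail the paper omits.

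The one step that does not hold up is how you dispose of $T$. Since $\Gamma=T|\nabla\cdot|^2$, the argument actually yields $P_t(f^2)(x)-(P_tf(x))^2\le T\psi(t)\,P_t(|\nabla f|^2)(x)$. A time change cannot remove this factor. Rescaling time so that $\Gamma=|\nabla\cdot|^2$ also replaces the contraction rate $\varphi$ by $\varphi(\cdot/T)$, and $2\int_0^{Tt}\varphi(u/T)^2\,du=T\psi(t)$, so the factor comes straight back.

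The loss is real, not an artifact. Take $dX_t=-X_t\,dt+\sqrt{2T}\,dB_t$ in dimension one, for which $\varphi(t)=e^{-t}$ is an admissible rate. With $f(y)=y$ one gets $\Var(X_t)=T(1-e^{-2t})=T\psi(t)$, while $P_t(|\nabla f|^2)=1$. So the inequality as stated fails whenever $T>1$.

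The paper's proof has the same slip: it writes $\psi(t)\,\E[\Gamma f(X_t)\,|\,X_0=x]=\psi(t)P_t(|\nabla f|^2)(x)$. This is harmless downstream, since the application only uses $\Gamma P_tf\le\varphi(t)^2P_t\Gamma f$. Still, you should state the conclusion with $T\psi(t)$, or restrict to $T\le 1$, rather than assert that $T$ is absorbed.
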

\begin{proof}
	The carr\'e du champ operator is given by $\Gamma f = T |\nabla f|^2$. From the Proposition \ref{Prop:Kuwada}, we deduce that
	$$ \Gamma P_t f \leq \varphi(t)^2 P_t \Gamma f\;,\quad \forall t\ge 0\;.$$
	We can thus follow the arguments presented above~\eqref{Eq:LocalPoincaré} and deduce that
\begin{eqnarray*}
P_t(f^2)(x) - (P_t f(x))^2 & = & \Var\left[f(X_t)\,|\,X_0=x\right] \\
& \le & \psi(t)\,\E\left[\Gamma f(X_t)\,|\, X_0=x\right] = \psi(t)P_t(|\nabla f|^2)(x)\;.
\end{eqnarray*} 
\end{proof}

$L^1$ Wasserstein contractions have also been used in the context of cutoff (the so-called Ollivier-Ricci curvature condition), but they do not imply a local Poincaré inequality by themselves, and hence in the approach of \cite{Sal24} must be used in conjunction with other estimates on varentropy.  
The following result is one of the conclusions in \cite[Theorem 2]{Mon23}, allowing to control the relaxation time, that is, the inverse of the Poincar\'e constant which was defined in~\eqref{Eq:Poincaré}.

\begin{lemma}[$L^2$ hypocoercivity] \label{Lem:relaxation_time_by_contraction}
Assume that the semigroup is a Wasserstein contraction with $\varphi(t) = \sqrt{M}e^{-\varrho t}$, $t\ge 0$, for some constants $M,\varrho > 0$. Then
$$\frac1{\lambda} \leq \frac{MT}{\varrho}\;.$$
\end{lemma}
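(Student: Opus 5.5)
The plan is to show that the Poincaré constant of the invariant law $\pi$ is at least $\varrho/(MT)$, working in four steps.

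\textbf{Step 1 (gradient commutation).} By Proposition \ref{Prop:Kuwada}, the Wasserstein contraction with rate $\varphi(t)=\sqrt{M}e^{-\varrho t}$ is equivalent to the pointwise gradient bound
$$|\nabla P_t f|^2 \le M e^{-2\varrho t} P_t(|\nabla f|^2), \qquad t\ge 0.$$

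\textbf{Step 2 (variance along the semigroup).} Fix a smooth compactly supported (or Lipschitz) $f$. The semigroup interpolation gives
$$\Var_\pi(f) = \int_0^\infty -\frac{d}{dt}\int (P_tf)^2 d\pi\,dt .$$
Since $\pi$ is invariant, $\frac{d}{dt}\int (P_t f)^2 d\pi = 2\int P_tf\,\cL P_tf\,d\pi = -2\int \Gamma(P_tf)\,d\pi$. This uses the identity $\int \cL(g^2)\,d\pi = 0$ together with $\cL(g^2)=2g\cL g+2\Gamma g$, and it does not require reversibility. Two facts are needed here: that $P_tf\to \int f d\pi$ in $L^2(\pi)$, which follows from the contraction (since $W_2(\delta_x P_t,\pi)\to 0$) together with a truncation argument, and that $\int (P_t f)^2 d\pi\to (\int f d\pi)^2$. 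Granting these, we obtain
$$\Var_\pi(f) = 2\int_0^\infty\int \Gamma(P_tf)\,d\pi\,dt = 2T\int_0^\infty\int |\nabla P_t f|^2 d\pi\,dt.$$

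\textbf{Step 3 (plug in the gradient bound).} Using Step 1 and the invariance $\int P_t g\,d\pi=\int g\,d\pi$,
$$\Var_\pi(f)\le 2TM\int_0^\infty e^{-2\varrho t}dt\int|\nabla f|^2d\pi = \frac{MT}{\varrho}\int |\nabla f|^2 d\pi = \frac{M}{\varrho}\,\cE(f,f)\cdot\frac{T}{T}.$$
Here $\int|\nabla f|^2 d\pi = \cE(f,f)/T$, since $\cE(f,f)=\int \Gamma f\, d\pi$ (again by invariance). This yields
$$\Var_\pi(f)\le \frac{M}{\varrho}\,\cE(f,f).$$

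\textbf{Step 4 (reconcile with the stated bound, and the main obstacle).} The bound from Step 3 is in fact $\lambda\ge \varrho/M$, which is stronger than the claimed $1/\lambda\le MT/\varrho$ whenever $T\ge1$. The stated form corresponds to measuring the Poincaré inequality against $\int|\nabla f|^2 d\pi$ rather than $\cE$, or equivalently to the convention of \cite{Mon23}. I would present the computation above, conclude $\Var_\pi(f)\le \frac{MT}{\varrho}\int|\nabla f|^2d\pi$, and state the result in whichever normalization the paper adopts. (If $T<1$, I would note that the stated bound is the $|\nabla f|^2$-normalized version, which is exactly what Step 3 delivers.)

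The main obstacle is the justification in Step 2 in the non-reversible case. This requires three things:
\begin{itemize}
\item showing that $\int \Gamma(P_tf)\,d\pi$ is integrable in time and that the boundary term at infinity vanishes;
\item showing that $f$ lies in a core that is stable under $P_t$;
\item establishing convergence to equilibrium in $L^2(\pi)$ rather than merely in $W_2$.
\end{itemize}
For the third point, the plan is to use $|P_tf(x)-P_tf(y)|\le \|\nabla f\|_\infty\sqrt{M}e^{-\varrho t}|x-y|$, which holds for Lipschitz $f$. Integrating in $y$ against $\pi$ gives $|P_tf(x)-\pi(f)|\le C e^{-\varrho t}\int|x-y|\,d\pi(y)$, which converges to $0$ in $L^2(\pi)$ provided $\pi$ has a finite second moment. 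That moment bound holds because $W_2(\delta_0P_t,\pi)$ is finite. The general case then follows by density of Lipschitz functions.
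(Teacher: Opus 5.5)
Your argument is correct. The paper gives no proof of its own here; it imports the bound from \cite[Theorem 2]{Mon23}. You instead derive it from the contraction hypothesis alone, via Kuwada duality and the interpolation $\Var_\pi(f)=2\int_0^\infty\int\Gamma(P_tf)\,d\pi\,dt$, which has the advantage of making the dependence on $T$ transparent. Your identity is the $\pi$-average of the one behind Proposition~\ref{loc_pi_elliptic}. So an equivalent and lighter way to finish is to let $t\to\infty$ pointwise in $P_t(f^2)(x)-(P_tf(x))^2\le 2T\int_0^t\varphi(s)^2ds\,P_t(|\nabla f|^2)(x)$ for $f\in C_c^\infty$. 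This uses only weak convergence of $\delta_xP_t$ to $\pi$, and it avoids the $L^2(\pi)$-convergence and boundary-term issues you list.

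If you keep your route, note that justifying $\int|x|^2d\pi<\infty$ by ``$W_2(\delta_0P_t,\pi)<\infty$'' is circular. Argue instead that $W_2(\delta_0P_k,\delta_0P_{k+1})\le\sqrt M e^{-\varrho k}W_2(\delta_0,\delta_0P_1)$ is summable. Hence $(\delta_0P_k)_k$ is $W_2$-Cauchy, and its limit $\pi$ has a finite second moment.

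Your Step 4 is also right, and you should state it as a fact rather than hedge. With the paper's convention $\lambda\Var(f)\le\cE(f,f)$ and $\cE(f,f)=T\int|\nabla f|^2d\pi$, your computation proves $1/\lambda\le M/\varrho$. The displayed bound $1/\lambda\le MT/\varrho$ is correct only for the constant normalized against $\int|\nabla f|^2d\pi$. In the $\cE$-normalization it is false when $T<1$. Take the Ornstein--Uhlenbeck process $dX_t=-X_t\,dt+\sqrt{2T}\,dB_t$: synchronous coupling gives contraction with $M=\varrho=1$, while its invariant law $\mathcal N(0,T\,\mathrm{Id})$ gives exactly $1/\lambda=1$, which exceeds $MT/\varrho=T$.

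The slip in the paper is in the last equality of the proof of Proposition~\ref{loc_pi_elliptic}, which drops a factor $T$, since $\psi(t)P_t\Gamma f=T\psi(t)P_t(|\nabla f|^2)$. The proof of Corollary~\ref{Cor:elliptic_high_temp} feeds this lemma into Theorem~\ref{Th:Relaxed}, whose $\lambda$ is $\cE$-normalized. So the input there should be $1/\lambda<32/c$ rather than $32T/c$, and the corollary's stated window bound is justified only for $T\ge 1$.
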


To justify this framework we need concrete non-convex examples where Wasserstein contraction holds. The following is due to Monmarch\'e \cite[Theorem 1]{Mon23}: 
\begin{theorem} \label{thm_abstrait_monmarche}
Consider a diffusion process
$$dX_t = b(X_t)dt + \sqrt{2T}dB_t$$
for some function $b:\R^d \to \R^d$ and let 
$$k(x) := - \sup_{y \neq x} \left\{\frac{\langle (x-y), b(x)-b(y)\rangle}{|x-y|^2} \right\}$$
Assume that there exist $K, R_0 \geq 0$ and $c_0> 0$ such that
$$k(x) \geq - K \hspace{1mm} \forall x \in \R^d\;, \hspace{3mm} k(x) \geq c_0 \hspace{1mm} \text{ if } |x| \geq R_0.$$
Let $R_* := R_0(2 + 2K/c_0)^{1/d}$. Then for 
$$T \geq T_0 := (2K+c_0)\frac{(2K+c_0/2)R_*^2 + 2\sup_{|x| \leq R_*} \{-x \cdot b(x)\}}{c_0d},$$
Wasserstein contraction holds (with respect to the usual Euclidean distance) with $\varphi(t) = \sqrt{M}e^{-\varrho t}$ where
$$M := 1+\frac{(2K+c_0)R_*^2}{2dT}\;, \hspace{3mm} \varrho := \frac{c_0}{4}.$$
\end{theorem}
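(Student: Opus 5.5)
The plan is to prove the contraction with a \emph{synchronous coupling}. This reduces it to a Feynman--Kac bound for a single trajectory, which is where the high temperature enters. Let $X,Y$ solve the SDE with the same Brownian motion $B$, started from an optimal $W_2$-coupling of $(\mu,\nu)$. The noise cancels in the difference, so $|X_t-Y_t|^2$ is absolutely continuous and satisfies
$$\frac{d}{dt}|X_t-Y_t|^2 = 2\langle X_t-Y_t, b(X_t)-b(Y_t)\rangle \le -2k(X_t)|X_t-Y_t|^2,$$
directly from the definition of $k$. Gr\"onwall then gives
$$|X_t-Y_t|^2\le |X_0-Y_0|^2\exp\Big(-2\int_0^t k(X_s)ds\Big).$$
Only the path of $X$ appears on the right. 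Conditioning on $(X_0,Y_0)$, it therefore suffices to show that for every $x\in\R^d$,
$$\E_x\Big[\exp\Big(-2\int_0^t k(X_s)ds\Big)\Big]\le M e^{-2\varrho t},$$
and then to integrate against the optimal coupling, which gives $W_2(P_t^*\mu,P_t^*\nu)^2\le Me^{-2\varrho t}W_2(\mu,\nu)^2$.

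To bound the Feynman--Kac functional, I will look for a Lyapunov-type function $g$ with $1\le g\le M$ and
$$\cL g \le (2k-2\varrho)\,g,\qquad \cL = b\cdot\nabla + T\Delta .$$
Given such a $g$, It\^o's formula shows that $\exp\big(-\int_0^t(2k(X_s)-2\varrho)ds\big)g(X_t)$ is a supermartingale (after the usual localisation). Using $g\ge 1$ at time $t$ and $g\le M$ at time $0$ then yields exactly the bound above. Outside $B(0,R_0)$ we have $2k-2\varrho\ge 2c_0-c_0/2>0$, so it is enough there that $g$ be radial and non-increasing with $\cL g\le 0$. The simplest choice is $g\equiv 1$ for $|x|\ge R_*$. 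Inside, I take a radially decreasing profile such as $g(x)=1+a\,(R_*^2-|x|^2)_+$, possibly smoothed, with $a=(M-1)/R_*^2=(2K+c_0)/(2dT)$, so that $g(0)=M$.

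The key computation uses $\cL|x|^2=2x\cdot b(x)+2dT$. On $|x|\le R_*$ it gives
$$\cL g=-a\big(2x\cdot b+2dT\big)\le -2a\Big(dT-\sup_{|y|\le R_*}\{-y\cdot b(y)\}\Big).$$
This must dominate $(2K+c_0/2)g\le(2K+c_0/2)M$, which is what is needed where $k$ is only $\ge -K$. Substituting $a$ and $M$, the requirement becomes a lower bound on $T$ of the form stated for $T_0$. The factor $(2K+c_0)/(c_0 d)$ and the term $(2K+c_0/2)R_*^2$ come out of matching $2a\,dT$ against $(2K+c_0/2)(1+aR_*^2)$.

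The main obstacle is the transition region $R_0\le|x|\le R_*$ together with the kink at $|x|=R_*$. At the kink, $\cL g$ has a singular part of the right sign (a concave corner), which is admissible for the supermartingale argument via Tanaka/It\^o--Tanaka, or after a concave smoothing. In the transition region, however, the bound $-x\cdot b$ is not controlled by the supremum over the ball. This is where the choice $R_*=R_0(2+2K/c_0)^{1/d}$ should be used. On that annulus $k\ge c_0$ already gives a margin of $2c_0-c_0/2$ times $g$, and the radius is chosen so that the volume (hence the $\Delta$-weight) of $B(0,R_*)$ dominates that of the bad ball $B(0,R_0)$ by the factor $2+2K/c_0$. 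If the naive quadratic profile fails there, I would instead use a profile that is flat on $B(0,R_0)$ and decreasing only on the annulus, solving the radial ODE inequality. Verifying the constants precisely there is the delicate step; everything else follows the synchronous-coupling and supermartingale scheme above.
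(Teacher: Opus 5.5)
Your reduction is sound. With the synchronous coupling, the one-point quantity $k(X_t)$ controls $\frac{d}{dt}|X_t-Y_t|^2$. Any $g$ with $1\le g\le M$ and $\cL g\le(2k-2\varrho)g$ then gives $\E_x[e^{-2\int_0^tk(X_s)ds}]\le Me^{-2\varrho t}$ through the supermartingale $g(X_t)e^{-\int_0^t(2k-2\varrho)(X_s)ds}$. The paper gives no argument beyond citing Monmarch\'e, so everything rests on constructing $g$.

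The $g$ you propose does not work, and not for the reason you give. Your computation with $a=(2K+c_0)/(2dT)$ is valid on the whole ball $B(0,R_*)$, annulus included, because $T_0$ involves $\sup_{|x|\le R_*}\{-x\cdot b(x)\}$. It gives $\cL g\le-(2K+c_0/2)g$ there as soon as $T\ge T_0$, so the transition region is not the difficulty.

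The failure is at $|x|=R_*$. The radial slope of $1+a(R_*^2-|x|^2)_+$ jumps from $-2aR_*$ to $0$, so the corner is \emph{convex}. Hence $\Delta g$ has the positive singular part $2aR_*\,\sigma_{\partial B(0,R_*)}$, and Tanaka's formula adds a non-decreasing local-time term to $g(X_t)$. That term destroys the supermartingale property.

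No concave smoothing can fix this. For a $C^1$ radial $g$ that is constant outside $B(0,R_*)$, the divergence theorem gives $\int_{B(0,R_*)}\Delta g=0$. So the negative Laplacian needed on $B(0,R_0)$ must be compensated by positive Laplacian elsewhere, which is affordable only where $2k-2\varrho\ge 3c_0/2$, i.e.\ on $\{|x|\ge R_0\}$. Demanding $\cL g\le 0$ outside $B(0,R_0)$ is therefore too strict: you must use that positive margin. Your fallback of a profile flat on $B(0,R_0)$ also fails, since there one needs $\cL g\le-(2K+c_0/2)g<0$.

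The missing construction is a radial $g=h(|x|)$ with piecewise constant Laplacian:
$$T\Delta g=-(2K+c_0)\ \text{on } B(0,R_0)\;,\qquad T\Delta g=c_0\ \text{on } \{R_0<|x|<R_*\}\;,\qquad h\equiv 1 \text{ on } [R_*,\infty)\;.$$
Integrating $(r^{d-1}h')'=r^{d-1}\Delta g$ gives
$$h'(r)=-\frac{(2K+c_0)\,r}{dT}\quad(r\le R_0)\;,\qquad h'(r)=\frac{c_0r-2(K+c_0)R_0^d\,r^{1-d}}{dT}\quad(R_0\le r\le R_*)\;.$$
The matching condition $h'(R_*)=0$ is exactly $(R_*/R_0)^d=2+2K/c_0$; this is the precise form of your volume heuristic. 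This $g$ has the following properties:
\begin{itemize}
\item $g\in C^1$, so It\^o's formula applies (e.g.\ after mollification) with no local-time term.
\item $0\le-h'(r)\le(2K+c_0)r/(dT)$, hence $1\le g\le M$.
\item On $B(0,R_*)$ one has $b\cdot\nabla g\le(2K+c_0)S/(dT)$, where $S:=\sup_{|x|\le R_*}\{-x\cdot b(x)\}\ge 0$.
\end{itemize}
Now check the three regions.
\begin{itemize}
\item On $B(0,R_0)$: $\cL g\le-(2K+c_0)(1-S/(dT))$, which is $\le-(2K+c_0/2)M\le(2k-2\varrho)g$ precisely when $T\ge T_0$.
\item On the annulus: $\cL g\le c_0+(2K+c_0)S/(dT)\le 3c_0/2\le(2k-2\varrho)g$, again implied by $T\ge T_0$.
\item Outside $B(0,R_*)$: $\cL g=0$.
\end{itemize}
With this $g$, your synchronous-coupling scheme yields exactly the stated $M$, $T_0$ and $\varrho=c_0/4$.
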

The dimension appears in the expressions. However, for typical examples $R$ is of order $\sqrt{d}$ while $K$ and $c$ are dimension-free, which for Lipschitz drifts $b$ leads to dimension-free admissible values for $T_0$, $M$ and $\lambda$.  The proof of this theorem uses a reflection coupling, building upon work of Eberle \cite{Ebe16}. The use of reflection couplings and contraction estimates to prove local Poincar\'e inequalities was pointed out in \cite{CaGu14}. 

\begin{proof} [Proof of Corollary \ref{Cor:elliptic_high_temp}]
	Recall that we deal with the diffusion process
	$$ dX_t \ =\ -\nabla U(X_t)dt + \sqrt{2T}dB_t\;,$$
	so that $\Gamma f = T |\nabla f|^2$. Assume for now that, provided $T\ge C(K,c) \frac{R^2 + \delta R}{d}$ for some constant $C(K,c)$ only depending on $K$ and $c$, the associated semigroup is a Wasserstein contraction with $\varphi(t) = \sqrt{M} e^{-\varrho t}$ where
	$$ M < 2\;,\quad \varrho := \frac{c}{16}\;.$$
	We aim at applying Theorem \ref{Th:Relaxed}, let us check that the required assumptions are satisfied. The chain rule holds in our setting since we deal with a diffusion process. The Poincaré constant is strictly positive since, by Lemma \ref{Lem:relaxation_time_by_contraction}, the following bound holds
	\begin{equation}\label{Eq:SpecGap}
		\frac1{\lambda} \le \frac{32 T}{c}\;.
	\end{equation}
	Finally we deduce from Proposition \ref{loc_pi_elliptic} that
	$$ \Gamma P_t f = T |\nabla P_t f|^2 \le M T P_t(|\nabla f|^2) = M P_t \Gamma f\;.$$
	so that the approximate curvature assumption is satisfied. By Theorem \ref{Th:Relaxed} we deduce that for all $\epsilon \in (0,1/2)$
	\begin{align*}
		\wmix(\epsilon) &\le \frac{1}{\lambda \epsilon^3} + \sqrt{\frac{8M\tmix(\epsilon)}{\lambda \epsilon^3}}\\
		&\le \frac{32 T}{c \epsilon^3} + 16\sqrt{2} \sqrt{\frac{T \tmix(\epsilon)}{c\epsilon^3}}\;. 
	\end{align*}
	It remains to prove that the semigroup is indeed a Wasserstein contraction with the aforementioned parameters. To that end, we want to use Theorem \ref{thm_abstrait_monmarche} with $b(x) = -\nabla U(x).$ It is immediate to check that since $\nabla^2 U \geq -K$ we also have $k \geq -K$. Moreover, for any $x,y \in \R^d$, the part of the segment linking $x$ and $y$ within $B(0,R)$ has length at most $2R$.  Hence for any $x$ such that $|x| \geq \Big(1+\sqrt{2}(\frac{2K}{c} +1)\Big)R$ and $y \in \R^d$, we have
\begin{enumerate}
\item Either $|x-y| \leq \sqrt{2}(\frac{2K}{c} +1) R$, in which case the segment linking $x$ and $y$ does not intersect $B(0,R)$, and hence $U$ is uniformly convex along it, so that
$$\langle x-y, \nabla U(x)-\nabla U(y)\rangle \geq c|x-y|^2;$$

\item Either $|x-y| > \sqrt{2}(\frac{2K}{c} +1) R$, in which case 
\begin{align*}
\langle x-y, \nabla U(x)-\nabla U(y)\rangle
&\geq -2KR|x-y| + c|x-y|(|x-y|-R)\\
& = c|x-y|^2-(2K+c)R|x-y|\;.
\end{align*}
\noindent Applying Young's inequality on the second term on the R.H.S., we further get
\begin{align*}
\langle x-y, \nabla U(x)-\nabla U(y)\rangle
&\geq \frac{c}{2}|x-y|^2 - \frac{c}{2}\left(\frac{2K}{c} +1\right)^2R^2\\
&\geq \frac{c}{4}|x-y|^2.
\end{align*}
\end{enumerate}
Hence we get $k(x) \geq c/4$ as soon as $|x| \geq (1+2\sqrt{K/c})R$. So we can apply Theorem \ref{thm_abstrait_monmarche} with $c_0 = c/4$ and $R_0 = \Big(1+\sqrt{2}(\frac{2K}{c} +1)\Big)R$. From the explicit expression of $T_0$, we have the bounds
$$ (2K+c_0)(2K+c_0/2)\frac{R_*^2}{c_0d} \leq T_0 \leq C(K,c)\frac{R^2+\delta R}{d} \;,$$
where $C(K,c)$ is a constant that only depends on $K$ and $c$. From the lower bound we see that for such $T$ we have
$$M \leq 1 + \frac{c_0/2}{2K+c_0/2} < 2\;,$$
as claimed.
\end{proof}

\subsubsection{Cutoff for mean-field diffusions} \label{Sect:elliptic_meanfield}

We now study the application to cutoff for overdamped mean-field dynamics. Wasserstein contraction for these dynamics was established in \cite{Mon23}, and will allow us to apply Theorem \ref{Th:Relaxed}. We will then check that the mixing time goes to infinity to deduce cutoff. We consider the state space $\R^{dn}$ with $d$ the (fixed) dimension of single particles, and $n$ the number of particles, which shall go to infinity. The dynamic is given by
$$dX^i_t = F(X^i_t)dt + \frac{1}{n}\sum_{j=1}^n H(X^i_t - X^j_t)dt + \sqrt{2T}dB^i_t\;,\quad i\in\{1,\ldots,n\}\;.$$

\begin{assumption} \label{assumptions_mean_field}
\begin{enumerate}
    \item There exist $R > 0$ and $C_F, c_F > 0$ such that $$\langle(F(x)-F(y)), (x-y) \rangle \leq \begin{cases} C_F|x-y|^2 & \forall x, y \in \R^d \\
    -c_F|x-y|^2 & \forall x, y \in \R^d \text{ s.t. } |x| \geq R \end{cases}$$
    
    \item $H$ is $\Lambda$-Lipschitz and bounded.
    
    \item $2\Lambda < c_F$
\end{enumerate}
\end{assumption}

We define
\begin{equation}
    R_* := R\left(2 + \frac{2(C_F + 2\beta)}{c_F-2\beta}\right)^{1/(2d)}\;,\quad K_* := C_F + \beta + \frac{c_F}{2}
\end{equation}
and 
\begin{equation}
    T_0 := \frac{K_*}{d(c_F-2\beta)}\left(R_*^2(C_F+2\beta) + 4||H\|_\infty R_* + 2\sup_{|x|\leq R_*} -F(x)\cdot x\right).
\end{equation}

\begin{theorem} \label{thm_cutoff_mean_field}
Assume that the diffusion starts from an initial data of the form $(x_0,\ldots,x_0) \in \R^{dn}$ with $|x_0|$ uniformly bounded in $n$. Under Assumption \ref{assumptions_mean_field}, cutoff occurs as soon as $T \geq T_0$ and, for some $\varepsilon \in (0,1)$, ${\rm t}_{\textsc{mix}}^{(n)}(\varepsilon) \to \infty$.

In particular, cutoff occurs if 
$$T \geq T_0 \text{ and } \Lambda \leq \frac{c_F}{2} \exp\Big(-(c_F + C_F)\frac{R^2}{8T}\Big).$$
\end{theorem}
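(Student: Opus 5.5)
The plan is to apply Theorem \ref{Th:Relaxed} to the full $n$-particle system on $\R^{dn}$, with drift $b(x)_i = F(x_i) + \frac1n\sum_j H(x_i-x_j)$. Everything then reduces to two tasks: showing that the constants $M$ and $\lambda$ are uniform in $n$, and showing that the mixing time diverges.

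\textbf{Uniform Wasserstein contraction.} I would invoke the mean-field version of Monmarch\'e's result \cite{Mon23}, which is the particle-system analogue of Theorem \ref{thm_abstrait_monmarche}, with $\beta=\Lambda$. It uses a componentwise reflection coupling together with a distance that is a sum over particles of one-particle concave distances. The interaction is handled as follows. Since $H$ is $\Lambda$-Lipschitz, the coupling drift gap coming from $H$ is bounded by $2\Lambda$ times the average particle distance. Since $2\Lambda<c_F$, this is absorbed into the contraction $c_F$ available outside the ball of radius $R$. Boundedness of $H$ enters only through the term $4\|H\|_\infty R_*$ in $T_0$.

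For $T\ge T_0$, this yields $W_2(P_t^*\mu,P_t^*\nu)\le \sqrt{M}e^{-\varrho t}W_2(\mu,\nu)$ with $M,\varrho$ depending only on $(C_F,c_F,\Lambda,R,d,T)$ and not on $n$. The key structural point is that the distance is additive over particles, so the equivalence constant with the Euclidean $W_2$ on $\R^{dn}$ is the one-particle constant. Proposition \ref{loc_pi_elliptic} then gives \eqref{Eq:RelaxedCurvature} with this $M$. Lemma \ref{Lem:relaxation_time_by_contraction} gives $1/\lambda\le MT/\varrho$, which also holds without reversibility. The chain rule holds because the process is a diffusion. Theorem \ref{Th:Relaxed} therefore bounds $\wmix(\epsilon)$ by $C(1+\sqrt{\tmix(\epsilon)})$ with $C$ independent of $n$, and cutoff follows once $\tmix^{(n)}(\varepsilon)\to\infty$.

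\textbf{Divergence of the mixing time in the particular case.} Here the idea is a lower bound via a distinguishing statistic. Starting from $(x_0,\dots,x_0)$, the particles are approximately i.i.d. with law $\rho_t$, the solution of the nonlinear McKean--Vlasov equation. Under the smallness condition on $\Lambda$, uniform-in-time propagation of chaos holds, with the exponential factor $\exp(-(c_F+C_F)R^2/(8T))$ being the usual reflection-coupling constant. This means the empirical measure stays $O(n^{-1/2})$-close in $W_1$ to $\rho_t$, while under $\pi$ it is close to the nonlinear equilibrium $\rho_\infty$.

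I would therefore take a bounded Lipschitz test function $g$ with $\int g\,d\rho_t$ separated from $\int g\,d\rho_\infty$ for $t\le t_n$, for some sequence $t_n\to\infty$. The statistic $\frac1n\sum_i g(X^i_t)$ concentrates around both values, and so $\tv(X_{t_n})\to1$. Concretely, $\rho_t\to\rho_\infty$ at an exponential rate but not instantaneously, because $\rho_0=\delta_{x_0}\neq\rho_\infty$. Thus for each fixed $t$ the nonlinear distance is positive, and concentration at scale $n^{-1/2}$ gives $\liminf_n \tmix^{(n)}(\varepsilon)\ge t$ for every fixed $t$.

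\textbf{Main obstacle.} I expect the hard step to be the uniform-in-$n$ contraction with an $n$-independent $T_0$. Specifically, one must check that Monmarch\'e's mean-field theorem gives an $L^2$ contraction constant $M$ on the product space that does not degrade with $n$. Two points need care. First, the $L^1$-type reflection coupling has to be upgraded to $W_2$ using the sum-over-particles structure. Second, the nonconvex region must be treated particle-by-particle rather than via a ball in $\R^{dn}$; otherwise $R_*$ would scale like $\sqrt n$.
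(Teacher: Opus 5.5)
Your first step matches the paper's. \cite[Theorem 4.1]{Mon23}, fed with the interaction bound of Lemma \ref{lemma_lip_mean_field}, gives the $n$-uniform $W_2$ contraction you flag as the main obstacle. Proposition \ref{Prop:Kuwada}, Lemma \ref{Lem:relaxation_time_by_contraction} and Theorem \ref{Th:Relaxed} then reduce cutoff to ${\rm t}_{\textsc{mix}}^{(n)}\to\infty$.

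For that divergence you take a genuinely different route. The paper imports propagation of chaos in \emph{total variation} from \cite[Theorem 1.1]{HYY24}, at times $t\ge1$ and at equilibrium, with error $kn^{-1/2+\theta}$ on $k$-marginals. It takes $k=n^{1/4}$ and uses Hellinger tensorization, $\|\bar\rho_t^{\otimes k}-\bar\pi^{\otimes k}\|_{TV}\ge 1-(1-\He(\bar\rho_t,\bar\pi)^2)^k\to1$. No test function is needed, but this strong quantitative input is exactly where the exponential smallness condition on $\Lambda$ comes from.

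Your linear-statistic argument needs much less, in fact less than you invoke. For $G(x)=\frac1n\sum_i g(x_i)$ one has $|\nabla G|^2\le\|g\|_{\rm Lip}^2/n$. So the local Poincar\'e inequality (Proposition \ref{loc_pi_elliptic}) and the uniform Poincar\'e inequality for $\pi^n$ already give $O(1/n)$ variances under both laws. By exchangeability you then only need $\mathrm{Law}(X^1_t)\to\rho_t$ and $\pi^{n,1}\to\rho_\infty$ weakly.
\begin{itemize}
\item The first is a finite-time Sznitman coupling.
\item For the second, the synchronous-coupling bound $W_2(\rho_0^{\otimes n}P^{*}_s,\rho_s^{\otimes n})^2\le C(s)$ is uniform in $\rho_0$ because $H$ is bounded. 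Iterate it over blocks of length $s$ with $\sqrt Me^{-\varrho s}\le\frac12$, against the uniform contraction. This gives $\sup_t W_2(\mathrm{Law}(X_t),\rho_t^{\otimes n})\le C$, hence $W_2(\pi^{n,1},\rho_\infty)=O(n^{-1/2})$. The same comparison shows the nonlinear flow contracts, so $\rho_\infty$ is unique.
\end{itemize}
Modulo the last point below, your route would therefore give ${\rm t}_{\textsc{mix}}^{(n)}\to\infty$ under $T\ge T_0$ alone. This is consistent with the paper's remark that the extra temperature condition is probably unnecessary.

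Three points to fix.
\begin{itemize}
\item The claim that the empirical measure is $O(n^{-1/2})$-close to $\rho_t$ in $W_1$ is false for $d\ge3$, even for i.i.d.\ samples, where the rate is $n^{-1/d}$. Only fluctuations of a \emph{fixed} test function are $O(n^{-1/2})$, and that is all you use.
\item Your attribution of $\Lambda\le\frac{c_F}{2}e^{-(c_F+C_F)R^2/(8T)}$ to reflection-coupling uniform-in-time chaos is unverified. In the paper it is the hypothesis of \cite{HYY24}, whose total-variation bounds on one- and two-particle marginals would equally serve as input to your statistic.
\item ``$\rho_t\to\rho_\infty$ not instantaneously, hence $\rho_t\neq\rho_\infty$ for each fixed $t$'' is a non sequitur. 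You need $\rho_t\neq\rho_\infty$ for \emph{every} $t$, a backward-uniqueness property of the nonlinear flow. The paper also uses this without proof when it asserts $\bar{\rm t}_{\textsc{mix}}(\delta)\to\infty$ as $\delta\downarrow0$.
\end{itemize}
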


As in Remark \ref{Rem:initial_data_mf}, the assumption that all particles start form the same position is only used to apply existing results on propagation of chaos for mean-field dynamics. 

\begin{remark}
We do not believe the second assumption on the temperature is truly necessary for cutoff to occur. It arises as a technical condition to apply the results of \cite{HYY24} on propagation of chaos in total variation distance. The extra condition on $T$ can be replaced by the assumption that propagation of chaos in total variation distance holds for the mean-field particle system. 
\end{remark}

We shall apply \cite[Theorem 4.1]{Mon23} to get Wasserstein contraction estimates, following the reasoning in \cite[Section 4.2]{Mon23}. We consider a more restrictive setting than that of \cite{Mon23}, in order to simplify the exposition and the explicit expressions of various constants. 

\begin{lemma} \label{lemma_lip_mean_field}
	For all $x,y \in \R^{dn}$
	$$\frac{1}{n}\sum_{i,j=1}^n |H(x_i-x_j)-H(y_i-y_j)| |x_i-y_i| \leq 2\Lambda \|x-y\|^2\;.$$
\end{lemma}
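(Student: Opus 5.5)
The plan is to use only the Lipschitz property of $H$, then Cauchy--Schwarz and a symmetrization of the double sum. Write $u_i := x_i - y_i \in \R^d$, so that $\|x-y\|^2 = \sum_i |u_i|^2$. The first step is the pointwise bound
$$|H(x_i-x_j)-H(y_i-y_j)| \ \le\ \Lambda\,|(x_i-x_j)-(y_i-y_j)| \ =\ \Lambda\,|u_i-u_j| \ \le\ \Lambda\,(|u_i|+|u_j|).$$
Substituting this into the left-hand side reduces the claim to estimating
$$\frac{\Lambda}{n}\sum_{i,j=1}^n (|u_i|+|u_j|)\,|u_i| \ =\ \frac{\Lambda}{n}\Big( n\sum_i |u_i|^2 + \Big(\sum_i |u_i|\Big)^2\Big).$$

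The second step handles the cross term. By Cauchy--Schwarz, $\big(\sum_i |u_i|\big)^2 \le n \sum_i |u_i|^2$. Therefore the whole expression is at most $\frac{\Lambda}{n}\cdot 2n\sum_i|u_i|^2 = 2\Lambda\|x-y\|^2$, which is exactly the claimed bound.

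I do not expect a genuine obstacle: the argument is elementary and needs neither boundedness of $H$ nor any symmetry of $H$. The only point to watch is the normalization, namely that the $\frac1n$ prefactor is exactly compensated by the factor $n$ coming from Cauchy--Schwarz. This is what makes the constant $2\Lambda$ independent of $n$, and it is the property that will later matter for uniform Wasserstein contraction constants when applying \cite[Theorem 4.1]{Mon23}.
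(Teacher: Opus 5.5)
Your proposal is correct and follows essentially the same route as the paper's proof. You use the Lipschitz bound with the triangle inequality $|u_i-u_j|\le|u_i|+|u_j|$, expand the double sum, and control the cross term by $\frac1n\big(\sum_i|u_i|\big)^2\le\sum_i|u_i|^2$, which yields $2\Lambda\|x-y\|^2$.
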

\begin{proof}
	Since $H$ is $\Lambda$-Lipschitz
	$$ |H(x_i-x_j)-H(y_i-y_j)| \leq \Lambda \big| (x_i-x_j) - (y_i-y_j) \big| \le \Lambda \big(|x_i-y_i| + |x_j-y_j|\big)\;.$$
	Using the bound $(1/n) (\sum_i z_i)^2 \le \sum_i z_i^2$ we obtain
	\begin{align*}
		\frac{1}{n}\sum_{i,j=1}^n |H(x_i-x_j)-H(y_i-y_j)| |x_i-y_i| &\leq \Lambda \sum_i |x_i-y_i|^2 + \frac{\Lambda}{n} \big(\sum_{i} |x_i-y_i|\big)^2\\
		&\leq 2\Lambda \|x-y\|^2\;.
	\end{align*}
\end{proof}
\begin{proof}[Proof of Theorem \ref{thm_cutoff_mean_field}]
	Assumptions \ref{assumptions_mean_field} and Lemma \ref{lemma_lip_mean_field} allow us to apply \cite[Theorem 4.1]{Mon23} with $C_G = a = 2\Lambda$, and get Wasserstein contraction with $\varphi(t) = \sqrt{M}e^{-\varrho t}$ uniformly in $n$, where
$$\varrho = \frac{dT(c_F-2\Lambda)}{4dT + 8K_*R_*^2}\;,\quad M := 1 + \frac{2K_*R_*^2}{dT}.$$
We aim at applying Theorem \ref{Th:Relaxed}, let us check that the required assumptions are satisfied. By Proposition \ref{Prop:Kuwada} we see that approximate curvature holds with constant $M$.  The chain rule is satisfied since we deal with a diffusion process. By Lemma \ref{Lem:relaxation_time_by_contraction}, we deduce the following lower bound on the Poincar\'e constant:
$$ \lambda > \frac{\varrho}{MT}\;.$$
Therefore by Theorem \ref{Th:Relaxed}, cutoff occurs as soon as ${\rm t}_{\textsc{mix}}^{(n)}(\varepsilon)$ goes to $\infty$ as $n\to\infty$.

To prove that the mixing time goes to infinity, we will rely on the extra condition on temperature, and apply known results on propagation of chaos. Let us denote by $\rho^n_t$ the law of the full system at time $t$ starting from $(x_0,\ldots,x_0)$, $\rho^{k,n}_t $ the law of the first $k$ particles, and $\pi^k_n$ their law at equilibrium. 
We also denote $\bar{\rho}_t$ the solution to the McKean-Vlasov PDE that describes the mean-field limit: 
$$\partial_t\bar{\rho}_t = \Delta \bar{\rho}_t + \operatorname{div}(\bar{\rho}_t(F + \bar{\rho}_t \ast H)$$
and $\bar{\pi}$ its invariant probability measure. We have
\begin{align*}\|\rho^n_t -\pi^n\|_{TV} &\geq \|\rho^{k,n}_t - \pi^k_n\|_{TV} \\
&\geq -\|\rho^{k,n}_t - \bar{\rho}_t^{\otimes k}\|_{TV} + \|\bar{\rho}_t^{\otimes k} - \bar{\pi}^{\otimes k}\|_{TV} - \|\bar{\pi}^{\otimes k} - \pi^k_n\|_{TV}
\end{align*}
For any $\theta \in (0, 1/2)$, applying \cite[Theorem 1.1]{HYY24} with $\delta/(1+\delta) = 1/2 - \theta$ (and some elementary computations to check the assumptions), whenever (recall that $\Lambda$ is an upper-bound of the Lipschitz semi-norm of $H$)
$$\Lambda \leq \frac{c_F}{2} \exp\Big(-(c_F + C_F)\frac{R^2}{8T}\Big),$$
we have propagation of chaos in total variation distance, both at time $t \geq 1$ and at equilibrium:  
\begin{align*}
	\|\bar{\pi}^{\otimes k} - \pi^k_n\|_{TV} &\leq C(1+|x_0|)kn^{-1/2+\theta}\;,\\
	\|\rho^{k,n}_t - \bar{\rho}_t^{\otimes k}\|_{TV} &\leq C(1+|x_0|)kn^{-1/2+\theta}\;.
\end{align*}
Hence
$$\|\rho^n_t -\pi^n\|_{TV} \geq \|\bar{\rho}_t^{\otimes k} - \bar{\pi}^{\otimes k}\|_{TV} - CkN^{-1/2 + \theta}.$$
Taking $\theta = 1/8$ and $k = n^{1/4}$, we deduce that for all $t\ge 1$
$$\liminf_n \|\rho^n_{t} -\pi^n\|_{TV} \geq \liminf_k  \|\bar{\rho}_t^{\otimes k} - \bar{\pi}^{\otimes k}\|_{TV}\;.$$
If we show that for any given $t\ge 1$, the r.h.s.~goes to $1$ then we will deduce that ${\rm t}_{\textsc{mix}}^{(n)}(\varepsilon)$ goes to $\infty$ as $n\to\infty$.
Denote by $\He(\nu,\mu)$ the Hellinger distance between two probability measures $\nu$ and $\mu$. Recall that
$$ \He(\nu,\mu)^2 \le \|\nu-\mu\|_{TV} \le \He(\nu,\mu)\;,$$
and
$$ \He(\otimes_{i=1}^k \nu_i,\otimes_{i=1}^k \mu_i)^2 = 1 - \prod_{i=1}^k (1-\He(\nu_i,\mu_i)^2)\;.$$
We thus get
$$ \|\bar{\rho}_t^{\otimes k} - \bar{\pi}^{\otimes k}\|_{TV} \ge \He(\bar{\rho}_t^{\otimes k} , \bar{\pi}^{\otimes k})^2 = 1 - (1-\He(\bar{\rho}_t , \bar{\pi})^2)^k\;.$$
As a consequence, if we take $t = \bar{\rm t}_\textsc{mix}(\delta)$ for some constant $\delta>0$ and where $\bar{\rm t}_\textsc{mix}$ stands for the mixing time of $\bar{\rho}$ then
$$ \liminf_k \|\bar{\rho}_t^{\otimes k} - \bar{\pi}^{\otimes k}\|_{TV} = 1\;.$$
Since $\bar{\rm t}_\textsc{mix}(\delta) \to \infty$ as $\delta \downarrow 0$, we can conclude.
\end{proof}

\subsubsection{An alternative approach : regularity estimates and transport maps}

Our goal in this subsection is to present an alternative approach for proving local Poincar\'e inequalities for diffusion processes, using transport maps and regularity estimates for semigroups. It builds up on a construction of Lipschitz transport maps due to Kim and Milman \cite{KiMi12}, which was revisited in various recent works \cite{MiSh23,FMS24,CoEi25}. It has also been the subject to recent attention due to the connection with diffusion-based generative models \cite{SSKKEP21,CPZ26}. We once again consider a diffusion process of the form
$$dX_t = -\nabla U(X_t)dt + \sqrt{2}dB_t.$$
We also assume that $e^{-U}$ is a probability density. 

Let $p_t(x,\cdot)$ be the density with respect to the equilibrium probability measure $e^{-U}$ of the law of $X_t$ starting from $x$. It satisfies the PDE
$$\partial_t(p_t\pi) = \operatorname{div}(\pi\nabla p_t).$$ 
By the chain rule, this is also
$$\partial_t(p_t\pi) = \operatorname{div}(p_t\pi(\nabla \log p_t)),$$
and hence the flow of marginals of this PDE is also the law of the solution to the ODE with random initial data
$$\dot{y}_s = -\nabla \log p_t(x,y_s); \hspace{3mm} y_0 = x.$$
As was observed by Kim and Milman \cite{KiMi12}, we can therefore construct a transport map from the equilibrium measure $\pi$ onto the law at time $t$ by solving this ODE. Indeed, if $S_{t,s}(y)$ is the semigroup obtained by running the above ODE (assuming it is well-defined and has unique solutions) for a time $t$, and if $S_{t,\infty}$ is invertible, then $S_{t,\infty}^{-1}$ is a transport map from $\pi$ onto $p_t(x,\cdot)\pi$. Moreover, we can use regularity properties of $\log p_t$ to deduce regularity properties of the flow, and its inverse. We shall use the following version, which is an immediate consequence of \cite[Proposition 1]{FMS24} (itself a straightforward adaptation of \cite{KiMi12}). 

\begin{theorem}
Assume that $\log p_t$ is $C^2$ and satisfies the one-sided Hessian bound
$$\nabla^2_y \log p_t(x,y) \leq \beta_t \operatorname{Id}$$
in the sense of symmetric matrices. Then for any $t$ and any $x$, the probability measure $p_t(x,y)d\pi(y)$ is the image of the invariant measure $\pi$ by a $\exp\left(\int_t^\infty{\beta_sds}\right)$-Lipschits map. 
\end{theorem}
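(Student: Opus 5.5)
The plan is to follow the Kim--Milman heat-flow construction, running the density ODE backwards from equilibrium to the law at time $t$, and to control its Lipschitz constant through the Hessian bound.

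\textbf{Step 1: set up the flow.} Fix $x$. For $s\ge t$, write $q_s(y):=p_s(x,y)$. By the Markov property, $(q_s)_{s\ge t}$ solves the Fokker--Planck equation in divergence form,
$$\partial_s(q_s\pi)=\operatorname{div}\big(q_s\pi\,\nabla\log q_s\big).$$
This is the continuity equation for the velocity field $v_s(y)=-\nabla\log q_s(y)$. Let $S_{t,s}$ denote the flow of the ODE $\dot y_s = -\nabla\log q_s(y_s)$, started at time $t$. Standard theory for continuity equations then gives $(S_{t,s})_\#(q_t\pi)=q_s\pi$.

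\textbf{Step 2: pass to the limit $s\to\infty$.} We have $q_s\to 1$, since the law converges to $\pi$. One shows that $S_{t,s}$ converges locally uniformly to a map $S_{t,\infty}$ satisfying $(S_{t,\infty})_\#(q_t\pi)=\pi$. It then suffices to show that the inverse map $T:=S_{t,\infty}^{-1}$ is Lipschitz with the stated constant, because $T_\#\pi=q_t\pi$.

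\textbf{Step 3: bound the Jacobian of the inverse flow.} Differentiating the ODE in the initial condition gives $\partial_s DS_{t,s}=-\nabla^2\log q_s(S_{t,s})\,DS_{t,s}$. For a vector $w$, set $w_s:=DS_{t,s}w$. Then
$$\frac{d}{ds}|w_s|^2=-2\langle w_s,\nabla^2\log q_s\,w_s\rangle\ \ge\ -2\beta_s|w_s|^2,$$
which yields the lower bound $|w_s|\ge e^{-\int_t^s\beta_r\,dr}|w|$. Hence the flow expands lengths at least by this factor, and its inverse $S_{t,s}^{-1}$ is $\exp(\int_t^s\beta_r\,dr)$-Lipschitz. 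Letting $s\to\infty$, using that a locally uniform limit of uniformly Lipschitz maps is Lipschitz with the same constant, gives the claimed bound $\exp(\int_t^\infty\beta_s\,ds)$.

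\textbf{Main obstacle.} The delicate part is justifying Steps 1--2: well-posedness and invertibility of the flow, and the convergence as $s\to\infty$ (tightness, and identification of the pushforward of the limit). Only the one-sided bound on $\nabla^2\log q_s$ is available, so growth of $\nabla\log q_s$ must be controlled separately. The most economical route is to appeal directly to \cite[Proposition 1]{FMS24} and \cite{KiMi12}, which carry out exactly these regularity steps. What remains is to check that their hypotheses reduce to the stated Hessian bound, after the time reparametrization in which their time variable starts at $t$.
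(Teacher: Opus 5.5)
Your proposal is correct and follows the same route as the paper. You use the Kim--Milman flow $\dot y_s=-\nabla\log p_s(x,y_s)$ started at time $t$ rather than at $0$, and the one-sided Hessian bound, via Gr\"onwall on $DS_{t,s}$, gives the $\exp\bigl(\int_t^\infty\beta_s\,ds\bigr)$-Lipschitz bound on the inverse map; like the paper, you correctly defer well-posedness and the $s\to\infty$ limit to \cite[Proposition 1]{FMS24}, applied to the initial density $p_t(x,\cdot)$, whose evolution $P_s\,p_t(x,\cdot)=p_{t+s}(x,\cdot)$ (by reversibility) turns their hypothesis into exactly the stated bound with $\beta_{t+s}$.
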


In the literature, Lipschitz regularity of such transport maps is typically analyzed for nicer initial data than Dirac masses. However, it is always studied up to time zero, which complicates the analysis, since the short time behavior of $||\nabla^2 \log p_t(x,\cdot)||_{op,\infty}$ might not be integrable for non-smooth initial data. Here we only need to estimate it starting at some positive time, which significantly simplifies the analysis. 

The key application to cutoff is that Lipschitz transport maps allow one to transfer Poincar\'e inequalities: 

\begin{lemma}
Let $T$ be a transport map sending $\mu$ onto $\nu$. Then
$$C_P(\nu) \leq ||T||_{lip}^2C_P(\mu).$$
\end{lemma}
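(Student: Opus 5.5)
The plan is the standard pullback argument. Let $C_P(\mu)$ be the optimal constant in $\Var_\mu(g)\le C_P(\mu)\int|\nabla g|^2\,d\mu$ for locally Lipschitz $g$. We want to bound $\Var_\nu(f)$ for a locally Lipschitz $f$ with $\int|\nabla f|^2d\nu<\infty$. If $\|T\|_{lip}=\infty$ there is nothing to prove, so we assume $T$ is $L$-Lipschitz with $L=\|T\|_{lip}$.

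\textbf{Step 1 (transfer the variance).} Since $T_\#\mu=\nu$, the change of variables formula gives $\Var_\nu(f)=\Var_\mu(f\circ T)$. The function $g=f\circ T$ is locally Lipschitz, being a composition of a locally Lipschitz map with a Lipschitz map. Applying the Poincaré inequality for $\mu$ gives
\[
\Var_\nu(f)\le C_P(\mu)\int|\nabla (f\circ T)|^2\,d\mu .
\]

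\textbf{Step 2 (bound the gradient).} We need the pointwise bound $|\nabla(f\circ T)|(x)\le L\,|\nabla f|(T(x))$. It is cleanest to work with local Lipschitz constants (upper gradients) $|\nabla g|(x):=\limsup_{y\to x}|g(y)-g(x)|/|y-x|$. For a point $y$ near $x$, write
\[
\frac{|f(T y)-f(Tx)|}{|y-x|}\le \frac{|f(Ty)-f(Tx)|}{|Ty-Tx|}\cdot L .
\]
As $y\to x$ we have $Ty\to Tx$, so taking the limsup gives the bound. When $Ty=Tx$ the left-hand side vanishes, so that case is trivial. For smooth $f$ and differentiable $T$ the same bound is simply $|DT^{\top}\nabla f|\le \|DT\|_{op}|\nabla f|\le L|\nabla f|$.

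\textbf{Step 3 (push forward again).} Integrating the bound from Step 2 and using the pushforward once more,
\[
\int|\nabla f|^2(T(x))\,d\mu(x)=\int|\nabla f|^2\,d\nu .
\]
Hence $\Var_\nu(f)\le L^2C_P(\mu)\int|\nabla f|^2d\nu$. Since $f$ was arbitrary, the optimal constant satisfies $C_P(\nu)\le L^2C_P(\mu)$.

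The only delicate point is Step 2 in the non-smooth setting: the map $T$ is merely Lipschitz and $f$ merely locally Lipschitz, so the chain rule is not available. Using the metric slope, as above, avoids needing differentiability of $T$. By Rademacher's theorem the metric slope agrees a.e. with the Euclidean gradient norm, so the resulting inequality is the usual Poincaré inequality. Alternatively, one can first restrict to smooth compactly supported $f$ and pass to the limit by density.
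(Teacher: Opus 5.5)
Your proof is correct and is precisely the change-of-variables argument the paper has in mind (it simply refers to \cite[Corollary 8]{Caf00}): pull $f$ back along $T$, apply the Poincaré inequality for $\mu$ to $f\circ T$, use $|\nabla(f\circ T)|\le \|T\|_{lip}\,|\nabla f|\circ T$, and push forward again. The only fine print is that Rademacher's identification of the metric slope with the Euclidean gradient holds Lebesgue-a.e.\ only, so it carries over $\mu$- and $\nu$-a.e.\ when these measures are absolutely continuous (as $\pi$ and $p_t(x,\cdot)\pi$ are in the paper's application); alternatively, define $C_P$ with the metric slope throughout.
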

Here $C_P(\mu)$ denotes the sharp constant in the Poincar\'e inequality
$$\Var_{\mu}(f) \leq C\int{|\nabla f|^2d\mu} \hspace{3mm} \forall f \text{ s.t. } |\nabla f| \in L^2(\mu).$$
The proof of this lemma is a completely straightforward change of variable argument, see for example \cite[Corollary 8]{Caf00}. 

\begin{corollary}
Let $\alpha_n := \sup_x \int_{t_{rel}}^{+\infty}{||\nabla^2 \log p^{(n)}_t(x,\cdot)||_{op,\infty}dt}$. If $\alpha_n$ and $t_{rel}^{(n)}$ are finite for all $n$, then the cutoff window satisfies
$$w_{mix}(\epsilon) \leq \frac{3\alpha_n^2 t_{rel}^{(n)}}{\epsilon^2}.$$
In particular, cutoff occurs as soon as 
$$\frac{t_{mix}}{\alpha_n^2 t_{rel}} \longrightarrow +\infty.$$
\end{corollary}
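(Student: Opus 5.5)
The plan is to avoid any curvature-type bound and work directly with Theorem \ref{Th:Varentropy}'s ingredients at a single time: the reverse Pinsker inequality \eqref{Eq:ReversePinsker} and the mixing-time estimate \eqref{Eq:MixEnt}. Set $t_0:=\tmix(1-\epsilon)$. Applying \eqref{Eq:MixEnt} at $t=t_0$ with $\lambda^{-1}=t_{rel}$ gives
$$\wmix(\epsilon)\ \le\ \frac{t_{rel}}{\epsilon}\bigl(1+\Ent(t_0)\bigr),$$
and \eqref{Eq:ReversePinsker} gives $\Ent(t_0)\le (1+\sqrt{\Varent(t_0)})/\epsilon$. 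So the whole statement reduces to a uniform varentropy bound of the form $\Varent(t)\lesssim \alpha_n^2$ for $t\ge t_{rel}$. We may assume $t_0\ge t_{rel}$: otherwise $\tmix(\epsilon)\le t_{rel}+t_{rel}(1+\Ent(t_{rel}))/\epsilon$, and the same bound applied at $t_{rel}$ does the job. Inserting $\Varent\le \alpha_n^2$ (and using $\alpha_n\ge 1$ wlog, $\epsilon<1/2$) gives
$$\wmix(\epsilon)\le \frac{t_{rel}}{\epsilon}\Bigl(1+\frac{1+\alpha_n}{\epsilon}\Bigr)\le \frac{3\alpha_n^2t_{rel}}{\epsilon^2}.$$
The cutoff criterion then follows by dividing by $\tmix$.

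The key step is the bound $\Varent(t)\le \alpha_n^2$ for $t\ge t_{rel}$, with a polynomial rather than exponential dependence on $\alpha_n$. First, write $\Varent(t)=\Var_{p_t\pi}(\log p_t)$ and pull it back to $\pi$ through the Kim--Milman map $\Phi_t=S_{t,\infty}^{-1}$, which pushes $\pi$ onto $p_t(x,\cdot)\pi$. This gives $\Varent(t)=\Var_\pi(h_t)$ with $h_t:=\log p_t\circ\Phi_t$. Second, apply the Poincaré inequality of $\pi$ itself, which costs only $t_{rel}$ and does not go through a transported Poincaré constant (the transported route is what produces $e^{2\alpha_n}$). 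This yields $\Varent(t)\le t_{rel}\int|\nabla h_t|^2d\pi$. The remaining task is to show that $\int|\nabla h_t|^2d\pi\lesssim \alpha_n^2/t_{rel}$. For this I would differentiate along the flow $s\mapsto S_{t,s}$: the derivative of $\log p_s$ along trajectories involves $\nabla^2\log p_s$ acting on $\nabla\log p_s$. Integrating from $s=t$ to $\infty$, where $\log p_\infty=0$, a Gronwall argument should bound $|\nabla h_t|$ by $\int_t^\infty\|\nabla^2\log p_s\|_{op,\infty}ds\le\alpha_n$ times a factor controlled by the Fisher information. Since $\int_{t_{rel}}^\infty I(s)\,ds=\Ent(t_{rel})$, that factor should be absorbed.

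The main obstacle is exactly this Gronwall step. A crude estimate of the Jacobian of $\Phi_t$ gives back $e^{\alpha_n}$, so the argument must exploit that only the first-order variation of $\log p$ along the flow matters, which is linear in the Hessian, rather than the full Lipschitz constant of the map. If this pointwise route fails, my fallback is to bound $\Varent(t)$ by writing $\log p_t=-\int_t^\infty\partial_s\log p_s\,ds$ and applying Cauchy--Schwarz in $s$. This produces the square of an integral of Hessian norms, which is the natural source of the $\alpha_n^2$ in the statement.
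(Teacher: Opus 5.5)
The gap is in your key step. The single-time reduction through \eqref{Eq:MixEnt} and \eqref{Eq:ReversePinsker} is valid, but it needs an absolute bound $\Varent(t)\lesssim\alpha_n^2$ for all $t\ge t_{rel}$, and that bound is false. So neither your Gronwall route nor the fallback can produce it. A bound on $\nabla^2\log p_t$ does not control the size of $\nabla\log p_t$, and that size is exactly what varentropy and Fisher information measure. Take the Ornstein--Uhlenbeck process $dX_t=-X_t\,dt+\sqrt2\,dB_t$ on $\R^d$ started at $x$. There $\nabla_y^2\log p_t=-\frac{e^{-2t}}{1-e^{-2t}}\mathrm{Id}$, so $\alpha_n=-\frac12\log(1-e^{-2})\approx0.073$ for every $d$ and $x$, whereas
\[
\Varent(t)=(1-e^{-2t})e^{-2t}|x|^2+\frac d2e^{-4t},\qquad I(t)=e^{-2t}|x|^2+\frac{e^{-4t}}{1-e^{-2t}}\,d
\]
are unbounded at $t=t_{rel}=1$. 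Consequently:
\begin{itemize}
\item The identity $\int_{t_{rel}}^\infty I(s)\,ds=\Ent(t_{rel})$ absorbs nothing, since $\Ent(t_{rel})$ is itself unbounded.
\item The fallback fails because $\partial_s\log p_s=\cL\log p_s+|\nabla\log p_s|^2$ contains $\Delta\log p_s$, which carries a factor $d$ in front of the Hessian norm, and first-order terms that no Hessian bound controls.
\item ``$\alpha_n\ge1$ wlog'' is not legitimate, because the target bound gets stronger as $\alpha_n$ decreases.
\end{itemize}
What your pull-back plus the Poincar\'e inequality of $\pi$ does correctly give is $\Varent(t)\le\|\Phi_t\|_{lip}^2\,t_{rel}\,I(t)$. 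This is a \emph{relative} inequality, and it is precisely the local Poincar\'e inequality on which the paper's proof rests.

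The missing idea is to use this relative inequality dynamically, as in \cite{Sal25}. The paper argues as follows:
\begin{itemize}
\item The Kim--Milman map and the transport lemma give, for all $t\ge t_{rel}$, a Poincar\'e constant $C\le L_n^2\,t_{rel}$ for $p_t(x,\cdot)\pi$, where $L_n$ uniformly bounds the Lipschitz constants of the maps.
\item Apply this to $\log f_t$ and combine with $\frac{d}{dt}\Ent(t)=-I(t)$ and \eqref{Eq:ReversePinsker}. This gives $\frac{d}{dt}\Ent\le-(\epsilon\Ent-1)^2/C$ after $t_0$, hence $\Ent(t_0+s)\le\frac1\epsilon+\frac{C}{\epsilon^2s}$.
\item Apply \eqref{Eq:MixEnt} at $t_0+s$ with $s\asymp\sqrt{C\,t_{rel}}\,\epsilon^{-3/2}$. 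This is Theorem \ref{Th:Varentropy} with constant $\psi$.
\end{itemize}
It is the entropy dissipated on $[t_0,t_0+s]$ that turns the relative inequality into a bound; evaluating everything at $t_0$ throws it away.

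On the constant, your suspicion is right. The Lipschitz transport theorem preceding the corollary only gives $L_n\le e^{\alpha_n}$, so the argument yields $e^{2\alpha_n}t_{rel}$. The paper's ``local Poincar\'e constant $\alpha_n^2t_{rel}$'' is consistent only if $\alpha_n$ denotes the Lipschitz constant itself. Read literally, with $\alpha_n$ the Hessian integral, the statement fails, so no polynomial-in-$\alpha_n$ proof exists. Two checks:
\begin{itemize}
\item For OU with $t\ge1$, $p_t(x,\cdot)\pi$ has Poincar\'e constant $1-e^{-2t}\ge0.86$, versus $\alpha^2t_{rel}\approx0.005$.
\item For one-dimensional OU started at $x\to\infty$, the total-variation distance at time $\log|x|+c$ tends to $2F(e^{-c}/2)-1$, with $F$ the standard Gaussian distribution function. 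This gives $\wmix(0.4)\to\log\frac{F^{-1}(0.8)}{F^{-1}(0.7)}\approx0.47$, while $3\alpha^2/\epsilon^2\approx0.10$.
\end{itemize}
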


In particular, the product condition implies cutoff for sequences of drift-diffusion SDEs as long as $\alpha_n$ remains bounded.

The proof is once again a trivial adaptation of \cite{Sal25}, just using the fact that for any $t \geq t_{rel}$ a local Poincar\'e inequality holds with constant $\alpha_n^2t_{rel}$ (since $t_{rel}$ is also the Poincar\'e constant for $\pi$, as the inverse of the spectral gap). 

\begin{remark}
This type of result can be extended to diffusion processes with non-constant diffusion matrices, as well as to diffusions on manifolds. The key estimate to be proved is to study lipschitz regularity of the diffusion transport map. See for example \cite{FMS24,Lop25} for some results (though with non-Dirac initial data) on manifolds satisfying certain curvature type assumptions.
\end{remark}

\begin{example}[Ornstein-Uhlenbeck process]
Consider the Ornstein-Uhlenbeck process in $\R^d$. We have $t_{rel} = 1$ and
$$p_t(x,y) = (1-e^{-2t})^{-d/2}\exp\left(\frac{|y|^2}{2}-\frac{|e^{-t}x-y|^2}{2(1-e^{-2t})}\right).$$
Hence 
\begin{align*}
\int_{t_{rel}}^{+\infty}{||\nabla^2 \log p^{(n)}_t(x,\cdot)||_{op,\infty}dt} &= \int_1^{+\infty}{\frac{e^{-2t}}{1-e^{-2t}}dt} \\
&\frac{-\log(1-e^{-2})}{2}.
\end{align*}
and cutoff occurs (since the product condition holds). This recovers the well-known cutoff phenomenon for the Ornstein-Uhlenbeck process \cite{BCL23}. Note that since we actually only need a one-sided bound on $\nabla^2\log p_t$, we could actually get the sharp Lipschitz constant $1$. 
\end{example}

\subsection{Discrete chains and random walks on groups}

\subsubsection{Proof of Theorem \ref{Th:RelaxedDiscrete}}

The proof very closely follows that of \cite{PeSa25}, so we only give a sketch of the parts that are left unchanged. As in Section 3.1, the only difference is to use approximate curvature instead of usual Bakry-Emery curvature to ensure a local Poincar\'e inequality holds. Given a function $g: E\to\R$, we denote by $\Lip(g)$ the best constant $L > 0$ such that
$$T(x,y) > 0 \Rightarrow |g(x)-g(y)| \leq L,$$ 
i.e. the Lipschitz constant of $g$ with respect to the metric induced by the graph structure associated with the transition rates of the Markov chain. Recall that we assumed the symmetry condition \eqref{Eq:Symmetry}, so that this graph is undirected.  For later use, we recall the entropy decay bound (which follows~\eqref{Eq:genericDerivEnt})
$$ \Ent(t) \le e^{-\alpha (t-s)} \Ent(s)\;,\quad \forall \;0 \le s \le t\;,$$
where $\alpha$ is modified log-Sobolev constant defined in~\eqref{Eq:mLSI}.

Fix \(\varepsilon\in(0,1/2)\) and set
\[
    t_0:=\tmix(1-\varepsilon).
\]
Fix an initial state \(x\).  Let \(f_t\) be the density of \(P_t(x,\cdot)\) with respect to \(\pi\), and keep the notation previously introduced
\[
    \Ent(t):=\Ent(f_t),
    \qquad
    \Varent(t):=\Varent(f_t).
\]
All estimates below are uniform over all possible initial states \(x\).

We first recall an inequality derived from two discrete estimates in~\cite[Lemmas 3--4]{PeSa25}. Under the sparsity assumption and the entropy decay bound, for all \(t>0\),
\[
    1+\Lip(\log f_t)
    \le
    15\left(\log d+\frac{1}{\alpha t}\right).
\]
Combined with Pinsker's inequality, we obtain a standard bound for the chain started from this fixed \(x\)
\begin{equation}\label{Eq:entropymixingtime}
    t_{\mathrm{mix}}^x(\varepsilon)
    \le
    t+\alpha^{-1}
    \log\!\left(
        1\vee\frac{\Ent(t)}{2\varepsilon^2}
    \right),
    \qquad t\ge0.
\end{equation}
Following the arguments presented above~\eqref{Eq:LocalPoincaré}, the approximate curvature assumption gives the local Poincar\'e estimate
\[
    \Var\bigl(g(X_t)\mid X_0=x\bigr)
    \le
    2Mt\,\E\bigl[\Gamma g(X_t)\mid X_0=x\bigr].
\]
The approximate chain-rule argument of \cite[Proposition 1]{PeSa25} therefore yields
\[
    \frac{d}{d t}\Ent(t)
    \le
    -\frac{\Varent(t)}
    {2Mt\bigl(1+\Lip(\log f_t)\bigr)}.
\]
Combining this with the preceding Lipschitz estimate, we obtain
\[
    \frac{d}{d t}\Ent(t)
    \le
    -\frac{\Varent(t)}
    {30M\bigl(t\log d+\alpha^{-1}\bigr)}.
\]

For \(t\ge t_0\), the definition of \(t_0\) gives
\[
    \|P_t(x,\cdot)-\pi\|_{\TV}\le 1-\varepsilon.
\]
Whenever \(\Ent(t)>1/\varepsilon\), the reverse Pinsker inequality~\eqref{Eq:ReversePinsker} implies
\[
    \Varent(t)\ge \bigl(\varepsilon \Ent(t)-1\bigr)^2.
\]
Consequently, whenever \(t\ge t_0\) and \(\Ent(t)>1/\varepsilon\),
\[
    \frac{d}{d t}\Ent(t)
    \le
    -\frac{\bigl(\varepsilon \Ent(t)-1\bigr)^2}
    {30M\bigl(t\log d+\alpha^{-1}\bigr)}.
\]
Integrating this differential inequality between \(t_0\) and \(t>t_0\), and using that \(s\mapsto (s\log d+\alpha^{-1})^{-1}\) is decreasing, gives
\[
    \Ent(t)
    \le
    \frac1\varepsilon
    +
    \frac{30M\bigl(t\log d+\alpha^{-1}\bigr)}
    {\varepsilon^2(t-t_0)}.
\]
The same upper bound is trivially valid if \(\Ent(t)\le1/\varepsilon\).

Plugging this entropy estimate into the entropy mixing-time bound, we obtain, for all \(t>t_0\),
\[
\begin{aligned}
    t_{\mathrm{mix}}^x(\varepsilon)
    &\le
    t
    +\alpha^{-1}\log\!\left(\frac{30M}{\varepsilon^4}\right)+\alpha^{-1}
    \log\!\left(
        1+\frac{t\log d+\alpha^{-1}}{t-t_0}
    \right),
\end{aligned}
\]
Choosing \(t=t_0+\alpha^{-1}\), using the bound \(\alpha^{-1} \leq \alpha^{-1} \log 3\), and then taking the supremum over the initial state \(x\), yields
\[
\begin{aligned}
	\tmix(\varepsilon)-\tmix(1-\varepsilon)
	&\le
	\alpha^{-1}\log\!\left(\frac{90M}{\varepsilon^4}\right)\\
	&\quad
	+\alpha^{-1}\log(1+\log d)
	+\alpha^{-1}\log\!\left(2+\alpha t_0\right).
\end{aligned}
\]
Since \(t_0=\tmix(1-\varepsilon)\le \tmix(\varepsilon)\), this proves the announced window estimate.

Finally, suppose that the chains depend on \(n\), and that for some \(\delta\in(0,1)\),
\[
    \frac{\alpha_n{\rm t}_{\textsc{mix}}^{(n)}(\delta)}
    {1+\log M_n+\log(1+\log d_n)}
    \longrightarrow+\infty.
\]
Choosing \(\varepsilon<1-\delta\) gives
\[
    {\rm t}_{\textsc{mix}}^{(n)}(1-\varepsilon)\le {\rm t}_{\textsc{mix}}^{(n)}(\delta).
\]
The window estimate above, together with
\[
    \log\bigl(2+\alpha_n{\rm t}_{\textsc{mix}}^{(n)}(\delta)\bigr)
    =
    o\bigl(\alpha_n{\rm t}_{\textsc{mix}}^{(n)}(\delta)\bigr),
\]
implies
\[
    \frac{{\rm w}_{\textsc{mix}}^{(n)}(\varepsilon)}
    {{\rm t}_{\textsc{mix}}^{(n)}(\delta)}
    \longrightarrow0.
\]
Since this holds for arbitrarily small \(\varepsilon\in(0,1/2)\), the usual monotonicity argument yields cutoff.

\subsubsection{Proof of Corollary \ref{Cor:rw_symmetric_group}}

We deal with the continuous-time walk on ${\mathfrak S}_n$ with generator
\[
\mathcal{L}_n f(\sigma)
=
\sum_{\tau\in\cT_n}\nu_n(\tau)\bigl(f(\tau\sigma)-f(\sigma)\bigr),
\]
where \(\cT_n\) denotes the set of transpositions and \(\nu_n\) is a probability measure on \(\cT_n\). We aim at applying Theorem \ref{Th:RelaxedDiscrete}, we thus need to check that the required assumptions hold. The sparsity assumption is satisfied with $d_n = |\cT_n|=n(n-1)/2$. We turn to the entropy decay. Since the invariant measure of the process (which we denote by $\pi_n$) is uniform, from the classical modified logarithmic Sobolev inequality (mLSI) for uniform random transpositions \cite{BoTe06,GaQu03,Goe04} we have
$$\Ent(f) \leq \frac{n-1}{2}\sum_{s \in {\mathfrak S}_n, \tau \in \cT_n} \frac{2}{n(n-1)}(f(\tau s)-f(s))(\log f(\tau s) - \log f(s))\pi_N(s).$$
Therefore, since
$$\frac{2}{n(n-1)} \leq \sup_\tau \nu_n(\tau)\;, \hspace{3mm} \inf_\tau \nu_n(\tau) \leq \nu_n\;,$$
we have
\begin{align*}
	\Ent(f) &\leq \frac{n-1}{2}\frac{\sup \nu_n(\tau)}{\inf \nu_n(\tau)}\sum_{s \in {\mathfrak S}_n, \tau \in \cT_n}(f(\tau s)-f(s))(\log f(\tau s) - \log f(s))\nu_n(\tau)\pi_N(s)\\
	&\leq \frac{n-1}{2}\frac{\sup \nu_n(\tau)}{\inf \nu_n(\tau)} \cE_n(f,\log f)\;,
\end{align*}
where $\cE_n$ is the Dirichlet form of the process. From this inequality, we deduce that the modified log-Sobolev constant defined in~\eqref{Eq:mLSI} satisfies
$$\alpha_n^{-1} \leq \frac{n-1}{2}\frac{\sup \nu_n(\tau)}{\inf \nu_n(\tau)}.$$

Let us now check the approximate curvature assumption. Denote by $\nu_n^k$ the law of the product of $k$ transpositions chosen independently, each according to $\nu_n$. Setting
$$ M := \sup_{n\ge 1} \frac{\max_{\tau\in {\mathcal T}_n}\nu_n(\tau)}{\min_{\tau\in{\mathcal T}_n}\nu_n(\tau)}\;,$$
which is finite by assumption, we have
\begin{align*}
P_t(\Gamma f)(s) &= \sum_{k \in \N} \frac{t^ke^{-t}}{k!}\sum_{p \in \cT_n}\sum_{\tau \in \cT_n} (f(\tau p s) - f(ps))^2\nu_n^{ k}(p)\nu(\tau) \\
&\overset{\sigma = p^{-1}\tau p}{=} \sum_{k \in \N} \frac{t^ke^{-t}}{k!}\sum_{p \in \cT_n}\sum_{\sigma \in \cT_n} (f(p\sigma s) - f(ps))^2\nu_n^{ k}(p)\nu(p\sigma p^{-1}) \\
&\geq M^{-1} \sum_{k \in \N} \frac{t^ke^{-t}}{k!}\sum_{p \in \cT_n}\sum_{\sigma \in \cT_n} (f(p\sigma s) - f(ps))^2\nu_n^{ k}(p)\nu(\sigma) \\
&\geq M^{-1}\sum_{\sigma \in \cT_n} \left( \sum_{k \in \N} \frac{t^ke^{-t}}{k!}\sum_{p \in \cT_n} f(p\sigma s) - f(ps) \nu_n^{ k}(p)\right)^2\nu(\sigma) \\
&= M^{-1}\Gamma(P_tf)(s)
\end{align*}
which is indeed the approximate curvature condition. To go from the third to the fourth line, we used Jensen's inequality. 

All assumptions of Theorem \ref{Th:RelaxedDiscrete} are satisfied, we thus deduce from~\eqref{Eq:CutoffDiscrete}, the value of $d_n$ and the lower bound on $\alpha_n$ that cutoff occurs as soon as
$$\frac{{\rm t}_{\textsc{mix}}^{(n)}(\varepsilon)}{n \log \log n} \longrightarrow +\infty\;.$$
To establish such a lower bound on the mixing time,  we rely on a coupon-collector argument applied to the number of labels that have not yet been touched.  In the uniform case, this idea is classical in the analysis of random transpositions by Diaconis--Shahshahani \cite[Theorem 2]{DiSh81}; see also the fixed-point lower bound in \cite[Section 8.2.1, Proposition 8.4]{LPW}.  In the biased case, Nestoridi--Yan \cite[Theorems 1.1 and 1.3]{NeYa24} prove a much sharper result, including cutoff and the Poisson asymptotics for the number of fixed cards near the critical time.  For our criterion, however, only a rough \(n\log n\) lower bound is needed.

\begin{proposition}[Coupon-collector lower bound]
Under the preceding assumption, for every \(\varepsilon\in(0,1)\), there exists \(c_M>0\) such that
\[
    {\rm t}_{\textsc{mix}}^{(n)}(\varepsilon) \ge c_M n\log n
\]
for all \(n\) large enough.  In particular,
\[
    \frac{{\rm t}_{\textsc{mix}}^{(n)}(\varepsilon)}{n\log\log n}\longrightarrow+\infty.
\]
\end{proposition}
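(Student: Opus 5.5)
We plan a fixed-point (untouched labels) lower bound, the biased analogue of the Diaconis--Shahshahani argument. The continuous-time walk makes jumps at rate one, and at each jump a transposition $\tau=(i\,j)$ is drawn with probability $\nu_n(\tau)$. Since $\nu_n$ is a probability on $\cT_n$ with max/min ratio at most $M$, every transposition satisfies
\[
\nu_n(\tau)\le \frac{M}{|\cT_n|}=\frac{2M}{n(n-1)}.
\]
So a fixed label $i$ is touched by a given jump with probability $p_i:=\sum_{j\ne i}\nu_n((i\,j))\le 2M/n$. Labels are touched independently across the Poisson clock. Hence, by Poisson thinning, the set of jumps touching label $i$ forms a Poisson process of rate $p_i$, and
\[
\P(\text{$i$ untouched by time } t)=e^{-p_i t}\ge e^{-2Mt/n}.
\]

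Let $N_t$ be the number of labels untouched by time $t$. Each such label is a fixed point of $X_t$ (starting from the identity; by transitivity we may assume this start). We will compute the first two moments of $N_t$. We have
\[
\E N_t\ge n e^{-2Mt/n},
\]
and for $i\neq j$,
\[
\P(i,j \text{ untouched})=e^{-(p_i+p_j-\nu_n((i\,j)))t}.
\]
Therefore $\Var N_t\le \E N_t+\sum_{i\ne j}e^{-(p_i+p_j)t}\bigl(e^{\nu_n((i\,j))t}-1\bigr)$. With $t\le n\log n$ and $\nu_n((i\,j))\le 2M/(n(n-1))$, the exponent $\nu_n t$ is $O(M\log n/n)$. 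The correction term is then $O(M\log n/n)\,(\E N_t)^2$ up to constants, because $\sum_{i\neq j}e^{-(p_i+p_j)t}\le(\E N_t)^2$. So $\Var N_t\le \E N_t+o((\E N_t)^2)$, and Chebyshev gives $N_t\ge \E N_t/2$ with probability $1-o(1)$ as long as $\E N_t\to\infty$.

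Now take $t=c_M n\log n$ with $c_M=1/(4M)$. Then $\E N_t\ge n^{1/2}\to\infty$, so the number of fixed points of $X_t$ is at least $n^{1/2}/2$ with high probability. Under the uniform law $\pi_n$, the number of fixed points has mean 1, so by Markov
\[
\pi_n(\mathrm{Fix}\ge n^{1/2}/2)\le 2n^{-1/2}.
\]
Taking $A=\{\sigma:\mathrm{Fix}(\sigma)\ge n^{1/2}/2\}$ yields $\tv(X_t)\ge 1-o(1)>\varepsilon$ for large $n$. Hence ${\rm t}_{\textsc{mix}}^{(n)}(\varepsilon)\ge c_M n\log n$, and the second claim follows since $\log n/\log\log n\to\infty$.

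The only delicate step is the second-moment bound. One must check that the pairwise correlation introduced by the shared transposition $(i\,j)$ is negligible, which uses exactly the uniform upper bound $\nu_n(\tau)\le 2M/(n(n-1))$ coming from the bounded-ratio assumption. Everything else is standard Poissonization.
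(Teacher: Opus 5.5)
Your proof is correct and follows essentially the same route as the paper. Both arguments use Poisson thinning to get $e^{-r_i t}\ge e^{-2Mt/n}$ for each untouched label, then a second-moment bound in which the only correlation comes from the shared transposition $(i\,j)$ and is negligible because $t\max_\tau\nu_n(\tau)=O(\log n/n)$, and finally Markov's inequality for the number of fixed points under the uniform law. The remaining differences are cosmetic: you use the threshold $\E N_t/2$ with Chebyshev and $c_M=1/(4M)$, whereas the paper uses the threshold $(\E U_{t_n})^{1/2}$ and $t_n=\frac{1-\eta}{2M}n\log n$. One wording fix: ``labels are touched independently'' should read ``each jump touches label $i$ with probability $p_i$, independently across jumps'', since distinct labels are not independent, as your own joint computation shows.
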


\begin{proof}
By left-translation invariance of the walk and because the stationary measure is uniform on ${\mathfrak S}_n$, it is enough to start from the identity. For \(i\in\{1,\ldots,n\}\), let
\[
    r_i:=\sum_{j\ne i}\nu_n((ij))
\]
be the rate at which label \(i\) is touched.  The bounded-bias assumption implies
\[
    \max_{\tau\in\cT_n}\nu_n(\tau)
    \le \frac{M}{|\cT_n|}
    =
    \frac{2M}{n(n-1)},
\]
and therefore
\[
    r_i\le (n-1)\max_{\tau\in\cT_n}\nu_n(\tau)\le \frac{2M}{n}.
\]
Let \(U_t\) be the number of labels that have not been touched before time \(t\).  Then
\[
    \E (U_t)
    =
    \sum_{i=1}^n e^{-r_i t}
    \ge
    n\exp\!\left(-\frac{2Mt}{n}\right).
\]
Take
\[
    t_n=\frac{1-\eta}{2M}\,n\log n,
    \qquad \eta\in(0,1).
\]
Then \(\E (U_{t_n})\ge n^\eta\to+\infty\).

It remains to check that \(U_{t_n}\) does not fluctuate too much.  If \(A_i\) denotes the event that label \(i\) has not been touched before time \(t\), then, for \(i\ne j\),
\[
    \P(A_i\cap A_j)
    =
    \exp\{-(r_i+r_j-\nu_n((ij)))t\}
    \le
    e^{t\max_\tau\nu_n(\tau)}
    \P(A_i)\P(A_j).
\]
Since \(t_n\max_\tau\nu_n(\tau)=O((\log n)/n)=o(1)\), it follows that
\[
    \E (U_{t_n}^2)
    \le
    \E (U_{t_n})
    +
    (1+o(1))(\E (U_{t_n}))^2.
\]
Thus \(U_{t_n}/\E (U_{t_n})\to1\) in probability.  In particular, if \(a_n=(\E( U_{t_n}))^{1/2}\), then
\[
    \P(U_{t_n}\ge a_n)\longrightarrow1.
\]
Write
\[
    \Fix(\sigma):=\{i\in\{1,\ldots,n\}:\sigma(i)=i\}
\]
for the set of fixed points of a permutation \(\sigma\).  Every untouched label belongs to \(\Fix(X_t)\).  On the other hand, under the uniform measure \(\pi_n\) on \(\mathfrak S_n\), the number \(F_n:=|\Fix(\sigma)|\) of fixed points satisfies \(\E_{\pi_n}F_n=1\).  By Markov's inequality,  
\[
    \pi_n(F_n\ge a_n)\le \frac1{a_n}\longrightarrow0.
\]
For the set \(B_n=\{\sigma:|\Fix(\sigma)|\ge a_n\}\), we obtain   \( \P(X_{t_n}\in B_n)\to1\) and \( \pi_n(B_n)\to0\).
Consequently,
\[
    \bigl\|\mathcal L(X_{t_n})-\pi_n\bigr\|_{\TV}\to1,
\]
which proves the claimed lower bound.
\end{proof}

\subsection{Hypoelliptic diffusions}

\subsubsection{Main abstract statement}

Our goal is to establish a cutoff criterion for hypoelliptic diffusions in $\R^d \times \R^d$, in the limit where $d$ becomes large. Note that for such dynamics the carr\'e du champ operator $\Gamma$ differs from $|\nabla|^2$, in particular for kinetic diffusions $\Gamma=|\nabla_v|^2$ which is smaller than $|\nabla|^2 = |\nabla_x|^2 + |\nabla_v|^2$. In the sequel, we shall use $q$ for the full dimension of the space in which the Markov process evolves, so that for kinetic diffusions we have $q = 2d$.\\
We establish a general result cutoff result for a sequence of dynamics in $\R^{q_n}$ satisfying $\Gamma \le |\nabla|^2$. Let us introduce the following notations: given a probability measure $\mu$ admitting a density $g$ w.r.t.~$\pi$, we set
$$ I_\pi(\mu) := \int \frac{|\nabla g|^2}{g} d\pi\;,\quad \Ent_\pi(\mu) := \int g \log g \,d\pi\;.$$
In the particular case where $\mu = f_t d\pi$, we have $I_\pi(\mu) = I(t)$ and $\Ent_\pi(\mu) = \Ent(t)$.

\begin{theorem} \label{abstract_thm_cutoff_hypoelliptic}
	Consider a process in $\R^q$ such that $\Gamma \le |\nabla|^2$ and such that the following three assumptions:
	\begin{enumerate}
		\item Short-time entropic regularization: there exists $\varphi_1 : \R_+^* \longrightarrow \R_+$ such that for all probability measure $\mu$ and all $s,t\ge 0$
		$$ \Ent_\pi(P_{t+s}^* \mu) \leq \varphi_1(t) W_2(P_s^*\mu, \pi)^2\;.$$
		\item Short-time Fisher information regularization: there exists $\varphi_2 : \R_+^* \longrightarrow \R_+$ such that for all probability measure $\mu$ and all $s,t\ge 0$
		$$I_\pi(P_{t+s}^* \mu) \leq \varphi_2(t)\Ent_\pi(P_{s}^* \mu)\;.$$
		\item Wasserstein hypocoercivity: there exist $\varrho, M > 0$ such that for all probability measures $\mu,\nu$ and all $t\ge 0$
		$$W_2(P^*_t\mu, P^*_t\nu)^2 \leq Me^{-2 \varrho t}W_2(\mu, \nu)^2\;.$$
	\end{enumerate}
	Assume that a sequence of process $(X^{(n)})_{n\ge 1}$ in $\R^{q_n}$ satisfies the above assumptions for fixed $\varphi_2$, $M$ and $\varrho$, and for $n$-dependent $\varphi_1^{(n)}$. Assume furthermore that each $X^{(n)}$ starts from some deterministic initial condition in a $W_2$ ball of radius $R_n$ around the invariant measure $\pi_n$. Then cutoff occurs as soon as there exists $\varepsilon \in (0,1)$ such that
	$${\rm t}_{\textsc{mix}}^{(n)}(\varepsilon) \gg 1+ \log \log (\varphi_1^{(n)}(1)R^2_n).$$
\end{theorem}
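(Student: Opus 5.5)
We adapt the varentropy argument of \cite{Sal25} (as in the proof of Theorem \ref{Th:Relaxed}), with two changes: the local Poincaré inequality is stated with $|\nabla|^2$ instead of $\Gamma$, and the entropy dissipation $I^\Gamma$ is not comparable to the full Fisher information $I$. The first issue is handled by assumption 3 and Proposition \ref{loc_pi_elliptic}. Kuwada duality (Proposition \ref{Prop:Kuwada}, whose proof only uses the Euclidean distance and does not need ellipticity) turns the $W_2$ contraction with rate $\sqrt{M}e^{-\varrho t}$ into $|\nabla P_t f|^2\le Me^{-2\varrho t}P_t|\nabla f|^2$. We then run the variance computation above \eqref{Eq:LocalPoincaré} with $\Gamma\le|\nabla|^2$: the interior $\Gamma P_s f$ is bounded by $|\nabla P_s f|^2$. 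This gives $\Var[f(X_t)|X_0]\le \frac{M}{\varrho}P_t|\nabla f|^2$, uniformly in $t$. Applying it to $f=\log f_t$ yields
\[
\Varent(t)\le \frac{M}{\varrho}\, I(t).
\]
Note that $I$ here is the full Fisher information, not $I^\Gamma$.

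Since we cannot use $\frac{d}{dt}\Ent=-I^\Gamma$ to close a differential inequality in $I$, we use assumption 2 instead, which trades time for regularity. Set $t_0=\tmix(1-\epsilon)$. For $t\ge t_0+1$, combining the bound above with assumption 2 at lag $1$ gives $\Varent(t)\le \frac{M\varphi_2(1)}{\varrho}\Ent(t-1)$. Then reverse Pinsker \eqref{Eq:ReversePinsker} gives
\[
\epsilon\Ent(t)-1\le \sqrt{C\,\Ent(t-1)},\qquad C=M\varphi_2(1)/\varrho .
\]
This is a discrete square-root recursion. Iterating it from $t_0+1$ over $k$ unit steps, the entropy drops from $E_0$ to roughly $\max(C'/\epsilon^{2}, E_0^{2^{-k}}\cdot\mathrm{const})$. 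Hence after $k\approx \log_2\log E_0$ steps the entropy is $O_\epsilon(1)$, with constants depending only on $\epsilon,M,\varrho,\varphi_2(1)$. Note that entropy is nonincreasing, so we may simply iterate.

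It remains to bound $E_0=\Ent(t_0+1)$. Here we use assumption 1 with $s=t_0$ and $t=1$, together with assumption 3 applied to $\mu=\delta_{x_0}$ and $\nu=\pi$. This gives $E_0\le \varphi_1^{(n)}(1)Me^{-2\varrho t_0}R_n^2\le M\varphi_1^{(n)}(1)R_n^2$. Therefore, at time
\[
t_1=t_0+1+O(\log\log(\varphi_1^{(n)}(1)R_n^2)+1),
\]
the entropy is $O_\epsilon(1)$. To convert this to total variation we need the $\epsilon$-small regime, and Pinsker alone gives only a bounded distance. For this step we use the Poincaré constant: Lemma \ref{Lem:relaxation_time_by_contraction} (or the analogous statement from \cite{Mon23} for the hypocoercive case) bounds $1/\lambda$ in terms of $M,\varrho$ and the diffusion constant. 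Inequality \eqref{Eq:MixEnt} then gives $\tmix(\epsilon)\le t_1+(1+O_\epsilon(1))/(\lambda\epsilon)$. So $\wmix(\epsilon)\le O_\epsilon(1)+O(\log\log(\varphi_1^{(n)}(1)R_n^2))$, and cutoff follows from the growth hypothesis on $\tmix$ together with the usual monotonicity argument.

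The main obstacle is that \eqref{Eq:MixEnt} relies on a Poincaré inequality valid for the nonreversible kinetic semigroup, namely $L^2$ decay \eqref{Eq:SpectralGap} rather than a Dirichlet-form bound. We expect this to follow from the $W_2$ hypocoercivity (assumption 3) via \cite[Theorem 2]{Mon23}, which gives $L^2$ decay with a constant prefactor; that prefactor only shifts the time by $O(1)$. A cruder fallback is to use assumption 3 together with assumption 1 again to get entropy decay $\Ent(t+1)\le \varphi_1(1)Me^{-2\varrho t}W_2^2$. This route also needs a Talagrand-type bound $W_2^2\lesssim \Ent$, which we would obtain from the Poincaré/log-Sobolev constant of $\pi$ if it is available.
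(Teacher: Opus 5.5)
Your proposal is correct and follows essentially the same route as the paper's proof. The shared steps are: the bound $\Varent(t)\le \frac{M}{\varrho}I(t)$ from Kuwada duality and $\Gamma\le|\nabla|^2$; Fisher-information regularization turned into a square-root recursion iterated about $\log_2\log\Ent(t_0+1)$ times; assumption 1 to bound the starting entropy; and finally \eqref{Eq:MixEnt} with Lemma \ref{Lem:relaxation_time_by_contraction}. For the starting entropy, the paper uses monotonicity, $\Ent(t_0+s)\le\Ent(s)\le\varphi_1^{(n)}(s)R_n^2$, while you go through the contraction at time $t_0$; either works.

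Your caveat that \eqref{Eq:MixEnt} really needs $L^2$ decay of the non-reversible semigroup, rather than a Dirichlet-form Poincar\'e inequality, is a fair point the paper does not address explicitly. Your fallback through assumptions 1 and 3 plus a Talagrand inequality would not suffice, though. It costs a time of order $\varrho^{-1}\log\varphi_1^{(n)}(1)$, which grows with $n$ and would destroy the $\log\log$ window, so the $L^2$-decay route is the one to keep.
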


Fisher information regularization can be obtained using \cite[Theorem 9]{Mon19} or \cite[Theorem A15]{Vil09}. Entropic regularization is also discussed in \cite{Vil09}, with non-explicit constants. A more precise result appears in \cite{GuWa12}. It is not necessary that it is Wasserstein distance that appears in the upper bound for the regularization, all we need is an upper bound $\alpha_n$ on entropy after some time that is uniform in the initial data considered, and that the mixing time is much larger than $\log \log \alpha_n$. The motivation for stating Assumption 1 in this form is to connect to the literature on reverse transport-entropy inequalities along Markov processes and diffusion PDEs. 

\begin{remark}
	For a reversible elliptic diffusion process $dX_t = -\nabla U(X_t)dt + \sqrt{2}dB_t$, both short-time entropic and Fisher information regularization hold as soon as $V$ is semi-convex \cite{BGL01}. Hence Theorem \ref{abstract_thm_cutoff_hypoelliptic} could be used to prove cutoff for reversible diffusion processes. However, the mixing-time window bound we obtain would be sub-optimal. 
\end{remark}

The proof follows the varentropy approach to cutoff, and in particular shall use the same reverse Pinsker inequality \cite[Lemma 1]{Sal25} and bound on mixing time by entropy \cite[Lemma 2]{Sal25}. However, compared to \cite{Sal25} we lose two properties : the non-negative curvature bound and the differential relation between entropy and the full Fisher information. As in Section \ref{Sec:Elliptic}, we use Wasserstein contraction estimates (the third assumption of our theorem) to recover a local Poincar\'e inequality to bypass the first issue. The first two assumptions are regularization estimates that will allow us to overcome the second issue. 

\begin{proof}[Proof of Theorem \ref{abstract_thm_cutoff_hypoelliptic}]
	Set $t_0 := t_{mix}(1-\epsilon)$. A classical computation~\cite[Eq. (3.1.20) and (3.1.21)]{BGL14} yields for any regular enough function $g$
	\begin{eqnarray*} 
		\Var[g(X_t)] & = & 2\int_0^t (P_{t-s}\Gamma P_{s}g)(x){\,\rm d}s,
	\end{eqnarray*}
	Since $\Gamma \le |\nabla|^2$ we deduce that
	\begin{eqnarray*} 
		\Var[g(X_t)] & \leq & 2\int_0^t (P_{t-s}|\nabla P_{s}g|^2)(x){\,\rm d}s.
	\end{eqnarray*}
	The third assumption combined with Proposition \ref{Prop:Kuwada} ensures that
	$$|\nabla P_s f|^2 \leq \varphi(s)^2 P_s(|\nabla f|^2)\;,\quad \varphi(s)^2 = Me^{-2 \varrho s}\;,$$
	so that
	\begin{eqnarray*} 
		\Var[g(X_t)] & \leq & 2\int_0^t \varphi(s)^2(P_{t-s}P_{s} |\nabla g|^2)(x){\,\rm d}s\\
		&\leq & \psi(t) P_t |\nabla g|^2(x)\;,
	\end{eqnarray*}	
	where
	$$\psi(t) = 2 \int_0^t M e^{-2 \varrho s} ds \le \frac{M}{\varrho}\;.$$
	Taking $g= \log f_t$ and recalling the definition~\eqref{Eq:I} of $I(t)$ we find
	$$ \Varent(t) \le \psi(t) I(t)\;.$$
	For $t \geq t_0$, using the reverse Pinsker inequality~\eqref{Eq:ReversePinsker} we then get
	$$ \Ent(t) \leq \frac{1 + \sqrt{M \varrho^{-1} I (t)}}{\epsilon}\;.$$
	Hence as long as $\Ent(t) \geq 4/\epsilon$, we have
	$$\Ent(t)^2 \leq \frac{2M I(t)}{\varrho\epsilon^2}.$$
	Using the assumption of short-time Fisher information regularization, we have for any fixed $s>0$
	$$\Ent(t+s)^2 \leq \frac{2M\varphi_2(s)}{\varrho\epsilon^2}\Ent(t),$$
	or equivalently
	$$ \log \Ent(t+s) \leq \frac12 \log \Ent(t) + \frac12 \log \frac{2M\varphi_2(s)}{\varrho\epsilon^2}\;.$$
	Set $b = \max(0, \log \frac{2M\varphi_2(s)}{\varrho\epsilon^2})$ and $u_k := \log(\Ent(t + ks))$ for any integer $k\ge 0$. Using the monotonicity of $(u_k)_k$ we get
	$$u_k \geq \log(4/\epsilon) \Rightarrow u_k \leq \frac{1}{2}u_{k-1} + \frac{b}{2}.$$
	Hence as long as $u_k$ is greater than $\log(4/\epsilon)$, it is dominated by the solution of the recursive equation $x_{k} = (x_{k-1}+b)/2$ with $x_0 = u_0$. Hence
	$$u_k \leq \max\left(\log\left(\frac{4}{\epsilon}\right), 2^{-k}u_0 + b\right).$$
	Setting $k_0 := \lceil \log_2(u_0)\rceil$, we see that at time $t+k_0s$ we have
	$$\Ent(t+k_0s) \leq \max\left(\frac{4}{\epsilon}, \exp(1 + b)\right).$$
	This estimate depends on $\Ent(t)$ through $k_0$, which is of order $\log \log \Ent(t)$. 
	
	Now take $t:=t_0 + s$, we have
	$$\Ent(t_0+s) \leq \Ent(s) \leq \varphi_1^{(n)}(s)R_n^2\;,$$
	so that we get
	$$\Ent(t_0 + (k_0+1)s) \leq \max\left(\frac{4}{\epsilon}, \exp(1 + b)\right).$$
	Inserting this estimate into~\eqref{Eq:MixEnt}, and using Lemma \ref{Lem:relaxation_time_by_contraction}, we get
	\begin{align*}
		{\rm t}_{\textsc{mix}}^{(n)}(\epsilon) & \leq {\rm t}_{\textsc{mix}}^{(n)}(1-\epsilon) + (1+k_0)s + \frac{1}{\lambda \epsilon}(1+ \max(4\epsilon^{-1}, \exp(1+b))) \\
		&\leq {\rm t}_{\textsc{mix}}^{(n)}(1-\epsilon) + s\big(2+ \log\log(\varphi_1^{(n)}(s)R_n^2)\big) + \frac{M}{\varrho\epsilon}\Big(8\epsilon^{-1}+ \frac{6M\varphi_2(s)}{\varrho\epsilon^2}\Big)\;.
	\end{align*}
	This estimate on the cutoff window implies cutoff in the regime we consider.
\end{proof}

\subsubsection{Application to kinetic diffusion processes}

\begin{proof}[Proof of Theorem \ref{thm_kinetic_cutoff_general}]
We will check that the assumptions of Theorem \ref{abstract_thm_cutoff_hypoelliptic} are fulfilled with good enough constants. 

Assumption 1 (entropic regularization) is ensured by Lemma \ref{lem_reg_entropy_kinetic} below with $\varphi_1(s)$ of the form $CM^2d_n \max(s, s^{-3})$ where $C$ is a numerical constant that does not depend on any parameter. 

Assumption 2 (Fisher information regularization) is a consequence of \cite[Theorem A.18]{Vil09}. In the notations of the cited result, we can check its assumptions are satisfied with $N_c = 1$ and
$$C_0 = A = \nabla_v; \hspace{3mm} B = v\cdot \nabla_x -\nabla H(x) \cdot \nabla_v, \hspace{3mm} C_1 = \nabla_x;$$
$$Z_1 = 1\;,\hspace{3mm} Z_2 = \Hess H; \hspace{3mm} R_1 = 0.$$
Our assumptions on $H$ ensure $Z_2$ is bounded from above and below by positive constants that do not depend on $d_n$. Therefore the conclusion of \cite[Theorem A.15]{Vil09} is Fisher information regularization with $\varphi_2(t) = C/t^3$ and $C$ a numerical constant that only depends on $m$ and $\Lambda$.

Assumption 3 (Wasserstein contractivity) is ensured by \cite[Theorem 2.12]{Bau17}, with constants $(M, \lambda)$ that are uniform in $n$ (the uniformity is not stated there, but can be checked from the proof). 
\end{proof}

The following regularization estimate for entropy, which we used in the above proof, is a direct consequence of \cite[Corollary 4.7 part (1)]{GuWa12}: 
\begin{lemma} \label{lem_reg_entropy_kinetic}
Consider a dynamic of the form \eqref{eq__kin_dynamic} on $\R^d$, and assume that $\nabla H$ is $\Lambda$-lipschitz. Then 
$$\Ent_{\pi}(P_t^*\mu) \leq \varphi_1(t)W_2(\mu, \pi)^2$$
with $\varphi_1(t) = 2t \max((\Lambda d^{1/2} + 3d^{1/2}/(2t) + 6/t^2)^2, (d^{1/2} + 4/t + 4\Lambda d^{1/2}t/27)^2)$. 
\end{lemma}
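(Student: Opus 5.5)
We plan to derive the lemma by specializing a log-Harnack / reverse transport-entropy inequality for the kinetic semigroup, as provided by the coupling-by-change-of-measure method of Guillin and Wang \cite{GuWa12}.

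\textbf{Step 1: pointwise log-Harnack inequality.} The first step is to recall the statement of \cite[Corollary 4.7 part (1)]{GuWa12}. For the kinetic dynamics \eqref{eq__kin_dynamic} with $\nabla H$ being $\Lambda$-Lipschitz, it gives for every positive bounded measurable $g$ and all $z=(x,v)$, $z'=(x',v')$:
\[
P_t \log g(z) \le \log P_t g(z') + \Phi_t(z,z'),
\]
where $\Phi_t(z,z')$ is a quadratic form in $|x-x'|$ and $|v-v'|$.

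That bound is obtained by a coupling in which the second velocity is driven towards the first by an explicit control on $[0,t]$. The control is chosen so that both position and velocity coincide at time $t$. Its cost, computed by Girsanov, contributes terms of order $t^{-1}$ and $t^{-3}$ from the polynomial-in-time control. The Lipschitz drift $\Lambda$ enters through the discrepancy of the forces along the coupling.

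We will simply read off the constants and bound $\Phi_t(z,z')\le \varphi_1(t)|z-z'|^2$, with $\varphi_1$ as in the statement. The factor $d^{1/2}$ comes from passing from coordinate-wise estimates to Euclidean norms in $\R^d$. We will take the max of the $x$- and $v$-coefficients and absorb cross terms by $2ab\le a^2+b^2$, which is where the factor $2t$ appears.

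\textbf{Step 2: integration against a coupling.} By the standard duality, the log-Harnack inequality implies
\[
\Ent(P_t^*\delta_z\,|\,P_t^*\delta_{z'})\le \Phi_t(z,z').
\]
We then integrate against an optimal $W_2$ coupling $\gamma$ of $(\mu,\pi)$. Joint convexity of relative entropy and invariance $P_t^*\pi=\pi$ give
\[
\Ent_\pi(P_t^*\mu)=\Ent\Big(\int P_t^*\delta_z\,d\gamma \,\Big|\, \int P_t^*\delta_{z'}\,d\gamma\Big)\le \int \Phi_t\,d\gamma\le \varphi_1(t)W_2(\mu,\pi)^2 .
\]

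\textbf{Main obstacle.} The main difficulty is bookkeeping: the result of \cite{GuWa12} is stated in a possibly different normalization (friction and noise constants), so we must track the time-scaling. We must also make sure the dimension dependence really is $d^{1/2}$ per coefficient, with no worse factor. If the cited corollary gives the bound only in a weaker norm, we would redo the Girsanov cost estimate directly with the explicit control $h(s)$ chosen as a cubic polynomial in $s$.
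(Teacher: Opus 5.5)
Your proposal is correct and follows the paper's route. The paper gives no argument beyond citing \cite[Corollary 4.7 part (1)]{GuWa12}, and your Steps 1--2 are the standard passage from that Guillin--Wang estimate to the integrated entropy--$W_2$ bound: the log-Harnack inequality with quadratic cost, then Donsker--Varadhan duality for the Dirac-initialized laws, then integration against an optimal coupling of $(\mu,\pi)$ using convexity and $P_t^*\pi=\pi$.
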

\begin{remark}It is possible to get an estimate for non-Lipschitz $\nabla H$ via \cite[Corollary 4.7 part (2)]{GuWa12}. The effective dependence in $d$ may get worse (through higher moments of $\mu$), but is still polynomial in $d$ under some mild assumptions, which would be good enough for our purpose. 
\end{remark}

\begin{remark} \label{rmk_hypo_monmarche}
Our method can also be applied to kinetic diffusions with non-convex potential. As for Corollary \ref{Cor:elliptic_high_temp}, we would have to introduce a temperature parameter that is high enough. See \cite[Section 3.2]{Mon26} for the necessary Wasserstein contraction estimates. 
\end{remark}

\subsubsection{Cutoff for kinetic mean-field dynamics}
\begin{proof}[Proof of Corollary \ref{Cor:Cutoff_kinetic_mf}]
We check the assumptions of Theorem \ref{thm_kinetic_cutoff_general}. 
For \(z=(z_1,\ldots,z_n)\in(\R^p)^n\), one has
\[
\begin{aligned}
    z\cdot \nabla^2 U_n(x)z
    &=
    \sum_{i=1}^n
    z_i\cdot \nabla^2 \hat{U}(x_i)z_i \\
    &\quad+
    \frac1{2n}\sum_{i,j=1}^n
    (z_i-z_j)\cdot
    \nabla^2 W(x_i-x_j)(z_i-z_j).
\end{aligned}
\]
The convexity of \(W\) gives the lower bound
\[
    z\cdot \nabla^2 U_n(x)z\ge m_{\hat{U}}\sum_i |z_i|^2.
\]
For the upper bound, we use
\[
    \frac1{2n}\sum_{i,j=1}^n |z_i-z_j|^2
    =
    \sum_{i=1}^n |z_i-\bar z|^2
    \le
    \sum_{i=1}^n |z_i|^2,
    \qquad
    \bar z=\frac1n\sum_i z_i.
\]
Thus
\[
    \nabla^2 U_n\le (M_{\hat{U}}+L_W)I_{pn}.
\]
Hence the Hessian bounds required in Theorem \ref{thm_kinetic_cutoff_general} hold. Consequently Theorem \ref{thm_kinetic_cutoff_general} shows that cutoff holds as soon as 
\[
{\rm t}_{\textsc{mix}}^{(n)} \gg \log\log(d_n \vee R_n),
\qquad d_n=2pn,
\]
where $R_n$ is the distance between the initial condition and the center of mass, thus completing the proof.
\end{proof}

\begin{remark}[Non-convex or weakly non-convex interactions]
The convexity of \(W\) is not conceptually indispensable.  If
\[
    -\ell_W I_p\le \nabla^2 W\le L_W I_p
\]
and if \(m_{\hat{U}}-\ell_W>0\), the same computation gives
\[
    (m_{\hat{U}}-\ell_W)I_{pn}
    \le
    \nabla^2 U_n
    \le
    (M_{\hat{U}}+L_W)I_{pn}.
\]
Hence the preceding criterion still applies as soon as
\[
    \sqrt{M_{\hat{U}}+L_W}-\sqrt{m_{\hat{U}}-\ell_W}<1.
\]
For non-convex potentials outside this pinched regime, one has to consider the recent work of Monmarch\'e \cite[Theorem 9 and Section 3.2]{Mon26}, in the same spirit as the coupling methods already used for elliptic diffusions \cite[Theorem 1 and Section 4.2]{Mon23}. In fact any result giving a contraction
\[
    W_2(P_t^*\mu,P_t^*\nu)
    \le C e^{-\lambda t}W_2(\mu,\nu),
\]
with \(C,\lambda\) uniform in \(n\), is sufficient.
\end{remark}

Before proving Corollary \ref{Cor:Cutoff_kinetic_mf_application}, we collect a lower bound on the TV distance to equilibrium of the limiting dynamics. Let $(\bar{X},\bar{V})$ be the $\R^p\times \R^p$ valued-process solution of
\begin{equation}\label{Eq:McKeanVlasov}
	\begin{cases}
		d\bar{X}_t &= \bar{V}_tdt\;,\\
		d\bar{V}_t &= -\nabla \hat{U}(\bar{X}_t)dt - \nabla W * \bar{\mu}_t(\bar{X}_t) dt -  \bar{V}_tdt + \sqrt{2}dB_t\;,
	\end{cases}
\end{equation}
where $\bar{\mu}_t$ is the law of $(\bar{X}_t,\bar{V}_t)$.
\begin{lemma}
	Assume that $\hat{U}(x) = (M/2) |x|^2$ for some constant $M>0$, and that $W$ is even and satisfies $0\le \nabla^2 W\le L_W I_p$ with $0 \le L_W < M / ( 4 M + 2)$. Pick a non-zero, deterministic initial condition $(X_0,V_0)$. Then there exist two constants $c,K>0$ such that for all $t>0$
	\begin{equation}\label{Eq:ToProveClaim}
		d_{TV}(\bar{\mu}_{t}, \bar{\mu}_\infty) \ge K e^{-ct}\;.
	\end{equation}
\end{lemma}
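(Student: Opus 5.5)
The approach is to test the total variation distance against the \emph{mean} of the nonlinear process. For this particular McKean–Vlasov dynamics the mean evolves in closed form, as a damped linear oscillator, and it cannot decay faster than exponentially.

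\textbf{Step 1: closed equation for the mean.} Set $m_t := \E\bar X_t$ and $u_t := \E \bar V_t$. Taking expectations in \eqref{Eq:McKeanVlasov}, the interaction contributes
\[
\E\big[\nabla W * \bar\mu_t(\bar X_t)\big] = \E\big[\nabla W(\bar X_t - \bar X_t')\big],
\]
where $\bar X_t'$ is an independent copy of $\bar X_t$. Since $W$ is even, $\nabla W$ is odd. The pair $(\bar X_t,\bar X_t')$ is exchangeable, so this expectation vanishes. With $\hat U(x)=(M/2)|x|^2$ we therefore obtain the linear system
\[
\dot m_t = u_t,\qquad \dot u_t = -M m_t - u_t,
\]
that is, $(m_t,u_t) = e^{tA}(X_0,V_0)$ for a fixed $2\times 2$ matrix $A$ (tensorized with $I_p$). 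Because $e^{tA}$ is invertible with $\|e^{-tA}\|\le e^{\|A\| t}$, we get
\[
|(m_t,u_t)| \ \ge\ e^{-\|A\|t}\,|(X_0,V_0)|,
\]
which is strictly positive since the initial condition is non-zero. (One could sharpen the rate $c$ using the eigenvalues of $A$, but this is not needed.)

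The stationary law $\bar\mu_\infty$ is a fixed point of the same mean dynamics. The only equilibrium of the linear system is $0$, so $\bar\mu_\infty$ has zero mean in both variables.

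\textbf{Step 2: uniform second moments.} I need $S := \sup_{t\ge0}\int |z|^2 \,d\bar\mu_t + \int |z|^2\, d\bar\mu_\infty <\infty$. I would get this from the smallness condition $L_W< M/(4M+2)$. That condition is exactly the type of hypothesis under which \cite{CLRW24} (or a direct Lyapunov computation with the modified quadratic form $a|x|^2+2b\,x\cdot v+|v|^2$ used in hypocoercivity) yields either uniform-in-time moment bounds or $W_2$-convergence of $\bar\mu_t$ to $\bar\mu_\infty$; either implies $S<\infty$. I expect this to be the main technical point. The plan is a Lyapunov function $\mathcal V(x,v)=|v|^2+M|x|^2+\eta\, x\cdot v$: compute $\frac{d}{dt}\E\mathcal V$, use $\nabla W$ $L_W$-Lipschitz and odd to bound the interaction term by $L_W\,\E|\bar X_t-\bar X_t'|\,|\bar V_t| \lesssim L_W \E(|\bar X_t|^2+|\bar V_t|^2)$, and check that the restriction on $L_W$ keeps the resulting quadratic form negative definite.

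\textbf{Step 3: from means to total variation.} Write $\nu:=\bar\mu_t$, $\mu:=\bar\mu_\infty$ and $z=(x,v)$. Splitting at a radius $R$,
\[
\Big|\int z\, d(\nu-\mu)\Big| \le \int_{|z|\le R}|z|\,d|\nu-\mu| + \int_{|z|>R}|z|\,d(\nu+\mu) \le 2R\, d_{TV}(\nu,\mu) + \frac{S}{R}.
\]
Optimizing in $R$ gives $\big|(m_t,u_t)\big|\le 2\sqrt{2S\, d_{TV}(\bar\mu_t,\bar\mu_\infty)}$. Combining this with Step 1 yields
\[
d_{TV}(\bar\mu_t,\bar\mu_\infty)\ \ge\ \frac{|(X_0,V_0)|^2}{8S}\,e^{-2\|A\|t},
\]
which is \eqref{Eq:ToProveClaim} with $K = |(X_0,V_0)|^2/(8S)$ and $c = 2\|A\|$.
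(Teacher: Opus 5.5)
Your argument is correct, and it differs from the paper's in two respects.

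\textbf{What each proof does.} The paper tests against a single scalar statistic $f(x,v)=(\alpha x+v)\cdot y$, chosen so that $\E f(\bar X_t,\bar V_t)$ is an exact exponential. It controls the variance of $f$ under $\bar\mu_t$ and $\bar\mu_\infty$ by a Lyapunov function and concludes with the distinguishing-statistic lemma \cite[Prop 7.12]{LPW}. It gets centering of $\bar\mu_\infty$ from the evenness of $\hat U$ and $W$. You instead follow the whole mean vector and use only the invertibility of $e^{tA}$. You replace the textbook lemma by an explicit truncation, which needs the same moment input. You get centering from $\ker A=\{0\}$, without invoking uniqueness or symmetry of the stationary law.

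\textbf{What each buys.} The paper's route gives a sharper exponent than your $2\|A\|$. This is harmless later, where any exponential rate suffices. But the paper's route requires a real left eigenvector of $A=\begin{pmatrix}0&1\\-M&-1\end{pmatrix}$. The identity $\frac{d}{dt}\E f=-(1-\alpha)\E f$ actually forces $\alpha(1-\alpha)=M$, not $\alpha(\alpha-1)=M$ with $\alpha<0$, and this is solvable only for $M\le 1/4$. For $M>1/4$ the eigenvalues are $-\frac12\pm i\omega$ with $\omega=\sqrt{M-1/4}$. The mean of any fixed linear statistic is then $e^{-t/2}(a\cos\omega t+b\sin\omega t)$, which vanishes at infinitely many times, so no fixed $f$ can give a bound valid for all $t>0$. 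Your use of $|(m_t,u_t)|$, which never vanishes, is in effect a time-dependent choice of direction. It covers the full range $M>0$ in the statement, and needs only $(X_0,V_0)\neq 0$ rather than $\alpha X_0+V_0\neq 0$.

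\textbf{On your Step 2 sketch.} The paper also only cites this moment bound. In $\frac{d}{dt}\E\mathcal V$, handle the $\eta\,x$-part of the interaction via
$\E[\nabla W(\bar X_t-\bar X_t')\cdot\bar X_t]=\frac12\E[\nabla W(\bar X_t-\bar X_t')\cdot(\bar X_t-\bar X_t')]\ge 0$, which follows from convexity of $W$. Combined with Cauchy--Schwarz on the $v$-part, this closes for your $\mathcal V$ throughout $L_W<M/(4M+2)$. A plain Lipschitz bound on the $x$-part may fail to close for small $M$ near that threshold.
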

\begin{proof}
	Set $f(x,v) = (\alpha x + v) \cdot y$ for some vector $y\in \R^p$ and some parameter $\alpha \in \R$ to be fixed later on. Since $\hat{U}$ and $W$ are even, we deduce that the invariant measure associated to the dynamics $(\bar{X},\bar{V})$ is centered: in particular its projection through the map $f$ has zero mean.\\
	We are going to show that for some choice of parameters $y,\alpha$ there exists $c_0>0$ such that
	\begin{equation}\label{Eq:expectf}
		\E[f(\bar{X}_t,\bar{V}_t)] \ge e^{-c_0 t}\;,\quad t\ge 0\;,
	\end{equation}
	and
	\begin{equation}\label{Eq:varf}
		\sup_{t\ge 0} \Var(f(\bar{X}_t,\bar{V}_t)) < \infty\;,\quad  \Var(f(\bar{X}_\infty,\bar{V}_\infty)) < \infty\;.
	\end{equation}
	With these bounds at hand, by~\cite[Prop 7.12]{LPW} (which also applies in the continuum) we deduce that for some constant $K>0$
	$$ d_{TV}(\bar{\mu}_{t}, \bar{\mu}_\infty) \ge K e^{-2c_0t}\;,\quad t\ge 0\;,$$
	which is the desired result.\\
	We compute
	$$ d\E[f(\bar{X}_t,\bar{V}_t)] = (\alpha - 1) \E[\bar{V}_t\cdot y] - \E[\nabla \hat{U}(\bar{X}_t) \cdot y] - \E[\nabla W* \mu_t(\bar{X}_t)\cdot y]\;.$$
	If we let $\bar{Y}$ be an independent copy of $\bar{X}$ then, using that $W$ is even:
	$$ \E[\nabla W* \mu_t(\bar{X}_t)\cdot y] = \E\big[\E_{\bar{Y}}[\nabla W(\bar{X}_t - \bar{Y}_t)\cdot y]\big] = 0\;.$$
	We write $\hat{U}(x) = (M/2)|x|^2$ and thus find
	$$ \nabla \hat{U}(\bar{X}_t)\cdot y = M \bar{X}_t\cdot y\;.$$
	We now pick $\alpha < 0$ in such a way that $M = \alpha(\alpha-1)$. We find
	\begin{align*}
		d\E[f(\bar{X}_t,\bar{V}_t)] &= -(1-\alpha) \E[\bar{V}_t\cdot y] - M \E[\bar{X}_t \cdot y]\\
		&= -(1-\alpha) \E[f(\bar{X}_t,\bar{V}_t)]\;.
	\end{align*}
	Now pick $y\in\R^p$ in such a way that $f(\bar{X}_0,\bar{V}_0) =1$. This yields~\eqref{Eq:expectf}. Regarding~\eqref{Eq:varf}, this follows from a direct estimation using an appropriate Lyapunov function as in~\cite[Lemma 2.1]{GLBM}, provided $L_W < M / (4M+2)$.
\end{proof}

\begin{proof}[Proof of Corollary \ref{Cor:Cutoff_kinetic_mf_application}]
With the result of Corollary \ref{Cor:Cutoff_kinetic_mf} at hand, we only need to show that $R_n$ grows at most polynomially with $n$ and that
\begin{equation}\label{Eq:lowerbdTmix}
	{\rm t}_{\textsc{mix}}^{(n)} \gg \log \log n.
\end{equation}
Recall that $R_n$ is the distance between the initial condition $(x_{i,n},v_{i,n})_{i=1,\ldots,p} \in \R^{pn}\times \R^{pn}$, and the center of mass $(x^*_n,v^*_n) \in \R^{pn}\times \R^{pn}$ defined by
$$ x^*_{i,n} =  \int x_i \pi_n(dx,dv)\;,\quad v^*_{i,n} =  \int v_i \pi_n(dx,dv)\;,\quad i\in\{1,\ldots,n\}\;.$$
Recall that $\pi_n(dx,dv)=e^{-U_n(x)} dx \otimes \gamma(v)dv$ where $\gamma$ is the $\mathcal{N}(0,1)$ density. By assumption, the Euclidean norm of the initial condition is of order $n$. To control the size of $R_n$, it suffices to control the Euclidean norm of the center of mass. By passing to the limit $t\to\infty$ in~\cite[Theorem 2.3, Eq.(2.22)]{CLRW24}, we deduce that $W_2(\pi_n,\bar{\mu}_\infty^{\otimes n})$ remains bounded uniformly over all $n$ large enough. We have
\begin{align*}
	\sum_{i=1}^n (|x^*_{i,n}|^2 + |v^*_{i,n}|^2) &\le \sum_{i=1}^n \int \big( |x_{i}|^2 + |v_{i}|^2\big) \pi_n(dx,dv)\\
	&\le W_2(\pi_n,\bar{\mu}_\infty^{\otimes n})^2 + n \int (|x|^2 + |v|^2) \bar{\mu}_\infty(dx,dv)\;,
\end{align*}
so that the Euclidean norm of the center of mass is of order at most $\sqrt{n}$, and therefore $R_n$ grows not faster than $n$.

It remains to show~\eqref{Eq:lowerbdTmix}. As in Section \ref{Sect:elliptic_meanfield}, we shall use (entropic) propagation of chaos for the mean-field dynamics, recently proved in \cite{CLRW24}, to get a lower bound on the mixing time. 

Denote by $\mu^n_t$ the law of the system, and $\mu^{n,k}_t$ the law of the $k$ first particles. Under our strong assumptions on the potential and the initial condition, \cite[Theorem 2.3]{CLRW24} applies and yields (for $t$ not too small)
$$\Ent(\mu^N_t|\bar{\mu}_t^{\otimes N}) \leq C_1e^{C_1t}(1+r^2)$$
where $C_1$ is a constant that does not depend on $n$. We used here the fact that we start all particles from the same initial data, which is also the initial data for $\bar{\mu}_t$. The upper bound stated in \cite[Theorem 2.3]{CLRW24} depends on the variance of $\bar{\mu}_t$, but this can be controlled uniformly in time by $C(R^2 +1)$ by using convergence to equilibrium for $\bar{\mu}_t$ in Wasserstein distance, which is itself a consequence of the uniform-in-$n$ contractive estimate for the $n$-particle dynamic. 

From subadditivity of entropy, we deduce that
$$\Ent(\mu^{n,k_n}_t|\bar{\mu}_t^{\otimes k_n}) \leq \left\lfloor \frac{n}{k_n} \right\rfloor^{-1}C_1e^{C_1t}(1+r^2).$$
Taking $t_n = \sqrt{\log n}$ and $k_n = n^{1/2}$, we get a bound of the form
$$\Ent(\mu^{n,k_n}_t|\bar{\mu}_t^{\otimes k_n}) \leq Cn^{-1/2}e^{C\sqrt{\log n}}(1+r^2) \longrightarrow 0.$$
Therefore, from Pinsker's inequality, we have
$$d_{TV}(\mu^{n,k_n}_{t_n}, \bar{\mu}_{t_n}^{\otimes k_n}) \longrightarrow 0.$$
Let us assume for now that the following holds
\begin{equation}\label{Eq:Claim}
	d_{TV}(\bar{\mu}_{t_n}^{\otimes k_n}, \bar{\mu}_\infty^{\otimes k_n}) \longrightarrow 1.
\end{equation}
By letting $t$ and $s$ go to infinity in \cite[Theorem 2.3]{CLRW24}, we also have
$$\Ent(\mu^n_\infty |\bar{\mu}_\infty^{\otimes n}) \leq C,$$
so that
$$d_{TV}(\mu^{n,k_n}_\infty, \bar{\mu}_\infty^{\otimes k_n}) \leq C\frac{k_n}{n} \longrightarrow 0.$$
Therefore, 
\begin{align*}
	d_{TV}(\mu^n_{t_n}, \mu^n_\infty) &\geq d_{TV}(\mu^{n,k_n}_{t_n}, \mu^{n,k_n}_\infty)\\
&\geq d_{TV}(\bar{\mu}_{t_n}^{\otimes k_n}, \bar{\mu}_\infty^{\otimes k_n}) - d_{TV}(\bar{\mu}_{\infty}^{\otimes k_n}, \mu^{n,k_n}_\infty) - d_{TV}(\mu^{n,k_n}_{t_n}, \bar{\mu}_{t_n}^{\otimes k_n}) \longrightarrow 1.
\end{align*}
Hence 
$${\rm t}_{\textsc{mix}}^{(n)}  \geq t_n \gg \log \log n$$
which concludes the proof.\\
It remains to prove~\eqref{Eq:Claim}. Using the comparison between TV and Hellinger presented at the end of the proof of Theorem \ref{thm_cutoff_mean_field}, we deduce that
\begin{align*}
	d_{TV}(\bar{\mu}_{t_n}^{\otimes k_n}, \bar{\mu}_\infty^{\otimes k_n}) &\ge 1 - (1-\He(\bar{\mu}_t , \bar{\pi})^2)^{k_n}\\
	&\ge 1 - (1-d_{TV}(\bar{\mu}_{t_n}, \bar{\mu}_\infty)^2)^{k_n}\\
	&\ge 1 - e^{-k_n d_{TV}(\bar{\mu}_{t_n}, \bar{\mu}_\infty)^2}\;.
\end{align*}
Plugging the lower bound~\eqref{Eq:ToProveClaim} and using the explicit values $k_n = \sqrt{n}$ and $t_n = \sqrt{\log n}$, we deduce that this quantity goes to $1$.
\end{proof}

\begin{remark} \label{rem_non_quadratic_confinement}
In Corollary \ref{Cor:Cutoff_kinetic_mf_application}, we assumed that the confinement potential is quadratic only to get a lower bound on the mixing time. This assumption can be relaxed, at the price of having to derive information on an eigenfunction of the linearized evolution equation, as we shall now explain. 

As above let  $\bar{\mu}_\infty$ be the stationary measure of the non linear evolution and $\nu_t=\bar{\mu}_t-\bar{\mu}\infty$. Denote $L_\mu$ the generator of the kinetic McKean-Vlasov SDE where the interaction is with fixed $\mu$. Since $\mu\mapsto L_\mu$ is affine, every smooth $f$ satisfies the exact identity
$$\frac{\mathrm{d}}{\mathrm{d}t}\int f\,\mathrm{d}\nu_t=\int\mathcal{L}f\,\mathrm{d}\nu_t-Q_f(\nu_t).$$
Here
$$\mathcal{L}f(y,w)=L_{\bar{\mu}_\infty}f(y,w)-\int\nabla W(x-y)\cdot\nabla_v f(x,v)\,\bar{\mu}_\infty(\mathrm{d}x,\mathrm{d}v)$$
is the linearized generator, and
$$Q_f(\nu)=\iint\nabla W(x-y)\cdot\nabla_v f(x,v)\,\nu(\mathrm{d}x,\mathrm{d}v)\,\nu(\mathrm{d}y,\mathrm{d}w)$$
is quadratic, with $|Q_f(\nu)|\leq M_f\,W_1(\mu,\bar{\mu}_\infty)^2$ by a coupling argument.
Let $\psi$ be an eigenfunction of $\mathcal{L}$, i.e.\ $\mathcal{L}\psi=-c\,\psi$ up to an additive constant, with $c>0$ (whose existence is expected when $\hat{U}$ is sufficiently confining), and assume it is Lipschitz. Then
$$u(t):=\mathbb{E}\,\psi(X_t,V_t)-\int\psi\,\mathrm{d}\bar{\mu}_\infty$$
satisfies $u'=-cu-Q_\psi(\nu_t)$.
If $W_1(\bar{\mu}_t,\bar{\mu}_\infty)\leq C_0e^{-\rho t}\varepsilon$, with $\varepsilon=W_1(\bar{\mu}_0,\bar{\mu}_\infty)$ and $\rho>c/2$ (which is implied by $W_2$ contraction), Duhamel's formula gives
$u(t)\geq e^{-ct}\big(u(0)-C\varepsilon^2\big)$.
Hence $u(t)\geq\frac{\eta\varepsilon}{2}e^{-ct}$ whenever $u(0)\geq\eta\varepsilon$ and $\varepsilon$ is small enough.

This argument includes the situation of Corollary \ref{Cor:Cutoff_kinetic_mf_application}: if $\hat{U}$ is quadratic, then $\psi$ can be taken as linear in $(x,v)$; since $\nabla W$ is odd, $Q_\psi\equiv0$ and the bound is exact, with no smallness required.
\end{remark}

\bibliographystyle{alpha}
\bibliography{biblio}

\end{document}